\newcommand{\kg}{{\vspace{0.2in}}}
\newcommand{\Q}{\mathbb{Q}}
\newcommand{\Z}{\mathbb{Z}}
\newcommand{\R}{\mathbb{R}}
\newcommand{\Co}{\mathbb{C}}
\newcommand{\five}{~~~~~}
\newcommand{\mm}[1]{\mathop{\mathrm{#1}}}
\newcommand{\brkt}[1]{\langle #1 \rangle}
\newcommand{\bber}[2]{B_{#1}^{(#2)}}
\newcommand{\cber}[2]{C_{#1}^{#2}}
\newcommand{\pdv}[2]{\frac{{\partial}^{#1}}{\partial {#2}^{#1}}}
\newcommand{\ctext}[1]{\raise0.2ex\hbox{\textcircled{\scriptsize{#1}}}}
\newcommand{\cy}{\textup{\foreignlanguage{russian}{\cyrsh}}}
\def\mapstofill@{%
	\arrowfill@{\mapstochar\relbar}\relbar\rightarrow}
\newcommand*\xmapsto[2][]{%
	\ext@arrow 0395\mapstofill@{#1}{#2}}
\theoremstyle{definition}
\newtheorem{theorem}{Theorem}[section]
\newtheorem{definition}[theorem]{Definition}
\newtheorem{lemma}[theorem]{Lemma}
\newtheorem{proposition}[theorem]{Proposition}
\newtheorem{corollary}[theorem]{Corollary}
\newtheorem{remark}[theorem]{Remark}
\newtheorem{example}[theorem]{Example}
\def\underbrace@#1#2{\vtop {\m@th \ialign {##\crcr $\hfil #1{#2}\hfil $\crcr \noalign {\kern 3\p@ \nointerlineskip }\upbracefill \crcr \noalign {\kern 3\p@ }}}}
\def\overbrace@#1#2{\vbox {\m@th \ialign {##\crcr \noalign {\kern 3\p@ }\downbracefill \crcr \noalign {\kern 3\p@ \nointerlineskip }$\hfil #1 {#2}\hfil $\crcr }}}
\def\underbrace#1{%
	\mathop{\mathchoice{\underbrace@{\displaystyle}{#1}}
		{\underbrace@{\textstyle}{#1}}
		{\underbrace@{\scriptstyle}{#1}}
		{\underbrace@{\scriptscriptstyle}{#1}}}\limits
}
\def\overbrace#1{%
	\mathop{\mathchoice{\overbrace@{\displaystyle}{#1}}
		{\overbrace@{\textstyle}{#1}}
		{\overbrace@{\scriptstyle}{#1}}
		{\overbrace@{\scriptscriptstyle}{#1}}}\limits
}
\begin{document}
	
	\title{On generalization of duality formulas for the Arakawa-Kaneko type zeta functions}
	\author{Kyosuke Nishibiro}
	\date{}
	\maketitle
	
	\noindent{\bf Abstract:} 
	Kaneko and Tsumura introduced the Arakawa-Kaneko type zeta function $\eta(-k_1,\ldots,-k_r;s_1,\ldots,s_r)$ for non-negative integers $k_1,\ldots,k_r$ and complex variables $s_1,\ldots,s_r$. Recently, Yamamoto showed that, by using the multiple integral expression, $\eta(u_1,\ldots,u_r;s_1,\ldots,s_r)$ can be extended to an analytic function of 2$r$ variables. Also, he showed that the function $\eta(u_1,\ldots,u_r;s_1,\ldots,s_r)$ satisfies a duality formula. In this paper, by using the a generalization of non-strict multi-indexed polylogarithm, we define a kind of the Arakawa-Kaneko type zeta function, and show that this function satisfies a certain duality formula.
	
	\noindent{\bf Keywords:} Multivariable Arakawa-Kaneko zeta function, poly-Bernoulli numbers.

	\section{Introduction}
	
	\footnote[0]{{\bf 2010 Mathematical Subject Classification:} primary:11M32; secondary:11B68.}
	
	In \cite{AK1}, Arakawa and Kaneko introduced the function
	\[
	\xi(\mathbf{k}_r;s)=\xi(k_1,\ldots,k_r;s)=\frac{1}{\Gamma(s)}\int_{0}^{\infty} t^{s-1}\frac{\mm{Li}_{\mathbf{k}_r}(1-e^{-t})}{e^t-1} dt
	\]
	for $\mathbf{k}_r=(k_1,\ldots,k_r)\in\Z_{\geq1}^r$ and $s\in\Co$ with $\mm{Re}(s)>0$, which is called the Arakawa-Kaneko zeta function. Here, 
	\[
	{\mm{Li}}_{\mathbf{k}_r}(z)={\mm{Li}}_{k_1,\ldots,k_r}(z)=\sum_{0<m_1<\cdots<m_r}^{}\frac{z^{m_r}}{m_1^{k_1}\cdots m_r^{k_r}}~(|z|<1)
	\]
	is the multiple polylogarithm. They also showed that the function $\xi(\mathbf{k}_r;s)$ can be continued analytically to the whole plane $\Co$. As a relative of $\xi(\mathbf{k}_r;s)$, in \cite{KT1}, Kaneko and Tsumura introduced the function
	\[
	\eta(\mathbf{k}_r;s)=\eta(k_1,\ldots,k_r;s)=\frac{1}{\Gamma(s)}\int_{0}^{\infty} t^{s-1}\frac{\mm{Li}_{\mathbf{k}_r}(1-e^{t})}{1-e^t} dt
	\]
	for $s\in\Co$ with $\mm{Re}(s)>0$, $\mathbf{k}_r\in\Z_{\geq1}^r$ or $\mathbf{k}_r\in\Z_{\leq0}^r$, and showed that the function $\eta(\mathbf{k}_r;s)$ can be continued analytically to the whole plane $\Co$. The function $\eta(\mathbf{k}_r;s)$ is considered to be a twin sibling of the function $\xi(\mathbf{k}_r;s)$. It is shown that the values of these functions at non-positive integers can be expressed in terms of the multi-poly-Bernoulli numbers 
	\begin{align*}
		\frac{\mm{Li}_{\mathbf{k}_r}(1-e^{-t})}{1-e^{-t}}&=\sum_{m=0}^{\infty}B_{m}^{(\mathbf{k}_r)}\frac{t^m}{m!},\\
		\frac{\mm{Li}_{\mathbf{k}_r}(1-e^{-t})}{e^t-1}&=\sum_{m=0}^{\infty}C_{m}^{(\mathbf{k}_r)}\frac{t^m}{m!},
	\end{align*}
	and that the values of these functions at positive integers are closely related to the multiple zeta values
	\begin{align*}
		\zeta(\mathbf{k}_r)=\zeta(k_1,\ldots,k_r)=\sum_{0<m_1<\cdots<m_r}^{}\frac{1}{m_1^{k_1}\cdots m_r^{k_r}}
	\end{align*}
	for $\mathbf{k}_r\in\Z_{\geq1}^r$ with $k_r\geq2$ (for details, see \cite{AK1, KT1}). 
	
	Analogously, for $\mathbf{u}_r=(u_1,\ldots,u_r)\in\Co^r$ and $d\in\{1,\ldots,r\}$, Kaneko and Tsumura introduced the multi-indexed poly-Bernoulli numbers by the generating function
	\begin{align}\label{mulbber}
		\frac{{\mm{Li}}_{\mathbf{u}_r}^{\textup{\foreignlanguage{russian}{\cyrsh}}}(1-e^{-t_1-\cdots-t_r},\ldots,1-e^{-t_r})}{(1-e^{-t_1-\cdots-t_r})\cdots(1-e^{-t_d-\cdots-t_r})}=\sum_{m_1,\ldots,m_r=0}^{\infty} B_{\mathbf{m}_r}^{(\mathbf{u}_r),(d)}\frac{t_1^{m_1}}{m_1!}\cdots\frac{t_r^{m_r}}{m_r!},
	\end{align}
	and defined the related Arakawa-Kaneko type zeta function for $-\mathbf{k}_r=(-k_1,\ldots,-k_r)\in\Z_{\geq0}^r$ by
	\begin{align}\label{kteta}
		\eta(-\mathbf{k}_r;\mathbf{s}_r)=\prod_{j=1}^{r}\frac{1}{\Gamma(s_j)}\int_{0}^{\infty} t_1^{s_1-1}\cdots t_r^{s_r-1}\frac{\mm{Li}_{-\mathbf{k}_r}^{\cy}(1-e^{t_1+\cdots+t_r},\ldots,1-e^{t_r})}{(1-e^{t_1+\cdots+t_r})\cdots(1-e^{t_r})} dt_1\cdots dt_r.
	\end{align}
	Here, for $\mathbf{u}_r, \mathbf{z}_r=(z_1,\ldots,z_r)\in\Co^r$ with $|z_j|<1~(j=1,\ldots,r)$,
	\[
	{\mm{Li}}_{\mathbf{u}_r}^{\cy}(\mathbf{z}_r)={\mm{Li}}_{u_1,\ldots,u_r}^{\cy}(z_1,\ldots,z_r)=\sum_{l_1,\ldots,l_r=1}^{\infty}\frac{z_1^{l_1}\cdots z_r^{l_r}}{l_1^{u_1}\cdots(l_1+\cdots+l_r)^{u_r}}
	\]
	is the multiple polylogarithm of $\cy$-type. In \cite{KT1}, Kaneko and Tsumura showed that the duality formula
	\[
	\bber{\mathbf{n}_r}{-\mathbf{k}_r),(r}=\bber{\mathbf{k}_r}{-\mathbf{n}_r),(r}~~(\mathbf{k}_r, \mathbf{n}_r\in\Z_{\geq0}^r)
	\]
	holds, which is a generalization of that for poly-Bernoulli numbers $\bber{n}{-k}$. Recently, in \cite{Ya}, Yamamoto showed that the function $\eta(-\mathbf{k}_r;\mathbf{s}_r)$ can be extended to an analytic function of $2r$ variables $\eta(\mathbf{u}_r;\mathbf{s}_r)$. Also, in \cite{Ya}, he conjectured that
	\[
	\eta(\mathbf{k}_r;\mathbf{n}_r)\in\mathcal{Z}
	\]
	holds for $\mathbf{k}_r, \mathbf{n}_r\in\Z_{\geq1}$. Here,
	\[
	\mathcal{Z}=\brkt{~\zeta(\mathbf{k}_r)~|~\mathbf{k}_r\in\Z_{\geq1}^r, k_r\geq2~}_{\Q}
	\] 
	is the $\Q$-linear space spanned by all multiple zeta values for admissible indices $\mathbf{k}_r$. This conjecture was solved by Brown \cite{Bro}, and later by Ito and Sato \cite{It3}.
	In particular, Ito and Sato showed that
	\[
	\eta(\mathbf{k}_r;\mathbf{n}_r)\in\mathcal{Z}_{k_1+\cdots+k_r+n_1+\cdots+n_r}
	\]
	holds. Here, for $k\in\Z_{\geq0}$, 
	\[
	\mathcal{Z}_{k}=\brkt{~\zeta(\mathbf{k}_r)~|~\mathbf{k}_r\in\Z_{\geq1}^r, k_r\geq2, k_1+\cdots+k_r=k~}_{\Q}
	\]
	is the $\Q$-linear subspace of $\mathcal{Z}$ spanned by all multiple zeta values for admissible indices of weight $k$. As an analogue of ${\mm{Li}}_{\mathbf{u}_r}^{\cy}(\mathbf{z}_r)$, Baba, Nakasuji and Sakata introduced the multiple polylogarithm ${\mm{Li}}_{\mathbf{u}_r}^{\cy,\star}(\mathbf{z}_r)$ and multi-indexed poly-Bernoulli numbers $\mathbb{B}_{\mathbf{n}_r}^{\star,(\mathbf{u}_r)}$ as follows.
	
	\begin{definition}[{\cite{BS}}]
		For $\mathbf{u}_r, \mathbf{z}_r\in\Co^r$ with $|z_j|<1~(j=1,\ldots,r)$, the multiple polylogarithm ${\mm{Li}}_{\mathbf{u}_r}^{\cy,\star}(\mathbf{z}_r)$ and multi-indexed poly-Bernoulli numbers $\mathbb{B}_{\mathbf{n}_r}^{\star,(\mathbf{u}_r)}$ are defined by 
		\begin{align*}
			{\mm{Li}}_{\mathbf{u}_r}^{\cy,\star}(\mathbf{z}_r)&=\sum_{l_1=1,l_2,\ldots,l_r=0}^{\infty}\frac{z_1^{l_1}\cdots z_r^{l_r}}{l_1^{u_1}\cdots(l_1+\cdots+l_r)^{u_r}},\\
			\frac{{\mm{Li}}_{\mathbf{u}_r}^{\textup{\foreignlanguage{russian}{\cyrsh}},\star}(1-e^{-t_1-\cdots-t_r},\ldots,1-e^{-t_r})}{(1-e^{-t_1-\cdots-t_r})\cdots(1-e^{-t_r})}&=\sum_{m_1,\ldots,m_r=0}^{\infty} \mathbb{B}_{\mathbf{m}_r}^{\star,(\mathbf{u}_r)}\frac{t_1^{m_1}}{m_1!}\cdots\frac{t_r^{m_r}}{m_r!},
		\end{align*}
		respectively.
	\end{definition}
	Note that ${\mm{Li}}_{\mathbf{u}_r}^{\cy,\star}(\mathbf{z}_r)$ is a $\cy$-type analogue of the non-strict multiple polylogarithm
	\[
	{\mm{Li}}_{\mathbf{k}_r}^{\star}(z)=\sum_{0<m_1\leq\cdots\leq m_r}^{}\frac{z^{m_r}}{m_1^{k_1}\cdots m_r^{k_r}}~(|z|<1),
	\]
	since
	\[
	{\mm{Li}}_{\mathbf{k}_r}^{\star}(z)={\mm{Li}}_{\mathbf{k}_r}^{\cy,\star}(z,\ldots,z)
	\]
	holds. In \cite{BS}, Baba, Nakasuji and Sakata showed various relations among $\mathbb{B}_{\mathbf{n}_r}^{\star,(\mathbf{u}_r)}$. Inspired by their results, we consider the following polylogarithm, Bernoulli numbers and Arakawa-Kaneko type $\eta$ function.
	\begin{definition}
		For $\mathbf{u}_r, \mathbf{z}_r\in\Co^r$ with $|z_j|<1~(j=1,\ldots,r)$ and $\mathbf{a}_r=(a_1,\ldots,a_r)\in\Z_{\geq0}^r$ with $a_1\geq1$, 
		we define ${\mm{Li}}_{\mathbf{u}_r}^{\textup{\foreignlanguage{russian}{\cyrsh}}}(\mathbf{z}_r;\mathbf{a}_r)$ by
		\begin{align}\label{deffivepoly}
			{\mm{Li}}_{\mathbf{u}_r}^{\textup{\foreignlanguage{russian}{\cyrsh}}}(\mathbf{z}_r;\mathbf{a}_r)=\sum_{\substack{l_j=a_j\\j=1,\ldots,r}}^{\infty}\frac{z_1^{l_1}\cdots z_r^{l_r}}{l_1^{u_1}\cdots(l_1+\cdots+l_r)^{u_r}}.
		\end{align}
		Also, for $\mathbf{k}_r\in\Z^r, \mathbf{n}_r\in\Z_{\geq0}^r, \sigma\in\mathfrak{S}_r, \mathbf{a}_r\in\Z_{\geq0}^r$ with 
		$a_1,a_{\sigma^{-1}(1)}\geq1$ and $\mathbf{b}_r=(b_1,\ldots,b_r)\in\Z_{\geq0}^r$ with $b_1,b_{\sigma^{-1}(1)}\geq1$, we define $B_{\mathbf{n}_r}^{(-\mathbf{k}_r)}(\sigma;\mathbf{a}_r;\mathbf{b}_r)$ by
		\begin{align}\label{deffiveber}
			\begin{split}
				&\prod_{j=1}^{r}e^{(a_j-1)(t_j+\cdots+t_r)}\frac{{\mm{Li}}_{\mathbf{k}_r}^{\cy}(1-e^{-t_{\sigma(1)}-t_{\sigma(1)+1}-\cdots-t_r},\ldots,1-e^{-t_{\sigma(r)}-t_{\sigma(r)+1}-\cdots-t_r};\mathbf{b}_r)}{(1-e^{-t_{\sigma(1)}-
						t_{\sigma(1)+1}-\cdots-t_r})^{b_1}\cdots(1-e^{-t_{\sigma(r)}-t_{\sigma(r)+1}-\cdots-t_r})^{b_r}}\\
				=&\sum_{m_1,\ldots,m_r=0}^{\infty} B_{\mathbf{m}_r}^{(\mathbf{k}_r)}(\sigma;\mathbf{a}_r;\mathbf{b}_r)\frac{t_1^{m_1}}{m_1!}\cdots\frac{t_r^{m_r}}{m_r!}.
			\end{split}
		\end{align}
		Furthermore, for $\mathbf{u}_r, \mathbf{s}_r\in\Co^r$ with $\mm{Re}(u_j), \mm{Re}(s_j)>0$, $ \sigma\in\mathfrak{S}_r, \mathbf{a}_r\in\Z_{\geq0}^r$ with 
		$a_1,a_{\sigma^{-1}(1)}\geq1$ and $\mathbf{b}_r\in\Z_{\geq0}^r$ with $b_1,b_{\sigma^{-1}(1)}\geq1$ and $a_{\sigma(j)}+b_j\geq1$, we define $\eta(\mathbf{u}_r;\mathbf{s}_r;\sigma;\mathbf{a}_r;\mathbf{b}_r)$ by
		\begin{align}\label{intfiveeta}
			\begin{split}
				&\eta(\mathbf{u}_r;\mathbf{s}_r;\sigma;\mathbf{a}_r;\mathbf{b}_r)\\
				=&\prod_{j=1}^{r}\frac{1}{\Gamma(s_j)}\int_{(0,\infty)^r}^{}\prod_{j=1}^{r}t_j^{s_j-1}e^{(1-a_j)(t_j+\cdots+t_r)}
				\frac{{\mm{Li}}_{\mathbf{u}_r}^{\cy}(1-e^{t_{\sigma(1)}+t_{\sigma(1)+1}+\cdots+t_r},\ldots,1-e^{t_{\sigma(r)}+t_{\sigma(r)+1}+\cdots+t_r};\mathbf{b}_r)}{(1-e^{t_{\sigma(1)}+t_{\sigma(1)+1}+\cdots+t_r})^{b_1}\cdots(1-e^{t_{\sigma(r)}+t_{\sigma(r)+1}+\cdots+t_r})^{b_r}}
				\prod_{j=1}^{r}dt_j.
			\end{split}
		\end{align}
	\end{definition} 
	The Bernoulli numbers $B_{\mathbf{n}_r}^{(-\mathbf{k}_r)}(\sigma;\mathbf{a}_r;\mathbf{b}_r)$ are one of the generalizations of $B_{\mathbf{n}_r}^{(-\mathbf{k}_r),(r)}$. Also, the Arakawa-Kaneko type $\eta$ function $\eta(\mathbf{u}_r;\mathbf{s}_r;\sigma;\mathbf{a}_r;\mathbf{b}_r)$ are one of the generalizations of $\eta(\mathbf{u}_r;\mathbf{s}_r)$ (for details, see Section 4). The aim of this paper is to show that the function $\eta(\mathbf{u}_r;\mathbf{s}_r;\sigma;\mathbf{a}_r;\mathbf{b}_r)$ satisfies a certain duality formula. Also, we show that we can write the special values of $\eta(\mathbf{u}_r;\mathbf{s}_r;\sigma;\mathbf{a}_r;\mathbf{b}_r)$ at positive integers 
	in terms of multiple zeta values, and at non-positive integers in terms of $B_{\mathbf{n}_r}^{(-\mathbf{k}_r)}(\sigma;\mathbf{a}_r;\mathbf{b}_r)$. 
	
	The paper is organized as follows. In Section 2, we review some notations and the known results. In Section 3, we show analytic continuations of ${\mm{Li}}_{\mathbf{u}_r}^{\textup{\foreignlanguage{russian}{\cyrsh}}}(\mathbf{z}_r;\mathbf{a}_r)$ and $\eta(\mathbf{u}_r;\mathbf{s}_r;\sigma;\mathbf{a}_r;\mathbf{b}_r)$. Also, we show that the function $\eta(\mathbf{u}_r;\mathbf{s}_r;\sigma;\mathbf{a}_r;\mathbf{b}_r)$ satisfies a certain duality formula.
	In Section 4, we consider some special values of $\eta(\mathbf{u}_r;\mathbf{s}_r;\sigma;\mathbf{a}_r;\mathbf{b}_r)$.
	
	\section{Preliminaries}
	
	In this section, we recall some notations and the known results. First of all, we review the  properties $\eta(\mathbf{u}_r;\mathbf{s}_r)$. As we state in Section 1, the function $\eta(\mathbf{k}_r;s)$ is first defined by Kaneko and Tsumura \cite{KT1}. In the article, they gave the following conjecture, which was already proved.
	
	\begin{theorem}[{cf. \cite[p. 37]{KT1}}]\label{etaposi}
		For $k, n\in\Z_{\geq1}$,
		\begin{align}\label{oneetadual}
			\eta(k;n)=\eta(n;k)
		\end{align}
		holds.
	\end{theorem}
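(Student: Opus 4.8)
The strategy is to rewrite $\eta(k;n)$ as a double integral that is manifestly symmetric under the exchange of the two indices.

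First I would recall the classical integral representation of the polylogarithm: expanding $1/(1-zw)$ as a geometric series and using the Gamma integral $\int_0^1 w^{m}(-\log w)^{k-1}\,dw=(k-1)!\,(m+1)^{-k}$ one gets, for $|z|<1$,
\[
\mm{Li}_k(z)=\frac{z}{\Gamma(k)}\int_0^1\frac{(-\log w)^{k-1}}{1-zw}\,dw .
\]
Since the right-hand side is holomorphic on $\Co\setminus[1,\infty)$ (the factor $1-zw$ vanishes for some $w\in[0,1]$ only when $z\in[1,\infty)$), this identity extends to that whole region; in particular it holds at $z=1-e^{t}\in(-\infty,0)$ for every $t>0$. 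Substituting $w=e^{-u}$ and $z=1-e^{t}$ and simplifying then gives
\[
\frac{\mm{Li}_k(1-e^{t})}{1-e^{t}}=\frac{1}{\Gamma(k)}\int_0^{\infty}\frac{u^{k-1}}{e^{t}+e^{u}-1}\,du\qquad(t>0),
\]
whose integrand is positive and bounded because $e^{t}+e^{u}-1>1$ on $(0,\infty)^2$.

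Next I would insert this into the defining integral $\eta(k;n)=\frac{1}{\Gamma(n)}\int_0^\infty t^{n-1}\,\frac{\mm{Li}_k(1-e^{t})}{1-e^{t}}\,dt$ and interchange the two integrations, obtaining
\[
\eta(k;n)=\frac{1}{\Gamma(n)\Gamma(k)}\int_0^{\infty}\!\!\int_0^{\infty}\frac{t^{n-1}u^{k-1}}{e^{t}+e^{u}-1}\,dt\,du .
\]
The right-hand side is invariant under the simultaneous exchange $t\leftrightarrow u$, $k\leftrightarrow n$, and after that exchange it is precisely $\eta(n;k)$, which yields \eqref{oneetadual}.

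The heart of the argument is the passage to this symmetric double integral; the remaining points are routine. One must justify the use of the polylogarithm formula at the nonstandard value $z=1-e^{t}$, which is exactly the analytic-continuation remark above, and one must justify the Fubini interchange, which is legitimate because on $(0,\infty)^2$ the integrand is non-negative and dominated by $t^{n-1}u^{k-1}e^{-\max(t,u)}$ (using $e^{t}+e^{u}-1\ge e^{\max(t,u)}$), a function that is $O(t^{n-1}u^{k-1})$ near the origin and decays exponentially at infinity, hence lies in $L^{1}((0,\infty)^2)$; the same bound also shows that the original one-dimensional integral defining $\eta(k;n)$ converges. I do not expect any obstacle beyond bookkeeping these convergence and domain-of-validity issues.
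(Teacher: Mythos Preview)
Your proof is correct and follows essentially the same route as the paper (and Yamamoto \cite{Ya}): one rewrites $\eta(k;n)$ as the manifestly symmetric double integral $\frac{1}{\Gamma(k)\Gamma(n)}\int_0^\infty\!\int_0^\infty \frac{t^{n-1}u^{k-1}}{e^t+e^u-1}\,dt\,du$ and reads off the duality. The only difference is that the paper carries this out with contour integrals over $\mathcal{C}_\varepsilon$ (as in \eqref{analyyameta} and Proposition~\ref{propcontourinteta}) so as to obtain the identity for all complex $(u,s)$, whereas you work with real integrals, which is entirely sufficient for the positive-integer case stated in Theorem~\ref{etaposi}.
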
 
	
	This theorem was first proved by Yamamoto \cite{Ya}, who showed the more general case. He showed the following results.
	\begin{theorem}[{\cite[Lemma 2.1]{Ya}}]
		For $\mathbf{u}_r, \mathbf{z}_r\in\Co^r$ with $|z_j|<1~(j=1,\ldots,r)$ and sufficiently small $\varepsilon\in\R_{>0}$, the multiple polylogarithm of $\cy$-type
		\[
		{\mm{Li}}_{\mathbf{u}_r}^{\cy}(\mathbf{z}_r)=\sum_{l_1,\ldots,l_r=1}^{\infty}\frac{z_1^{l_1}\cdots z_r^{l_r}}{l_1^{u_1}\cdots(l_1+\cdots+l_r)^{u_r}}
		\]
		has the integral expression
		\begin{align}\label{analy}
			{\mm{Li}}_{\mathbf{u}_r}^{\cy}(\mathbf{z}_r)=\prod_{j=1}^{r}\frac{\Gamma(1-u_j)}{2\pi ie^{\pi iu_j}}\int_{(\mathcal{C}_\varepsilon)^r}^{} \prod_{j=1}^{r}\frac{x_j^{u_j-1}z_j}{e^{x_j+\cdots+x_r}-z_j}dx_j.
		\end{align}
		Here, $\mathcal{C}_\varepsilon$ denotes the contour which goes from $+\infty$ to $\varepsilon$ along the real axis, goes round counterclockwise along the circle around the origin of radius $\varepsilon$~$($let $C(0;\varepsilon)$ be this circle$)$, and then goes back to $+\infty$ along the real axis. By \eqref{analy}, ${\mm{Li}}_{\mathbf{u}_r}^{\cy}(\mathbf{z}_r)$ can be continued analytically to the region
		\[
		(\mathbf{u}_r, \mathbf{z}_r)\in\Co^r\times(\Co\setminus\R_{\geq1})^r.
		\]
	\end{theorem}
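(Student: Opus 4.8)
The plan is to reduce the statement to the classical Hankel‑contour representation of the ordinary polylogarithm, applied one variable at a time, and then to read off the continuation from convergence of the resulting contour integral.

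\textbf{Step 1: a real integral representation.} For $\mm{Re}(u_j)>0$ and $|z_j|<1$ I would begin from the elementary identity $(l_1+\cdots+l_j)^{-u_j}=\Gamma(u_j)^{-1}\int_0^\infty x_j^{u_j-1}e^{-(l_1+\cdots+l_j)x_j}\,dx_j$, insert it for each $j=1,\dots,r$ in the defining series of ${\mm{Li}}_{\mathbf{u}_r}^{\cy}(\mathbf{z}_r)$, use the rearrangement $\sum_{j=1}^{r}(l_1+\cdots+l_j)x_j=\sum_{j=1}^{r}l_j(x_j+x_{j+1}+\cdots+x_r)$ together with Fubini (justified by absolute convergence), and sum the geometric series $\sum_{l_j\geq1}z_j^{l_j}e^{-l_j(x_j+\cdots+x_r)}=z_j/(e^{x_j+\cdots+x_r}-z_j)$. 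This produces
\[
{\mm{Li}}_{\mathbf{u}_r}^{\cy}(\mathbf{z}_r)=\prod_{j=1}^{r}\frac{1}{\Gamma(u_j)}\int_{(0,\infty)^r}\prod_{j=1}^{r}\frac{x_j^{u_j-1}z_j}{e^{x_j+\cdots+x_r}-z_j}\,dx_j
\]
on the same region; note each factor $z_j/(e^{x_j+\cdots+x_r}-z_j)$ is holomorphic in $x_j$ near $x_j=0$ since $x_{j+1}+\cdots+x_r\geq0$ and $|z_j|<1$.

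\textbf{Step 2: conversion to Hankel contours.} I would then convert the $r$ real integrals into contour integrals over $\mathcal{C}_\varepsilon$ one variable at a time, in the order $x_1,x_2,\dots,x_r$. The one‑variable input is: if $h$ is holomorphic and single‑valued in a neighbourhood of $[\varepsilon,\infty)\cup\{|x|\leq\varepsilon\}$, bounded near $0$, and decays at $\infty$, then evaluating the branch $x^{u-1}$ (with $\arg x\in[0,2\pi]$) on the two rays gives $\int_{\mathcal{C}_\varepsilon}x^{u-1}h(x)\,dx=(e^{2\pi iu}-1)\int_\varepsilon^\infty t^{u-1}h(t)\,dt+\int_{C(0;\varepsilon)}x^{u-1}h(x)\,dx$, where the last term is $O(\varepsilon^{\mm{Re}(u)})\to0$ when $\mm{Re}(u)>0$; combining with the reflection formula $\Gamma(u)\Gamma(1-u)=\pi/\sin(\pi u)$ rewrites $\Gamma(u)^{-1}(e^{2\pi iu}-1)^{-1}=\Gamma(1-u)/(2\pi ie^{\pi iu})$. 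Applying this with $x=x_j$ and $h$ the product of the factors depending on $x_j$ (the $j$-th through first denominators, with $x_1,\dots,x_{j-1}$ already on $\mathcal{C}_\varepsilon$ and $x_{j+1},\dots,x_r$ still positive reals) turns $\prod_j\Gamma(u_j)^{-1}$ into $\prod_j\Gamma(1-u_j)/(2\pi ie^{\pi iu_j})$ and yields exactly \eqref{analy}.

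\textbf{Step 3: holomorphy and analytic continuation.} The hypothesis ``$\varepsilon$ sufficiently small'' enters here: I would check that for $\varepsilon$ small (depending only on $\min_j\log(1/|z_j|)$) the poles of each denominator, as a function of the variable being converted, have real part $\leq\log|z_j|+r\varepsilon<-\varepsilon$, hence lie outside a neighbourhood of $[\varepsilon,\infty)\cup\{|x|\leq\varepsilon\}$, so $h$ has the required holomorphy at each stage; independence of the particular small $\varepsilon$ follows from Cauchy's theorem. For the continuation I would then observe that the contour integral in \eqref{analy} converges absolutely and locally uniformly, and is holomorphic, on all of $\Co^r\times(\Co\setminus\R_{\geq1})^r$: on $\mathcal{C}_\varepsilon$ the factors $x_j^{u_j-1}$ grow only polynomially (there is no singularity at $0$, the contour staying at distance $\varepsilon$), each denominator $e^{x_j+\cdots+x_r}-z_j$ stays bounded away from $0$, and the integrand decays exponentially as the variables go to $\infty$; moreover the set $\{e^{x_j+\cdots+x_r}:x_i\in\mathcal{C}_\varepsilon\}$ shrinks to $\R_{\geq1}$ as $\varepsilon\to0$, so for any compact $K\subset(\Co\setminus\R_{\geq1})^r$ one can choose $\varepsilon$ small enough that no pole is met for $\mathbf{z}_r\in K$. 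The identity theorem then shows \eqref{analy} is the asserted analytic continuation.

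\textbf{Main obstacle.} The technical heart is the bookkeeping in Steps 2--3: verifying, uniformly in the already‑fixed contour variables, that the partially converted integrand has no poles on or near the Hankel contour (so that the one‑variable conversion applies at each successive stage), and checking that the final contour integral is holomorphic on precisely $\Co^r\times(\Co\setminus\R_{\geq1})^r$; the interchanges of summation and integration and of the order of the $x_j$-integrations are routine consequences of absolute convergence.
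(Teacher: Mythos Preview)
Your proposal is correct and follows the same strategy as the paper's proof of the generalization (Lemma~\ref{symfivelemma}, which the paper says is ``similar to that in \cite[Lemma 2.1]{Ya}''): derive the real integral from the series for $\mm{Re}(u_j)$ large, identify it with the Hankel contour integral, and read off the continuation from absolute convergence of the latter. The only organizational difference is that you convert to Hankel contours one variable at a time, whereas the paper decomposes the full $(\mathcal{C}_\varepsilon)^r$-integral at once into $2^r$ pieces (each $I_j$ equal to $C(0;\varepsilon)$ or $(\varepsilon,\infty)$) and shows every mixed piece is $O\bigl(\prod_{I_j=C(0;\varepsilon)}\varepsilon^{\mm{Re}(u_j)}\bigr)\to0$ via the uniform bound $|1/(e^x-z)|\ll e^{-\frac12\mm{Re}(x)}$; this simultaneous decomposition sidesteps the inductive pole-tracking you identify as the main obstacle, but the underlying content is identical.
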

	
	\begin{theorem}[{\cite[Definition 2.3, Theorem 2.5]{Ya}}]
		For $\mathbf{u}_r, \mathbf{s}_r\in\Co^r$ with $\mm{Re}(s_j)>0$, the function
		\begin{align}\label{yamamotoeta}
			\eta(\mathbf{u}_r;\mathbf{s}_r)=\prod_{j=1}^{r}\frac{1}{\Gamma(s_j)}\int_{0}^{\infty} t_1^{s_1-1}\cdots t_r^{s_r-1}\frac{{\mm{Li}}_{\mathbf{u}_r}^{\cy}(1-e^{t_1+\cdots+t_r},\ldots,1-e^{t_r})}{(1-e^{t_1+\cdots+t_r})\cdots(1-e^{t_r})}dt_1\cdots dt_r
		\end{align}
		is defined and has the integral expression
		\begin{align}\label{analyyameta}
			\eta(\mathbf{u}_r;\mathbf{s}_r)=\prod_{j=1}^{r}\frac{\Gamma(1-u_j)\Gamma(1-s_j)}{(2\pi i)^2e^{\pi i(u_j+s_j)}}\int_{(\mathcal{C}_{\varepsilon})^{2r}}^{} \prod_{j=1}^{r}
			\frac{x_j^{u_j-1}t_j^{s_j-1}}{e^{x_j+\cdots+x_r}+e^{t_j+\cdots+t_r}-1}dx_jdt_j.
		\end{align}
		By \eqref{analyyameta}, $\eta(\mathbf{u}_r;\mathbf{s}_r)$ can be continued analytically to the region
		\[
		(\mathbf{u}_r,\mathbf{s}_r)\in\Co^{2r}.
		\]
		Furthermore, $\eta(\mathbf{u}_r;\mathbf{s}_r)$ satisfies the duality formula
		\[
		\eta(\mathbf{u}_r;\mathbf{s}_r)=\eta(\mathbf{s}_r;\mathbf{u}_r).
		\]
	\end{theorem}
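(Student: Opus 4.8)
The plan is to establish the contour–integral representation \eqref{analyyameta} directly from the definition \eqref{yamamotoeta}, by feeding in the contour expression \eqref{analy} for the $\cy$-polylogarithm and then converting the half-line integrals over $t_1,\dots,t_r$ into Hankel-type loop integrals; once \eqref{analyyameta} is in hand, both the continuation to $\Co^{2r}$ and the duality $\eta(\mathbf{u}_r;\mathbf{s}_r)=\eta(\mathbf{s}_r;\mathbf{u}_r)$ are essentially read off from the shape of the right-hand side.

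Concretely, I would first note that, writing $z_j=1-e^{t_j+\cdots+t_r}$, one has $e^{x_j+\cdots+x_r}-z_j=e^{x_j+\cdots+x_r}+e^{t_j+\cdots+t_r}-1$ and $z_1\cdots z_r=(1-e^{t_1+\cdots+t_r})\cdots(1-e^{t_r})$, so that \eqref{analy} yields, for $t_1,\dots,t_r>0$,
\[
\frac{{\mm{Li}}_{\mathbf{u}_r}^{\cy}(1-e^{t_1+\cdots+t_r},\ldots,1-e^{t_r})}{(1-e^{t_1+\cdots+t_r})\cdots(1-e^{t_r})}=\prod_{j=1}^{r}\frac{\Gamma(1-u_j)}{2\pi ie^{\pi iu_j}}\int_{(\mathcal{C}_\varepsilon)^r}\prod_{j=1}^{r}\frac{x_j^{u_j-1}}{e^{x_j+\cdots+x_r}+e^{t_j+\cdots+t_r}-1}\,dx_j .
\]
Substituting this into \eqref{yamamotoeta} and interchanging the orders of integration (Fubini, justified by the estimates discussed below), it remains to turn each $\frac{1}{\Gamma(s_j)}\int_0^\infty t_j^{s_j-1}(\cdots)\,dt_j$ into a loop integral. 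For this I use the elementary identity: if $g$ is holomorphic in a neighbourhood of $\R_{\geq0}$ and decays suitably at $+\infty$, then for $\mathrm{Re}(s)>0$
\[
\frac{1}{\Gamma(s)}\int_{0}^{\infty}t^{s-1}g(t)\,dt=\frac{\Gamma(1-s)}{2\pi ie^{\pi is}}\int_{\mathcal{C}_\varepsilon}t^{s-1}g(t)\,dt,
\]
which follows by collapsing $\mathcal{C}_\varepsilon$ onto $\R_{\geq0}$, noting that the monodromy of $t^{s-1}$ produces the factor $e^{2\pi is}-1$, and using $\Gamma(s)\Gamma(1-s)=\pi/\sin(\pi s)$. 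Applying this successively in $t_r,t_{r-1},\dots,t_1$ — legitimate because the integrand is holomorphic in each $t_j$ on a neighbourhood of $\mathcal{C}_\varepsilon$ together with the region swept during the collapse, the denominators $e^{x_j+\cdots+x_r}+e^{t_j+\cdots+t_r}-1$ being bounded away from $0$ there for $\varepsilon$ small and $x$'s on $(\mathcal{C}_\varepsilon)^r$ — produces exactly \eqref{analyyameta}.

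For the analytic continuation, observe that the $2r$-fold loop integral in \eqref{analyyameta} converges locally uniformly for all $(\mathbf{u}_r,\mathbf{s}_r)\in\Co^{2r}$: on $\mathcal{C}_\varepsilon$ the powers $x_j^{u_j-1}$, $t_j^{s_j-1}$ stay bounded near the origin and grow at most polynomially on the rays, while the product of denominators decays exponentially whenever any coordinate tends to $+\infty$; hence that integral is entire in $(\mathbf{u}_r,\mathbf{s}_r)$. The prefactor $\prod_j\Gamma(1-u_j)\Gamma(1-s_j)$ has possible poles only along $u_j\in\Z_{\geq1}$ or $s_j\in\Z_{\geq1}$, but at such a point the corresponding power becomes single-valued, so the two rays of that $\mathcal{C}_\varepsilon$ cancel and only the small circle $C(0;\varepsilon)$ survives; since the remaining integrand is then holomorphic at the origin in that variable (the relevant exponent being a non-negative integer), Cauchy's theorem makes that contribution vanish, killing the pole. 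Thus the right-hand side of \eqref{analyyameta} is holomorphic on all of $\Co^{2r}$, and as it agrees with \eqref{yamamotoeta} on $\{\mathrm{Re}(s_j)>0\}$ it gives the desired continuation.

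Finally, the duality is immediate from \eqref{analyyameta}: the integrand $\prod_j\frac{x_j^{u_j-1}t_j^{s_j-1}}{e^{x_j+\cdots+x_r}+e^{t_j+\cdots+t_r}-1}$, the prefactor, and the domain $(\mathcal{C}_\varepsilon)^{2r}$ are all invariant under the simultaneous interchange $(x_j,u_j)\leftrightarrow(t_j,s_j)$ for $j=1,\dots,r$, whence $\eta(\mathbf{u}_r;\mathbf{s}_r)=\eta(\mathbf{s}_r;\mathbf{u}_r)$ throughout $\Co^{2r}$. I expect the genuine obstacle to be the analytic bookkeeping in the second step — verifying that for $\varepsilon$ small enough none of the denominators $e^{x_j+\cdots+x_r}+e^{t_j+\cdots+t_r}-1$ vanish along (and in the region swept by) the successive contours, so that both Fubini and the iterated deformations in the $t_j$ are valid, together with the uniform bounds needed for convergence; everything else is formal.
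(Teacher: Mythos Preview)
Your proposal is correct and follows essentially the same route as the paper (which proves the generalized version in Proposition~\ref{propcontourinteta} and Theorem~\ref{firstdual}, then recovers this statement as Corollary~3.6): substitute the contour representation of ${\mm{Li}}_{\mathbf{u}_r}^{\cy}$, convert the half-line integrals in the $t_j$ to Hankel loops via the reflection formula, and read off the continuation and duality from the resulting symmetric $2r$-fold contour integral. The only point where the paper is more explicit than your sketch is precisely the one you flag as the ``genuine obstacle'': the uniform bound on the integrand needed for Fubini and for the vanishing of the mixed contour pieces is supplied by Lemma~\ref{yamfivelemma} (Yamamoto's Lemma~2.4), which bounds $\left|e^{x/2}e^{t/2}/(e^x+e^t-1)\right|$ on $D_\varepsilon\times D_\varepsilon$; with that lemma in hand, your outline goes through verbatim.
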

	
	To give the analytic continuation of $\eta(\mathbf{u}_r;\mathbf{s}_r)$, the following lemma is essential. For sufficiently small $\varepsilon\in\R_{>0}$, put $D_{\varepsilon}=\{\alpha\in\Co| -\varepsilon\leq\mm{Im}(\alpha)\leq\varepsilon, -\varepsilon\leq\mm{Re}(\alpha)\}$.
	
	\begin{lemma}[{\cite[Lemma 2.4]{Ya}}]\label{yamfivelemma}
		For $x, t\in D_{\varepsilon}$ and sufficiently small $\varepsilon\in\R_{>0}$,
		\begin{align}\label{4r}
			\left|\frac{e^{\frac{1}{2}x}e^{\frac{1}{2}t}}{e^x+e^t-1}\right|
		\end{align}
		is bounded.
	\end{lemma}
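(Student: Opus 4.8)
The plan is to prove Lemma~\ref{yamfivelemma} by showing that the denominator $e^{x}+e^{t}-1$ stays bounded away from $0$ on the relevant region, while the numerator $e^{x/2}e^{t/2}$ stays bounded above; the quotient is then bounded. First I would split $D_{\varepsilon}$ into two pieces: the part $P_{1}$ where $\mm{Re}(x)\geq 1$ or $\mm{Re}(t)\geq 1$ (say $\mm{Re}(x)\geq 1$), and the compact part $P_{2}=\{\alpha\in\Co\mid -\varepsilon\leq\mm{Im}(\alpha)\leq\varepsilon,\ -\varepsilon\leq\mm{Re}(\alpha)\leq 1\}$, handling $x,t\in P_{2}$ separately. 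On $P_{2}\times P_{2}$ both variables lie in a fixed compact set, so it suffices to check that $e^{x}+e^{t}-1$ has no zero there: if $e^{x}+e^{t}=1$ with $\mm{Im}(x),\mm{Im}(t)$ small, then taking real parts forces $e^{\mm{Re}(x)}\cos(\mm{Im}(x))+e^{\mm{Re}(t)}\cos(\mm{Im}(t))$ to be near $1$ while the imaginary parts must nearly cancel; a short estimate shows that for $\varepsilon$ small enough this is impossible unless both $\mm{Re}(x),\mm{Re}(t)$ are near $-\infty$, which is excluded by $\mm{Re}\geq-\varepsilon$. Hence $|e^{x}+e^{t}-1|$ attains a positive minimum on the compact set $P_{2}\times P_{2}$, and since the numerator is continuous there, the quotient is bounded on $P_{2}\times P_{2}$.

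Next I would treat the case $\mm{Re}(x)\geq 1$ (the case $\mm{Re}(t)\geq 1$ being symmetric in form, with the roles of $x$ and $t$ interchanged; note the expression in \eqref{4r} is symmetric under swapping $x$ and $t$). Write $R=\mm{Re}(x)\geq 1$. Then, using $|e^{x}|=e^{R}$, $|e^{t}|\leq e^{\varepsilon}$ and the triangle inequality,
\[
|e^{x}+e^{t}-1|\geq e^{R}-e^{\varepsilon}-1\geq \tfrac{1}{2}e^{R}
\]
once $R$ is large enough, say $R\geq R_{0}$ for a fixed $R_{0}$ depending only on $\varepsilon$; for the intermediate band $1\leq R\leq R_{0}$ with $x,t$ otherwise in $D_{\varepsilon}$, the variables again range over a compact set on which $e^{x}+e^{t}-1$ must be checked to be nonvanishing (same elementary argument as above: a zero would force $\mm{Re}(t)$ to be very negative, impossible), so there $|e^{x}+e^{t}-1|$ has a positive lower bound. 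Meanwhile $|e^{x/2}e^{t/2}|=e^{R/2}e^{\mm{Re}(t)/2}\leq e^{R/2}e^{\varepsilon/2}$. Therefore on the region $R\geq R_{0}$,
\[
\left|\frac{e^{x/2}e^{t/2}}{e^{x}+e^{t}-1}\right|\leq \frac{e^{R/2}e^{\varepsilon/2}}{\tfrac12 e^{R}}=2e^{\varepsilon/2}e^{-R/2}\leq 2e^{\varepsilon/2},
\]
which is a uniform bound. Combining the finitely many regions ($P_{2}\times P_{2}$, the band $1\leq\mm{Re}(x)\leq R_{0}$, the band $1\leq\mm{Re}(t)\leq R_{0}$, and the tails $\mm{Re}(x)\geq R_{0}$ or $\mm{Re}(t)\geq R_{0}$) gives a single bound valid on all of $D_{\varepsilon}\times D_{\varepsilon}$.

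The main obstacle I anticipate is the verification that $e^{x}+e^{t}-1$ does not vanish on the compact intermediate regions where $\mm{Re}(x)$ or $\mm{Re}(t)$ is of moderate size; this is where the hypothesis that $\varepsilon$ is \emph{sufficiently small} is genuinely used, since for $\varepsilon=0$ (real $x,t$) one has $e^{x}+e^{t}-1>e^{-0}+e^{-0}-1>0$ trivially, but one must quantify how far into the complex direction one may perturb before a zero can appear. Concretely, writing $x=a+i\alpha$, $t=c+i\gamma$ with $|\alpha|,|\gamma|\leq\varepsilon$ and $a,c\geq-\varepsilon$, the imaginary part of $e^{x}+e^{t}-1$ is $e^{a}\sin\alpha+e^{c}\sin\gamma$ and the real part is $e^{a}\cos\alpha+e^{c}\cos\gamma-1$; one shows that whenever the real part is within, say, $\tfrac14$ of $0$, the two terms $e^{a}\cos\alpha$, $e^{c}\cos\gamma$ are bounded (hence $a,c$ are bounded above), and then for $\varepsilon$ small the sines are small while the cosines are near $1$, forcing the real part to be bounded below by a positive constant after all — a contradiction. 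Making these inequalities explicit is routine but is the one place requiring care; everything else is packaging via compactness and the triangle inequality.
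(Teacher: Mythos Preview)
Your overall strategy---nonvanishing plus compactness on a bounded region, and a tail estimate elsewhere---is sound, but there is a genuine gap in the tail part. In the case $\mm{Re}(x)\geq 1$ you assert $|e^{t}|\leq e^{\varepsilon}$; this is false, since $t\in D_{\varepsilon}$ only constrains $\mm{Re}(t)$ from \emph{below} ($\mm{Re}(t)\geq -\varepsilon$), not from above. When $\mm{Re}(t)$ is also large your triangle inequality $|e^{x}+e^{t}-1|\geq e^{R}-|e^{t}|-1$ can be negative, and none of your listed regions covers the case where both $\mm{Re}(x)$ and $\mm{Re}(t)$ tend to $+\infty$ together. The repair is straightforward: bound the denominator below by its real part instead of by the triangle inequality. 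For $|\mm{Im}(x)|,|\mm{Im}(t)|\leq\varepsilon$ one has $\mm{Re}(e^{x}+e^{t}-1)\geq(e^{\mm{Re}(x)}+e^{\mm{Re}(t)})\cos\varepsilon-1$, which is positive for small $\varepsilon$ and all $\mm{Re}(x),\mm{Re}(t)\geq-\varepsilon$; combined with $|e^{x/2}e^{t/2}|=e^{(\mm{Re}(x)+\mm{Re}(t))/2}\leq\tfrac12(e^{\mm{Re}(x)}+e^{\mm{Re}(t)})$ (AM--GM) this gives a uniform bound on all of $D_{\varepsilon}\times D_{\varepsilon}$ with no case analysis at all. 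Incidentally, this also shows that the nonvanishing on the compact piece---which you flag as the main obstacle---is in fact immediate.

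For comparison: the paper does not prove Lemma~\ref{yamfivelemma} itself (it is quoted from \cite{Ya}), but it proves the closely related Lemma~\ref{fivelemma} by a rather different method, setting $w=e^{x}-\tfrac12$, $y=e^{t}-\tfrac12$ and factoring the quotient as a product of five terms, each bounded separately by an elementary inequality. That argument yields explicit constants and avoids any region-splitting; your approach, once repaired as above, is more qualitative but also shorter.
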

	
	\begin{remark}
		Kawasaki and Ohno gave a combinatorial proof of Theorem \ref{etaposi} (for details, see\cite{KO}).
	\end{remark}
	
	As analogues of \eqref{mulbber} and \eqref{kteta}, Ito defined multi-indexed poly-Bernoulli numbers of $C$-type and related Arakawa-Kaneko type zeta functions as follows.
	
	\begin{definition}[{\cite[Definition 1]{It2}, \cite[Definition 2.1]{It3}}]
		For $\mathbf{u}_r\in\Co^r$ and $d\in\{1,\ldots,r\}$, multi-indexed poly-Bernoulli numbers of $C$-type $C_{\mathbf{n}_r}^{(\mathbf{u}_r),(d)}$ are defined by 
		\begin{align}\label{itocber}
			\frac{{\mm{Li}}_{\mathbf{u}_r}^{\cy}(1-e^{-t_1-\cdots-t_r},\ldots,1-e^{-t_r})}{(e^{t_1+\cdots+t_r}-1)\cdots(e^{t_d+\cdots+t_r}-1)}=\sum_{m_1,\ldots,m_r=0}^{\infty} 
			C_{\mathbf{m}_r}^{(\mathbf{u}_r),(d)}\frac{t_1^{m_1}}{m_1!}\cdots\frac{t_r^{m_r}}{m_r!}.
		\end{align}
		Also, for $\mathbf{k}_r\in\Z_{\geq1}^r, -\mathbf{l}_r=(-l_1,\ldots,-l_r)\in\Z_{\leq0}^r$ with $l_1\geq1$ and $d\in\{1,\ldots,r\}$, related Arakawa-Kaneko type zeta functions 
		$\xi(\mathbf{k}_r;\mathbf{s}_r;d)$ and $\widetilde{\xi}(-\mathbf{l}_r;\mathbf{s}_r;d)$ are defined by
		\begin{align*}
			\xi(\mathbf{k}_r;\mathbf{s}_r;d)&=\prod_{j=1}^{r}\frac{1}{\Gamma(s_j)}\int_{0}^{\infty} t_1^{s_1-1}\cdots t_r^{s_r-1}\frac{{\mm{Li}}_{\mathbf{k}_r}^{\cy}(1-e^{-t_1-\cdots-t_r},\ldots,1-e^{-t_r})}{(e^{t_1+\cdots+t_r}-1)\cdots(e^{t_d+\cdots+t_r}-1)}dt_1\cdots dt_r,\\
			\widetilde{\xi}(-\mathbf{l}_r;\mathbf{s}_r;d)&=\prod_{j=1}^{r}\frac{1}{\Gamma(s_j)}\int_{0}^{\infty} t_1^{s_1-1}\cdots t_r^{s_r-1}\frac{{\mm{Li}}_{-\mathbf{l}_r}^{\cy}(1-e^{t_1+\cdots+t_r},\ldots,1-e^{t_r})}{(e^{-t_1-\cdots-t_r}-1)\cdots(e^{-t_d-\cdots-t_r}-1)}dt_1\cdots dt_r.
		\end{align*}
	\end{definition}
	Ito showed the following relations.
	\begin{theorem}[{\cite[Theorem 1]{It2}, \cite[Theorem 3.1]{It3}}]
		For $\mathbf{k}_r\in\Z_{\geq1}^r, \mathbf{l}_r\in\Z_{\geq0}^r$ with $l_1\geq1$, $\mathbf{n}_r\in\Z_{\geq0}^r$ and $d\in\{1,\ldots,r\}$, we have
		\begin{align*}
			\xi(\mathbf{k}_r;-\mathbf{n}_r;d)&=(-1)^{n_1+\cdots+n_r}C_{\mathbf{n}_r}^{(\mathbf{k}_r),(d)},\\
			\widetilde{\xi}(-\mathbf{l}_r;-\mathbf{n}_r;d)&=C_{\mathbf{n}_r}^{(-\mathbf{l}_r),(d)}.
		\end{align*}
	\end{theorem}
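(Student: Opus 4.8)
The plan is to evaluate these $r$-fold Mellin transforms at non-positive integers by the Hankel-contour method used in the proof of \cite[Theorem 2.5]{Ya}, thereby reducing the problem to reading off a single Taylor coefficient. Put
\[
G(\mathbf{t})=\frac{{\mm{Li}}_{\mathbf{k}_r}^{\cy}(1-e^{-t_1-\cdots-t_r},\ldots,1-e^{-t_r})}{(e^{t_1+\cdots+t_r}-1)\cdots(e^{t_d+\cdots+t_r}-1)},
\]
so that, by \eqref{itocber}, the coefficient of $t_1^{n_1}\cdots t_r^{n_r}$ in the Taylor expansion of $G$ at the origin equals $C_{\mathbf{n}_r}^{(\mathbf{k}_r),(d)}/(n_1!\cdots n_r!)$. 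The analytic facts I would first record are: (i) $G$ is holomorphic in a neighborhood of the origin — with $T_j=t_j+\cdots+t_r$ one factors ${\mm{Li}}_{\mathbf{k}_r}^{\cy}(\mathbf{z}_r)=(z_1\cdots z_r)E(\mathbf{z}_r)$, $E$ holomorphic near $\mathbf{0}$ with $E(\mathbf{0})\neq0$, and uses $1-e^{-T_j}=T_j\bigl(1+O(T_j)\bigr)$ and $e^{T_j}-1=T_j\bigl(1+O(T_j)\bigr)$ to get $G=\bigl(\prod_{j=d+1}^{r}T_j\bigr)\cdot(\text{a unit near }\mathbf{0})$; in fact $G$ is holomorphic in a neighborhood of the Hankel region $(\mathcal{C}_{\varepsilon})^r$ for small $\varepsilon$ (using Yamamoto's continuation of ${\mm{Li}}_{\mathbf{k}_r}^{\cy}$, the apparent poles of the denominator being cancelled by the vanishing of ${\mm{Li}}_{\mathbf{k}_r}^{\cy}$ there); and (ii) for every $\mathbf{s}_r\in\Co^r$ the function $\prod_j t_j^{s_j-1}G(\mathbf{t})$ is absolutely integrable on $(\mathcal{C}_{\varepsilon})^r$, since $G$ is bounded near the circles, while on the rays the factor $1/(e^{t_1+\cdots+t_r}-1)$ forces exponential decay that dominates the at-most-polynomial growth (in $t_1+\cdots+t_r$) of ${\mm{Li}}_{\mathbf{k}_r}^{\cy}$ as its arguments approach $1$.

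Granting (i) and (ii), the usual contour manipulation — collapse each $\mathcal{C}_{\varepsilon}$ onto $[0,\infty)$, discard the vanishing circle contribution, and apply $\Gamma(s_j)\Gamma(1-s_j)=\pi/\sin\pi s_j$ — gives, for $\mm{Re}(s_j)>0$,
\[
\xi(\mathbf{k}_r;\mathbf{s}_r;d)=\prod_{j=1}^{r}\frac{\Gamma(1-s_j)}{2\pi i\,e^{\pi i s_j}}\int_{(\mathcal{C}_{\varepsilon})^r}\prod_{j=1}^{r}t_j^{s_j-1}\,G(\mathbf{t})\,\prod_{j=1}^{r}dt_j,
\]
and by (ii) the right-hand side is entire in $\mathbf{s}_r$, hence is the analytic continuation. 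Setting $\mathbf{s}_r=-\mathbf{n}_r$, each $t_j^{-n_j-1}$ is single-valued, so the two rays of $\mathcal{C}_{\varepsilon}$ cancel and $\mathcal{C}_{\varepsilon}$ reduces to the circle $\lvert t_j\rvert=\varepsilon$ traversed once counterclockwise; by the residue theorem the $j$-th integration produces $2\pi i$ times the coefficient of $t_j^{n_j}$, and iterating over $j=1,\ldots,r$ turns the contour integral into $(2\pi i)^r\,C_{\mathbf{n}_r}^{(\mathbf{k}_r),(d)}/(n_1!\cdots n_r!)$. Since the prefactor at $\mathbf{s}_r=-\mathbf{n}_r$ equals $\prod_j n_j!\big/\bigl(2\pi i\,(-1)^{n_j}\bigr)$, this gives $\xi(\mathbf{k}_r;-\mathbf{n}_r;d)=(-1)^{n_1+\cdots+n_r}C_{\mathbf{n}_r}^{(\mathbf{k}_r),(d)}$.

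For the second identity I would run exactly the same argument with $G$ replaced by the integrand $\widetilde{G}(\mathbf{t})$ of $\widetilde{\xi}(-\mathbf{l}_r;\mathbf{s}_r;d)$; this $\widetilde{G}$ is obtained from the left-hand side of \eqref{itocber} (with $\mathbf{u}_r=-\mathbf{l}_r$) by the substitution $t_j\mapsto-t_j$, so the coefficient of $t_1^{n_1}\cdots t_r^{n_r}$ in $\widetilde{G}$ equals $(-1)^{n_1+\cdots+n_r}C_{\mathbf{n}_r}^{(-\mathbf{l}_r),(d)}/(n_1!\cdots n_r!)$; moreover ${\mm{Li}}_{-\mathbf{l}_r}^{\cy}(\mathbf{z}_r)$ is a rational function, regular at $\mathbf{0}$ and vanishing there to multidegree $(1,\ldots,1)$, so the analogues of (i) and (ii) still hold. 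Now the extra sign $(-1)^{n_1+\cdots+n_r}$ coming from the Taylor coefficient cancels the sign in the prefactor, yielding $\widetilde{\xi}(-\mathbf{l}_r;-\mathbf{n}_r;d)=C_{\mathbf{n}_r}^{(-\mathbf{l}_r),(d)}$. The real obstacle is (i)--(ii): one has to verify carefully that the poles of the denominators are exactly absorbed by the zeros of the multiple polylogarithm at $\mathbf{0}$ and along the contour, and that — most delicately when $k_r=1$, where ${\mm{Li}}_{\mathbf{k}_r}^{\cy}$ genuinely blows up (at a polynomial-in-$\log$ rate) as the arguments tend to $1$ — this blow-up is strictly slower than the exponential decay supplied by $1/(e^{t_1+\cdots+t_r}-1)$; once these estimates are in place, the coefficient bookkeeping is routine.
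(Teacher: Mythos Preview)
The paper does not give its own proof of this statement: it is quoted in the Preliminaries section as a known result of Ito (\cite{It2,It3}), with no argument supplied. The paper's closest analogue is Proposition~\ref{etaber}, whose proof is likewise omitted with a pointer to \cite[Theorem~5.7]{KT1}. So there is nothing in the paper to compare your argument against line by line.

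That said, your approach is correct and is exactly the method one expects here, and it is the same method the paper itself employs for its own continuation results (Lemma~\ref{symfivelemma} and Proposition~\ref{propcontourinteta}): pass to the Hankel-contour integral to obtain the analytic continuation in $\mathbf{s}_r$, then specialize to $\mathbf{s}_r=-\mathbf{n}_r$ where each $t_j^{-n_j-1}$ is single-valued, so the two rays of $\mathcal{C}_\varepsilon$ cancel and only the small circles survive, reducing the computation to a single Taylor coefficient via the residue theorem. Your bookkeeping of the prefactor $\prod_j\Gamma(1-s_j)/(2\pi i\,e^{\pi i s_j})$ at $s_j=-n_j$ and of the sign flip $t_j\mapsto -t_j$ relating $\widetilde{G}$ to the generating series \eqref{itocber} is accurate.

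The caveats you yourself flag are the right ones. For (i), holomorphy of $G$ on the whole Hankel region (not just at the origin) requires checking that along $(\mathcal{C}_\varepsilon)^r$ the partial sums $T_j=t_j+\cdots+t_r$ never hit $2\pi i\Z\setminus\{0\}$ (true for $\varepsilon$ small since $|\mathrm{Im}\,T_j|\le r\varepsilon$), and that the apparent pole at $T_j=0$ is cancelled by the order-one zero of the numerator there; both are routine. For (ii), the delicate case $k_r=1$ is handled exactly as you say: the logarithmic blow-up of ${\mm{Li}}_{\mathbf{k}_r}^{\cy}$ near $z_r=1$ is dominated by the exponential decay of $1/(e^{T_1}-1)$, since $d\ge1$ guarantees at least one such denominator factor. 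With these estimates in hand the proof is complete; nothing further is needed.
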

	\begin{theorem}[{\cite[Theorem 2]{It2}}]
		For $k_1,\ldots,k_r\in\Z_{\geq0}$, we have
		\begin{align*}
			\widetilde{\xi}(-k_1-1,-k_2,\ldots,-k_r;s_1,\ldots,s_r;r)&=\sum_{a=1}^{r}\sum_{1=b_1<\cdots<b_a\leq r}^{}C_{b(k_1,\ldots,k_r)}^{b(-s_1-1,\ldots,-s_r),(1)},
		\end{align*}
		where $b(k_1,\ldots,k_r)=(k_1+\cdots+k_{b_2-1},k_{b_2}+\cdots+k_{b_3-1},\ldots,k_{b_{a-1}}+\cdots+k_{r})$. In particular, for $n_1,\ldots,n_r\in\Z_{\geq0}$, we have
		\begin{align}\label{dualc}
			C_{n_1,\ldots,n_r}^{(-k_1-1,\ldots,-k_r),(r)}=\sum_{a=1}^{r}\sum_{1=b_1<\cdots<b_a\leq r}^{}C_{b(k_1,\ldots,k_r)}^{b(-n_1-1,\ldots,-n_r),(1)},
		\end{align}
		which is a generalization of that for poly-Bernoulli numbers $\cber{n}{(-k)}$.
	\end{theorem}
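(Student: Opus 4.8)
The natural approach is to follow the template by which Yamamoto established $\eta(\mathbf{u}_r;\mathbf{s}_r)=\eta(\mathbf{s}_r;\mathbf{u}_r)$: pass to a $2r$-fold contour integral with an essentially symmetric kernel and extract the identity by expanding the part that destroys the symmetry. The starting point is the elementary rewriting
\[
\frac{1}{(e^{-t_1-\cdots-t_r}-1)\cdots(e^{-t_r}-1)}=\frac{\prod_{i=1}^{r}e^{it_i}}{(1-e^{t_1+\cdots+t_r})\cdots(1-e^{t_r})},
\]
which shows that, for $\mm{Re}(s_j)$ large, $\widetilde{\xi}(-k_1-1,-k_2,\ldots,-k_r;\mathbf{s}_r;r)$ is the integral \eqref{yamamotoeta} defining $\eta(-k_1-1,-k_2,\ldots,-k_r;\mathbf{s}_r)$ weighted by the extra factor $\prod_{i}e^{it_i}$. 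Substituting the contour expression \eqref{analy} for the $\cy$-type polylogarithm (with $u_1=-k_1-1$ and $u_j=-k_j$ for $j\ge 2$) and converting each $\int_{0}^{\infty}dt_j$ into a Hankel integral over $\mathcal{C}_\varepsilon$ as in the proof of \eqref{analyyameta}, the two kernels combine, for each slot $j$, into
\[
\frac{e^{t_j+\cdots+t_r}}{e^{x_j+\cdots+x_r}+e^{t_j+\cdots+t_r}-1}=1-\frac{e^{x_j+\cdots+x_r}-1}{e^{x_j+\cdots+x_r}+e^{t_j+\cdots+t_r}-1},
\]
that is, into Yamamoto's symmetric kernel plus a ``$1$''.

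Multiplying out over $j=1,\ldots,r$ then expresses (a suitably regularised form of) $\widetilde{\xi}$ as a finite sum over the subsets $S\subseteq\{1,\ldots,r\}$ of slots in which the ``$1$'' is taken; encoding the complement of $S$ by its breakpoints $1=b_1<b_2<\cdots<b_a\le r$, the slots of $S$ carry no $t$-variable, so integrating them out shifts the Gamma-exponent in the leading slot of each block ($s_j\mapsto s_j-1$) and merges the adjacent $x$-variables, leaving a $2a$-fold contour integral of exactly the shape that, via \eqref{analy}, \eqref{itocber} and the already-cited identities $\widetilde{\xi}(-\mathbf{l}_r;-\mathbf{n}_r;d)=C_{\mathbf{n}_r}^{(-\mathbf{l}_r),(d)}$ and $\xi(\mathbf{k}_r;-\mathbf{n}_r;d)=(-1)^{n_1+\cdots+n_r}C_{\mathbf{n}_r}^{(\mathbf{k}_r),(d)}$ (the latter used to identify the ``complex upper index'' $C$-values on the right with $\xi$-type integrals), represents the summand $C_{b(k_1,\ldots,k_r)}^{b(\cdot),(1)}$; here the value $(1)$ of the last parameter records that only the single outermost $t$-variable of each block survives, and the contraction $b(\cdot)$ records the block structure, with the ``$-1$'' inherited from $-k_1-1$. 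Summing over $a$ and over all $1=b_1<\cdots<b_a\le r$ gives the identity for $\widetilde{\xi}$, and specialising $s_j=-n_j$ then yields \eqref{dualc} by applying the first cited identity to the left-hand side.

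The step I expect to be genuinely delicate is making the passage through the $2r$-fold contour integral, and the term-by-term expansion above, rigorous. The extra weight $\prod_i e^{it_i}$ spoils the absolute convergence that Yamamoto had: a sharpening of Lemma \ref{yamfivelemma} shows only that $\frac{e^{t}}{e^{x}+e^{t}-1}$ is bounded on $D_\varepsilon^{2}$ (split the regimes $\mm{Re}(t)\gtrless\mm{Re}(x)$), so the combined kernel $\prod_j\bigl(e^{x_j+\cdots+x_r}+e^{t_j+\cdots+t_r}-1\bigr)^{-1}e^{t_j+\cdots+t_r}$ is bounded but does not decay in the $t$-variables; hence the individual summands obtained after expansion need not converge absolutely and must be handled by exploiting the cancellation along the $x$-contours (equivalently, by performing the $x$-integrals first, which restores the exponential decay in $t_j$ and recovers $\mm{Li}_{-\mathbf{l}_r}^{\cy}$). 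Once the expansion and the interchange of the finite sum with the integral are justified on that basis, the remaining ingredients — evaluating the one-dimensional $\Gamma$-integrals over the $S$-slots, matching the merging of $x$-variables with the definition of $b(\cdot)$, and recognising the surviving contour integrals as $C$-type values — are routine. An alternative would be to prove the numerical identity \eqref{dualc} first (e.g.\ by expressing both sides through Stirling numbers and giving a bijective matching of the block decompositions on the right with the combinatorial structure of $C_{n_1,\ldots,n_r}^{(-k_1-1,\ldots,-k_r),(r)}$) and then to upgrade to the functional identity by a Carlson-type uniqueness argument; this trades the contour analysis for a growth estimate of $\widetilde{\xi}(-\mathbf{k}_r;\mathbf{s}_r;r)$ in $\mathbf{s}_r$ of comparable difficulty.
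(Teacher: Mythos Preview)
The paper does not contain a proof of this theorem: it appears in Section~2 (Preliminaries) as a cited result, attributed to \cite[Theorem~2]{It2}, and no argument is given or even sketched. There is therefore nothing in the present paper to compare your proposal against; any genuine comparison would have to be with Ito's original paper.

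That said, a brief comment on your outline. The starting identity $\frac{1}{\prod_j(e^{-t_j-\cdots-t_r}-1)}=\frac{\prod_i e^{it_i}}{\prod_j(1-e^{t_j+\cdots+t_r})}$ and the algebraic splitting $\frac{e^{t}}{e^{x}+e^{t}-1}=1-\frac{e^{x}-1}{e^{x}+e^{t}-1}$ are both correct and do give the right combinatorics: expanding the product over $j$ and indexing by the complement's breakpoints $1=b_1<\cdots<b_a\le r$ is exactly the shape of the right-hand side. Where your sketch is thin is precisely where you flag it: the extra weight $\prod_i e^{it_i}$ destroys the decay that makes Yamamoto's $2r$-fold Hankel integral absolutely convergent, and the individual summands after expansion (the ``$1$'' terms) are divergent in isolation, so the passage ``integrating out the $S$-slots shifts $s_j\mapsto s_j-1$'' is not literally valid as written---$\int_{\mathcal{C}_\varepsilon} t^{s-1}\,dt$ diverges. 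You would need either to keep the $x$-integrals inside (as you suggest) and show that the resulting $t$-integrand has enough decay for each block, or to work entirely at the level of generating functions in the $t$-variables before integrating, which is closer in spirit to how such identities for $C$-type multi-poly-Bernoulli numbers are usually established. Your alternative route through \eqref{dualc} via Stirling-number expansions is in fact how Ito's argument proceeds in the cited reference, and is less analytically delicate than the contour route you lead with.
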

	\begin{theorem}[{\cite[Theorem C.12]{It1}, {\cite[Theorem 4.3]{It3}}}]\label{mzvfunc}
		For $\mathbf{k}_r, \mathbf{n}_r\in\Z_{\geq1}$ and $d\in\{1,\ldots,r\}$, we have
		\begin{align*}
			\xi(\mathbf{k}_r;\mathbf{n}_r;d), \eta(\mathbf{k}_r;\mathbf{n}_r)\in\mathcal{Z}_{k_1+\cdots+k_r+n_1+\cdots+n_r}.
		\end{align*}
	\end{theorem}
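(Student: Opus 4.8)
The plan is to reduce each of $\xi(\mathbf{k}_r;\mathbf{n}_r;d)$ and $\eta(\mathbf{k}_r;\mathbf{n}_r)$ to an iterated integral over a simplex whose integrand is a word in the two Kontsevich--Zagier $1$-forms $\omega_0=du/u$ and $\omega_1=du/(1-u)$. Such an integral is, by the standard theory of iterated integrals (or by Yamamoto's formalism of integrals attached to $2$-labeled posets), a $\Q$-linear combination of multiple zeta values all of weight equal to the length of the word; tracking that length then produces the prescribed weight $k_1+\cdots+k_r+n_1+\cdots+n_r$.

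First I would unfold the Mellin factors. Since $\mathbf{n}_r\in\Z_{\geq1}^r$ one has $\Gamma(n_j)=(n_j-1)!$, and the elementary identity $t_j^{n_j-1}/(n_j-1)!=\int_{0<u_{j,1}<\cdots<u_{j,n_j-1}<t_j}du_{j,1}\cdots du_{j,n_j-1}$ turns $\frac{1}{\Gamma(n_j)}\int_0^\infty t_j^{n_j-1}(\cdots)\,dt_j$ into an $n_j$-dimensional integral along an increasing chain, the relevant kernel factor (for $\xi$ and $j\le d$, the piece $dt_j/(e^{t_j+\cdots+t_r}-1)$; otherwise the corresponding part of the $\eta$-kernel) surviving on the top variable $t_j$; thus each $n_j$ is accounted for by exactly $n_j$ integration variables. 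Next I would write $\mm{Li}_{\mathbf{k}_r}^{\cy}\bigl(1-e^{-t_1-\cdots-t_r},\ldots,1-e^{-t_r}\bigr)$ as an iterated integral of a word of length $k_1+\cdots+k_r$: starting from the series definition together with $1/m^a=\frac{1}{(a-1)!}\int_0^\infty v^{a-1}e^{-mv}\,dv$ (equivalently from the iterated-integral representation of the polylogarithm), the substitutions $v\mapsto 1-e^{-v}$ convert its forms into forms in the exponential variables, since $d(1-e^{-v})/(1-(1-e^{-v}))=dv$ and $d(1-e^{-v})/(1-e^{-v})=dv/(e^{v}-1)$ precisely match the shape of the Mellin-factor pieces above. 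For $\eta(\mathbf{k}_r;\mathbf{n}_r)$ one begins instead from Yamamoto's Hankel-contour expression \eqref{analyyameta}: at positive integers $u_j=k_j$, $s_j=n_j$ the branch factors $x_j^{u_j-1}$ and $t_j^{s_j-1}$ are single-valued while the prefactors $\Gamma(1-u_j)\Gamma(1-s_j)/(2\pi i)^2$ have poles, and collapsing $(\mathcal{C}_\varepsilon)^{2r}$ onto the real axis as $\varepsilon\to0$ --- a residue/limit computation using the boundedness supplied by Lemma \ref{yamfivelemma} --- recovers an honest convergent real integral, after which the argument proceeds as for $\xi$.

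Assembling, all the exponential integration variables (the chain variables $u_{j,i}$, the top variables $t_j$, and the inner polylogarithm variables) can be arranged, after a global change of variables of the shape $s=e^{-(\text{partial sum of the }t\text{'s})}$, so that the whole object becomes $\int_{0<s_1<\cdots<s_N<1}\omega_{\epsilon_1}\cdots\omega_{\epsilon_N}$ with $N=k_1+\cdots+k_r+n_1+\cdots+n_r$ and each $\epsilon_i\in\{0,1\}$ --- in general a finite $\Q$-linear combination of such integrals, one for each way the nested domains fit together --- and the non-admissible words ($\epsilon_1=1$ or $\epsilon_N=0$) are eliminated by the usual shuffle regularization, which preserves the weight, so that the value lands in $\mathcal{Z}_N$. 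I expect the assembling step to be the main obstacle: the variables are coupled through the partial sums $t_j+\cdots+t_r$ (and $x_j+\cdots+x_r$), so the change of variables is not a product substitution, and one must verify that the images of the various nested domains glue into a union of simplices each carrying a word of the \emph{same} length $N$, and --- in the $\eta$ case --- that the analytic continuation of $\mm{Li}_{\mathbf{k}_r}^{\cy}$ from \eqref{analy} makes the contour collapse legitimate and the resulting words convergent (or regularizable without altering the weight). Once the simplex form is reached the weight count is automatic: $k_j$ forms from each polylogarithm slot and $n_j$ forms from each Mellin slot, totalling $N$.
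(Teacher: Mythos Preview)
Your outline is right in spirit---after the substitution $x_j=1-e^{-t_j-\cdots-t_r}$ the outer integral does become an integral over the simplex $0<x_r<\cdots<x_1<1$ against the forms $dx_j/x_j$ and $dx_j/(1-x_j)$, and the Mellin powers $t_j^{n_j-1}$ become $(\mathrm{Li}_1(x_j)-\mathrm{Li}_1(x_{j+1}))^{n_j-1}=(I(0;1;x_{j+1})-I(0;1;x_j))^{n_j-1}$, exactly as you describe. The genuine gap is the ``assembling step'' that you yourself flag. After this change of variables the inner polylogarithm does \emph{not} become a word in $\omega_0,\omega_1$: by \eqref{polyhyp} one gets a hyperlogarithm $I(0;a_1,\ldots,a_n;1)$ whose letters lie in $\{0,x_1^{-1},\ldots,x_r^{-1}\}$ (for $\xi$) or, after the path-reversal identity, in $\{1,x_1^{-1},\ldots,x_r^{-1}\}$ (for $\eta$). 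These letters move with the outer variables $x_j$, so the total integrand is \emph{not} of Kontsevich--Zagier type on a single simplex, and no further global coordinate change will make it so; your hope that the nested domains ``glue into a union of simplices each carrying a word of the same length'' is precisely the statement that needs proof, and it is not a matter of bookkeeping.

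The paper's route (following Ito) supplies exactly the missing mechanism. One keeps the hyperlogarithm with moving letters and integrates the $x_j$ one at a time, using the Goncharov-type differentiation formula (Lemma~\ref{itlem1}) to perform ``variable removing'': differentiating $I(0;a_1,\ldots,a_n;1)$ with respect to the outermost remaining $x_j$ produces a $\Q$-linear combination of shorter hyperlogarithms times rational $1$-forms in $x_j$, which upon integrating back lands in the controlled space $V(x_1,\ldots,x_{j-1};x_j)$ of Lemma~2.11. Iterating this and invoking Lemma~\ref{itlem2} shows the final integral lies in $\mathcal{Z}_{n+l+r}$ with $n+l+r=k_1+\cdots+k_r+n_1+\cdots+n_r$. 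This machinery is what replaces your appeal to ``shuffle regularization of non-admissible words''; no regularization is needed because Lemma~\ref{itlem2} already handles convergence. For the $\eta$ case no contour-collapse argument is used either: one works directly with the convergent real integral \eqref{yamamotoeta} and the identity $I(0;1-x^{-1},\{0\}^{k-1},\ldots;1)=I(0;\ldots,\{1\}^{k-1},x^{-1};1)$ to reach the same hyperlogarithm framework.
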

	To show Theorem \ref{mzvfunc}, we review some properties of the hyperlogarithm. Note that, though we use the same terminology ``hyperlogarithm'', the following definition and results obtained by Ito are generalizations of known results.
	\begin{definition}[{cf. \cite{Lap}}]
		Let $a_0\in\R$, $a_{n+1}$ be a variable with $a_0<a_{n+1}$ and $a_1,\ldots,a_n$ be variables with $a_j\in\Co\setminus(a_0,a_{n+1})$ for each point, $a_1\neq a_0$ and $a_n\neq a_{n+1}$. For them, the hyperlogarithm is defined by
		\[
		I(a_0;a_1,\ldots,a_n;a_{n+1})=\int_{a_0<t_1<\cdots<t_n<a_{n+1}}^{}\prod_{j=1}^{n}\frac{dt_j}{t_j-a_j}.
		\]
	\end{definition}
	
	We can show the following lemma by definition.
	
	\begin{lemma}\cite[Lemma 4.4]{It3} Let the notation be the same as above. For $c\in\R_{>0}$, we have
		\begin{align*}
			I(a_0;a_1,\ldots,a_n;a_{n+1})=I(ca_0;ca_1,\ldots,ca_n;ca_{n+1}).
		\end{align*}
		Also, for $a_0<x, a_1,\ldots,a_n\in\Co\setminus(a_0, x)$ with $a_1\neq a_0, a_n\neq x$ and $b\in\Co\setminus (a_0,x]$, we have
		\begin{align*}
			\int_{a_0}^{x}\frac{1}{y-b}I(a_0;a_1,\ldots,a_n;y) dy =I(a_0;a_1,\ldots,a_n,b;x).
		\end{align*}
	\end{lemma}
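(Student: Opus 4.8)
The plan is to read both identities off the defining integral directly: the first via a linear change of variables, the second via Fubini's theorem applied to the nested simplex. Both are elementary, the only point needing care being the verification that the integrands are absolutely integrable on the relevant domains — which is precisely what the hypotheses on $a_0,\ldots,a_{n+1}$ and $b$ are there to guarantee.

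For the scaling identity I would start from the right-hand side
\[
I(ca_0;ca_1,\ldots,ca_n;ca_{n+1})=\int_{ca_0<t_1<\cdots<t_n<ca_{n+1}}\prod_{j=1}^{n}\frac{dt_j}{t_j-ca_j}
\]
and substitute $t_j=cu_j$ for $j=1,\ldots,n$. Because $c>0$, the chain $ca_0<t_1<\cdots<t_n<ca_{n+1}$ is equivalent to $a_0<u_1<\cdots<u_n<a_{n+1}$; the factor $dt_j=c\,du_j$ in each numerator cancels the scaling $cu_j-ca_j=c(u_j-a_j)$ in the denominator; and the resulting integral is exactly the one defining $I(a_0;a_1,\ldots,a_n;a_{n+1})$. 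I would also note that multiplication by $c>0$ preserves the admissibility conditions ($ca_j\notin(ca_0,ca_{n+1})$, $ca_1\neq ca_0$, $ca_n\neq ca_{n+1}$), so the two sides are simultaneously defined.

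For the second identity I would unfold the right-hand side, whose pole sequence is $(a_1,\ldots,a_n,b)$ and whose upper endpoint is $x$:
\[
I(a_0;a_1,\ldots,a_n,b;x)=\int_{a_0<t_1<\cdots<t_n<t_{n+1}<x}\frac{dt_{n+1}}{t_{n+1}-b}\prod_{j=1}^{n}\frac{dt_j}{t_j-a_j},
\]
and then apply Fubini's theorem to perform the integration over $(t_1,\ldots,t_n)$ first, with $t_{n+1}$ held fixed as the (new) upper endpoint. The inner integral over $\{a_0<t_1<\cdots<t_n<t_{n+1}\}$ is by definition $I(a_0;a_1,\ldots,a_n;t_{n+1})$ — here one uses that $t_{n+1}<x$ and $a_j\notin(a_0,x)$ force $a_j\notin(a_0,t_{n+1})$, so this inner hyperlogarithm is itself admissible — and what remains is $\int_{a_0}^{x}\tfrac{1}{t_{n+1}-b}\,I(a_0;a_1,\ldots,a_n;t_{n+1})\,dt_{n+1}$. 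Renaming $t_{n+1}$ as $y$ gives the claimed formula.

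The one step that genuinely needs justification is the application of Fubini's theorem, i.e. the absolute integrability of the integrand over the simplex, in particular near the corners ($t_i\to a_0$, $t_i\to x$) and near any of the values $a_1,\ldots,a_n,b$ that an integration variable could approach. This is exactly what the hypotheses provide: $a_1\neq a_0$ and $a_n\neq x$ rule out the two logarithmically divergent endpoint configurations; $a_j\notin(a_0,x)$ keeps the poles $a_j$ off the contour; $b\notin(a_0,x]$ keeps $b$ off the contour and prevents $1/(y-b)$ from blowing up at the upper endpoint $y=x$; and in the remaining case $b=a_0$ the factor $1/(y-b)$ is tamed by the vanishing of $I(a_0;a_1,\ldots,a_n;y)$ as $y\to a_0^+$. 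Granting absolute integrability, the substitution and the Fubini interchange are entirely routine, so the lemma follows directly from the definition, as asserted.
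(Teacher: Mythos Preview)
Your proof is correct and matches the paper's approach: the paper merely states that the lemma ``can be shown by definition'' without giving details, and your argument does precisely that, via the obvious change of variables for the first identity and Fubini for the second. Your added care in verifying absolute integrability (particularly the case $b=a_0$) is more than the paper supplies.
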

	
	Note that, for $\mathbf{k}_r\in\Z_{\geq1}^r$ and $\mathbf{z}_r\in\Co^r$ with $|z_j|<1~(j=1,\ldots,r)$, we have
	\begin{align}
		{\mm{Li}}_{\mathbf{k}_r}^{\cy}(\mathbf{z}_r)=(-1)^rI(0;z_1^{-1},\overbrace{0,\ldots,0}^{k_1-1},\ldots,z_r^{-1},\overbrace{0,\ldots,0}^{k_r-1};1).\label{polyhyp}
	\end{align}
	To obtain Theorem \ref{mzvfunc}, the following lemmas are essential.
	\begin{lemma}[{\cite[Lemma 4.5]{It3}, cf. \cite[Theorem 2.1]{Go}, \cite[Lemma 2.2]{HIST}, \cite[Lemma 3.3.30]{Pan}}]\label{itlem1}
		For $a_0\in\R$, $a_{n+1}=a_{n+1}(x)$ with $a_0<a_{n+1}(x)$ and $a_1=a_1(x),\ldots,a_n=a_n(x)$ with $a_j(x)\in\Co\setminus(a_0,a_{n+1}(x))$ for each point $x$, $a_1\neq a_0, a_n\neq a_{n+1}$, we have
		\[
		\pdv{}{x}I(a_0;a_1,\ldots,a_n;a_{n+1})=\sum_{j=1}^{n} (\epsilon_j(\mathbf{a})-\epsilon_{j-1}(\mathbf{a}))I(a_0;a_1,\ldots,\widehat{a_j},\ldots,a_n;a_{n+1}).
		\]
		Here, for $j=0,\ldots,n$ and $\mathbf{a}=(a_1,\ldots,a_n)$,
		\begin{align*}
			\epsilon_j(\mathbf{a})=\begin{cases}
				0&~(a_{j+1}=a_j),\\
				\frac{\pdv{}{x}(a_{j+1}-a_j)}{a_{j+1}-a_j}&~(a_{j+1}\neq a_j),
			\end{cases}
		\end{align*}
		and $(a_1,\ldots,\widehat{a_j},\ldots,a_n)=(a_1,\ldots,a_{j-1},a_{j+1},\ldots,a_n)$.
	\end{lemma}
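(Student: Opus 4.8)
The plan is to differentiate the defining integral
\[
I(a_0;a_1,\ldots,a_n;a_{n+1})=\int_{a_0<t_1<\cdots<t_n<a_{n+1}}\prod_{j=1}^{n}\frac{dt_j}{t_j-a_j}
\]
with respect to $x$, separating the dependence of the domain on $a_{n+1}=a_{n+1}(x)$ from that of the integrand on $a_1(x),\ldots,a_n(x)$, and then rewriting each resulting term as a hyperlogarithm with one index deleted, by an integration by parts followed by a partial fraction decomposition. To keep the analytic steps clean I would first reduce, using that $I$ depends continuously on $a_1,\ldots,a_{n+1}$, to the case in which $a_1,\ldots,a_n$ are pairwise distinct; the degenerate configurations with $a_j=a_{j+1}$, where $\epsilon_j(\mathbf{a})=0$, are then recovered by a limiting argument (or handled directly by one further integration by parts), and I suppress them below. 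Throughout, $a_i'$ abbreviates $\partial a_i/\partial x$, and $a_0'=0$ since $a_0\in\R$ is a constant.

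Differentiating under the integral sign (justified by dominated convergence) and using the recursion $I(a_0;\mathbf{a};a_{n+1})=\int_{a_0}^{a_{n+1}}I(a_0;a_1,\ldots,a_{n-1};t_n)\,\frac{dt_n}{t_n-a_n}$, the moving endpoint contributes $\frac{a_{n+1}'}{a_{n+1}-a_n}\,I(a_0;a_1,\ldots,a_{n-1};a_{n+1})$, while differentiation of the integrand produces, for each $j\in\{1,\ldots,n\}$, the term $\int\frac{a_j'}{(t_j-a_j)^2}\prod_{i\ne j}\frac{dt_i}{t_i-a_i}$. For a fixed $j$ I would write $\frac{a_j'}{(t_j-a_j)^2}=-a_j'\,\frac{\partial}{\partial t_j}\frac{1}{t_j-a_j}$, integrate in $t_j$ first over $(t_{j-1},t_{j+1})$ (Fubini, with $t_0:=a_0$ and $t_{n+1}:=a_{n+1}$), and integrate by parts; since the remaining factors do not involve $t_j$, only the boundary survives and one is left with
\[
-a_j'\Bigl(\frac{1}{t_{j+1}-a_j}-\frac{1}{t_{j-1}-a_j}\Bigr)\prod_{i\ne j}\frac{1}{t_i-a_i}
\]
integrated over the remaining $(n-1)$-dimensional simplex.

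Now, when $j-1\ge1$ (resp. $j+1\le n$), the variable $t_{j-1}$ (resp. $t_{j+1}$) carries the product $\frac{1}{(t_{j-1}-a_{j-1})(t_{j-1}-a_j)}$ (resp. $\frac{1}{(t_{j+1}-a_{j+1})(t_{j+1}-a_j)}$), to which I apply the identity $\frac{1}{(t-b)(t-c)}=\frac{1}{b-c}\bigl(\frac{1}{t-b}-\frac{1}{t-c}\bigr)$; after renaming variables, each of the two resulting pieces is exactly a hyperlogarithm $I(a_0;a_1,\ldots,\widehat{a_m},\ldots,a_n;a_{n+1})$ with $m\in\{j-1,j\}$ (resp. $m\in\{j,j+1\}$) — and, thanks to the pairwise distinctness of the $a_i$, with no new or repeated index entries. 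The extreme indices are handled the same way: for $j=1$ the lower boundary has $t_0=a_0$ constant, and for $j=n$ the upper boundary has $t_{n+1}=a_{n+1}$ constant, so no partial fraction is needed there, the latter term combining directly with the endpoint contribution. Finally, fixing $m\in\{1,\ldots,n\}$ and collecting every contribution to the coefficient of $I(a_0;a_1,\ldots,\widehat{a_m},\ldots,a_n;a_{n+1})$ — from $j=m$ (both boundary terms), from $j=m-1$ (upper boundary), from $j=m+1$ (lower boundary), and from the endpoint when $m=n$ — one gets
\[
\frac{a_{m+1}'-a_m'}{a_{m+1}-a_m}+\frac{a_{m-1}'-a_m'}{a_m-a_{m-1}}=\epsilon_m(\mathbf{a})-\epsilon_{m-1}(\mathbf{a})
\]
(here $a_{m+1}:=a_{n+1}$ when $m=n$, and $a_0'=0$), which is the claim.

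The analytic ingredients — differentiation under the integral sign, Fubini's theorem, the integration by parts, and the degenerate limit $a_j\to a_{j+1}$ — are routine. The step that genuinely requires care is the final bookkeeping: one must verify that the boundary terms arising from all $j$ and from both endpoints telescope to precisely $\epsilon_m(\mathbf{a})-\epsilon_{m-1}(\mathbf{a})$, and that, because of the reduction to distinct $a_i$, only hyperlogarithms with a single index removed from the original string are produced.
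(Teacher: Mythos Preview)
The paper does not supply its own proof of this lemma; it is quoted from \cite[Lemma~4.5]{It3} (with further pointers to Goncharov, Hirose--Iwaki--Sato--Tasaka, and Panzer) and used as a black box for the variable-removing calculations in Section~4. Your argument---differentiate the iterated integral, pick up the moving-endpoint contribution from the recursion, rewrite each $(t_j-a_j)^{-2}$ as $-\partial_{t_j}(t_j-a_j)^{-1}$ and integrate by parts in $t_j$ over $(t_{j-1},t_{j+1})$, then split the resulting products $\frac{1}{(t-b)(t-c)}$ by partial fractions and collect---is correct and is precisely the standard proof found in those references; the bookkeeping you outline does telescope to $\epsilon_m(\mathbf a)-\epsilon_{m-1}(\mathbf a)$ in front of $I(a_0;a_1,\ldots,\widehat{a_m},\ldots,a_n;a_{n+1})$.

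One minor remark on the degenerate case: your continuity reduction is unproblematic when $a_j$ and $a_{j+1}$ coincide \emph{identically} as functions of $x$ (since then $a_j'=a_{j+1}'$ and the two $\epsilon_j$-terms cancel before the limit), which is the only situation that occurs in the applications here, where each $a_i$ is one of $0,1,x_l,x_l^{-1}$. If one wanted to cover an isolated coincidence $a_j(x_0)=a_{j+1}(x_0)$, the individual summands $\epsilon_j\,I(\ldots,\widehat{a_j},\ldots)$ and $\epsilon_j\,I(\ldots,\widehat{a_{j+1}},\ldots)$ can diverge separately while their difference stays bounded, so the limit should be taken on that combination (or, as you note, one performs a second integration by parts directly).
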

	
	By Lemma \ref{itlem1}, we can transform hyperlogarithms. For example, consider transforming the hyperlogarithm
	\[
	I(0;x_2^{-1}, x_1^{-1};1).
	\]
	Since
	\[
	\pdv{}{x_2}I(0;x_2^{-1}, x_1^{-1};1)=\frac{1}{x_2-x_1}I(0;1;x_1)+\left(\frac{1}{x_2}-\frac{1}{x_2-x_1}\right)I(0;1;x_2),
	\]
	we obtain
	\[
	I(0;x_2^{-1}, x_1^{-1};1)=I(0;x_1;x_2)I(0;1;x_1)+I(0;1,0;x_2)-I(0;1,x_1;x_2).
	\]
	Ito called this transforming process {\it variable removing}, and we use the same terminology.
	
	\begin{lemma}[{\cite[Lemma 4.6]{It3}}]
		For $x\in\R$ and $x_1,\ldots,x_r\in\Co$ with $0<x<|x_j|<1~(j=1,\ldots,r)$, define $V(x_1,\ldots,x_r;x)$ as the $\Q$-linear space by
		\begin{align*}
			V(x_1,\ldots,x_r;x)=\left\langle{I(0;a_1,\ldots,a_n;1)I(0;b_1,\ldots,b_l;x)} ~\middle|~ {\Large\substack{a_j\in\{0,1,x_j^{-1}\},~ a_1\neq0,~ a_n\neq1,\\ b_j\in\{0,1,x_j\},~ b_1\neq0}}\right\rangle_{\Q}.
		\end{align*}
		Then, for $a_1,\ldots,a_n\in\{0,1,x^{-1},x_1^{-1},\ldots,x_r^{-1}\}$ with $a_1\neq0, a_1\neq1$, we have
		\begin{align*}
			I(0;a_1,\ldots,a_n;1)\in V(x_1,\ldots,x_r;x).
		\end{align*}
		Also, for $b_1,\ldots,b_l\in\{0,1,x,x_1,\ldots,x_r\}$ with $b_1\neq0, b_l\neq x$, we have
		\begin{align*}
			I(0;b_1,\ldots,b_l;x)\in V(x_1,\ldots,x_r;x).
		\end{align*}
	\end{lemma}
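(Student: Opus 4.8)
The plan is to prove both assertions by the \emph{variable removing} technique of Lemma~\ref{itlem1}, taking $x$ as the differentiation variable, keeping $x_1,\ldots,x_r$ fixed, and inducting on the length of the word; throughout, call a hyperlogarithm $I(0;c_1,\ldots,c_m;e)$ \emph{admissible} when $c_1\neq0$ and $c_m\neq e$, the empty word being read as $1$.

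For the first assertion I would set $G(x):=I(0;a_1,\ldots,a_n;1)$ and induct on $n$. If no $a_j$ equals $x^{-1}$, then all letters lie in $\{0,1,x_1^{-1},\ldots,x_r^{-1}\}$ and $G(x)=I(0;a_1,\ldots,a_n;1)\cdot I(0;;x)$ exhibits $G(x)$ as an element of $V(x_1,\ldots,x_r;x)$. If some $a_j=x^{-1}$, first note $\lim_{x\to0^{+}}G(x)=0$, since each factor $1/(t-x^{-1})$ with $t\in(0,1)$ tends to $0$. Then differentiate by $x$ using Lemma~\ref{itlem1}: a short computation shows that every nonzero $\epsilon_k(\mathbf{a})$ equals one of $-\tfrac1x$, $\tfrac1{x-1}-\tfrac1x$, $\tfrac1{x-x_m}-\tfrac1x$, so $\pdv{}{x}G(x)$ becomes a $\Q$-linear combination of products $r_j(x)\,I(0;a_1,\ldots,\widehat{a_j},\ldots,a_n;1)$ in which each $r_j(x)$ is a $\Q$-combination of $\tfrac1x,\tfrac1{x-1},\tfrac1{x-x_1},\ldots,\tfrac1{x-x_r}$. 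Crucially, $r_j(x)$ vanishes whenever deleting $a_j$ would destroy admissibility: one checks $\epsilon_1(\mathbf{a})=\epsilon_0(\mathbf{a})$ when $a_2=0$, and $\epsilon_n(\mathbf{a})=\epsilon_{n-1}(\mathbf{a})$ when $a_{n-1}=1$. Hence each surviving $I(0;a_1,\ldots,\widehat{a_j},\ldots,a_n;1)$ is an admissible hyperlogarithm of length $n-1$ with letters in $\{0,1,x^{-1},x_1^{-1},\ldots,x_r^{-1}\}$, so it lies in $V(x_1,\ldots,x_r;x)$ by the induction hypothesis, uniformly in $x$: it equals $\sum q\,I(0;\mathbf{a}';1)\,I(0;\mathbf{b}';x)$ with $q\in\Q$ and fixed words $\mathbf{a}',\mathbf{b}'$. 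Integrating $\pdv{}{y}G(y)$ from $0$ to $x$ and using the identity $\int_{0}^{x}\frac{1}{y-\beta}\,I(0;\mathbf{b}';y)\,dy=I(0;\mathbf{b}',\beta;x)$ of \cite[Lemma~4.4]{It3}, each contribution becomes $I(0;\mathbf{a}';1)\,I(0;\mathbf{b}',\beta;x)\in V(x_1,\ldots,x_r;x)$, so $G(x)=\int_0^x\pdv{}{y}G(y)\,dy\in V(x_1,\ldots,x_r;x)$.

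For the second assertion I would set $F(x):=I(0;b_1,\ldots,b_l;x)$ and induct on $l$; if no $b_j$ equals $x$ then $F(x)=I(0;;1)\cdot I(0;b_1,\ldots,b_l;x)\in V(x_1,\ldots,x_r;x)$. Otherwise, the rescaling identity $I(0;b_1,\ldots,b_l;x)=I(0;b_1/x,\ldots,b_l/x;1)$ of \cite[Lemma~4.4]{It3} shows that $\lim_{x\to0^{+}}F(x)$ equals $I(0;\tilde b_1,\ldots,\tilde b_l;1)$, where $\tilde b_k=0$ if $b_k=0$ and $\tilde b_k=1$ if $b_k=x$, and equals $0$ as soon as some $b_k\in\{1,x_1,\ldots,x_r\}$; in either case this is an admissible hyperlogarithm with letters in $\{0,1\}$, hence in $V(x_1,\ldots,x_r;x)$. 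Next, differentiate $F$ by $x$ via Lemma~\ref{itlem1}, which permits $x$ to occur both among the letters and as the upper limit; the same analysis produces a $\Q$-linear combination of products $s_j(x)\,I(0;b_1,\ldots,\widehat{b_j},\ldots,b_l;x)$ with each $s_j(x)$ a $\Q$-combination of $\tfrac1x,\tfrac1{x-1},\tfrac1{x-x_1},\ldots,\tfrac1{x-x_r}$, and $s_j(x)$ vanishing for the admissibility-destroying removals ($\epsilon_1=\epsilon_0$ when $b_2=0$, $\epsilon_l=\epsilon_{l-1}$ when $b_{l-1}=x$). The induction hypothesis applies to the length-$(l-1)$ hyperlogarithms and the integration identity above gives $\int_0^x\pdv{}{y}F(y)\,dy\in V(x_1,\ldots,x_r;x)$, so $F(x)=\lim_{y\to0^{+}}F(y)+\int_0^x\pdv{}{y}F(y)\,dy\in V(x_1,\ldots,x_r;x)$.

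The step I expect to be the main obstacle is justifying the term-by-term integration from $x=0$: a coefficient $\tfrac1y$ integrates to the letter $0$, which preserves admissibility only when appended to a nonempty word, so one must rule out a $\tfrac1y$-coefficient ever multiplying the empty $y$-word $I(0;;y)$. If the companion hyperlogarithm still contains the special letter it is $O(y)$ as $y\to0^{+}$, so after discarding (by a $\Q$-linear relation) the $y$-independent part of its inductive expansion it has no empty $y$-word; and the companion is itself $y$-independent only when the removed letter was the unique special one, in which case the two $-\tfrac1y$-contributions of $\epsilon_j$ and $\epsilon_{j-1}$ cancel, so the coefficient carries no $\tfrac1y$ at all. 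Checking these bookkeeping points, together with the uniformity in $x$ of the inductively produced expansions, is the substance of the argument; the remainder is a routine iteration of Lemma~\ref{itlem1} and the integration identity.
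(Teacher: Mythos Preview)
The paper does not give its own proof of this lemma; it is quoted verbatim from \cite[Lemma~4.6]{It3} and used as a black box. So there is no in-paper argument to compare against. Your approach---induct on the word length, differentiate in $x$ via Lemma~\ref{itlem1}, observe that the inadmissible removals carry zero coefficient, apply the induction hypothesis, and integrate back using \cite[Lemma~4.4]{It3}---is exactly the variable-removing technique that the paper sketches in the paragraph following Lemma~\ref{itlem1} and that Ito employs in \cite{It3}. In that sense your proposal is aligned with the intended method.

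Your outline is essentially correct, and you have correctly isolated the one genuine subtlety: when the inductive expansion of a shortened hyperlogarithm contributes a term with empty $y$-word, a coefficient $\tfrac{1}{y}$ would integrate to the inadmissible $I(0;0;x)$. Your two-case resolution is the right idea. In the case where the removed letter was the \emph{unique} special letter, your cancellation claim is valid: writing $a_j=y^{-1}$ (resp.\ $b_j=y$) with constant neighbours, one checks that both $\epsilon_j$ and $\epsilon_{j-1}$ contribute $-\tfrac{1}{y}$ in their partial-fraction decompositions, so $\epsilon_j-\epsilon_{j-1}$ has no $\tfrac{1}{y}$ part. In the other case, where the shortened hyperlogarithm still contains the special letter, your argument that its limit as $y\to0^{+}$ vanishes (hence the constant, empty-$y$-word part of its $V$-expansion must be a $\Q$-linear relation and can be dropped) is also sound, since every generator $I(0;\mathbf{b}';y)$ of $V$ with nonempty $\mathbf{b}'$ tends to $0$ as $y\to0^{+}$. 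One point worth making explicit in the second assertion: the ``companion'' $I(0;b_1,\ldots,\widehat{b_j},\ldots,b_l;y)$ is never literally $y$-independent because of the upper limit, so the dichotomy should be phrased in terms of whether its inductive $V$-expansion has a nonzero empty-$\mathbf{b}'$ component, and the vanishing-limit argument handles this uniformly. With that clarification, the proof goes through.
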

	
	\begin{lemma}[{\cite[Proposition 4.8]{It3}}]\label{itlem2}
		Let $n,l\in\Z_{\geq0}, a_j\in\{0,1,x_1^{-1},\ldots,x_r^{-1}\}~(j=1,\ldots,n)$ with $a_1\neq0, a_n\neq1$, $b_j\in\{0,1,x_1,\ldots,x_r\}~(j=1,\ldots,l)$ with $b_1\neq0, b_l\neq x_r$, and $c_j\in\{0,1\}~(j=2,\ldots,r)$. Suppose that for all $j\in\{1,\ldots,r\}$, there exists some $v\in\{1,\ldots,n\}$ such that $x_j^{-1}=a_v$ or $x_j=b_v$. Then we have
		\[
		\int_{0<x_r<\cdots<x_1<1}^{} I(0;a_1,\ldots,a_n;1)I(0;b_1,\ldots,b_l;x_r)\frac{dx_1}{x_1}\prod_{j=2}^{r}\frac{dx_j}{x_j-c_j}\in\mathcal{Z}_{n+l+r}.
		\]
	\end{lemma}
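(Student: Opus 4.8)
The plan is to argue by induction on $r$, integrating out the largest variable $x_r$ at each step. Before the induction one checks that the integral converges absolutely: the conditions $a_1\neq0$, $b_1\neq0$ and $b_l\neq x_r$, together with the hypothesis that every $x_j$ occurs among the letters (as $x_j^{-1}$ in $a$ or as $x_j$ in $b$), force $I(0;a_1,\ldots,a_n;1)$ and $I(0;b_1,\ldots,b_l;x_r)$ to vanish fast enough as the $x_j$ approach the boundary of $\{0<x_r<\cdots<x_1<1\}$ to absorb the singular factors $x_1^{-1}$ and $(x_j-c_j)^{-1}$ of the measure; absolute convergence then legitimizes the use of Fubini below.

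For the inductive step I would apply the variable-removing lemma (the lemma above concerning $V(x_1,\ldots,x_r;x)$), with $x=x_r$ and generic variables $x_1,\ldots,x_{r-1}$, to \emph{both} hyperlogarithms: its first part places $I(0;a_1,\ldots,a_n;1)$ in $V(x_1,\ldots,x_{r-1};x_r)$ and its second part places $I(0;b_1,\ldots,b_l;x_r)$ in $V(x_1,\ldots,x_{r-1};x_r)$. Thus each becomes a $\Q$-linear combination of products $I(0;\alpha;1)\,I(0;\gamma;x_r)$ from which $x_r$ has been removed as a letter: the letters of the $I(0;\alpha;1)$-factors lie in $\{0,1,x_1^{-1},\ldots,x_{r-1}^{-1}\}$ with $\alpha_1\neq0$ and last letter $\neq1$, while those of the $I(0;\gamma;x_r)$-factors lie in $\{0,1,x_1,\ldots,x_{r-1}\}$ with $\gamma_1\neq0$. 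Multiplying the two combinations and collapsing the resulting products of four hyperlogarithms by two applications of the shuffle product for iterated integrals (which respects a common endpoint) yields
\[
I(0;a_1,\ldots,a_n;1)\,I(0;b_1,\ldots,b_l;x_r)=\sum_i q_i\,I(0;A_i;1)\,I(0;B_i;x_r),
\]
with $q_i\in\Q$, the words $A_i,B_i$ inheriting the letter-sets and the conditions on first and last letters above, $B_i$ non-empty because $x_r$ genuinely occurs, and $|A_i|+|B_i|=n+l$ throughout, since weight is preserved by variable removing and by shuffling.

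Now $I(0;A_i;1)$ is free of $x_r$ and $I(0;B_i;x_r)$ depends on $x_r$ only through its endpoint, so by Fubini the $x_r$-integration can be carried out first, and the integration lemma above gives $\int_0^{x_{r-1}}I(0;B_i;x_r)\,\tfrac{dx_r}{x_r-c_r}=I(0;B_i,c_r;x_{r-1})$. Hence the integral in the statement equals $\sum_i q_i$ times the integral of $I(0;A_i;1)\,I(0;B_i,c_r;x_{r-1})$ over $\{0<x_{r-1}<\cdots<x_1<1\}$ against $\tfrac{dx_1}{x_1}\prod_{j=2}^{r-1}\tfrac{dx_j}{x_j-c_j}$; as $c_r\in\{0,1\}$ each of these is again of the shape in the statement with $r$ replaced by $r-1$, with first word $A_i$ and second word $B_i,c_r$ of length $|B_i|+1$, so the target weight $|A_i|+(|B_i|+1)+(r-1)=n+l+r$ is unchanged and the induction hypothesis applies. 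The process terminates at $r=1$ --- the stage with no remaining $x$-variable and outer endpoint $1$ --- leaving $\sum_i q_i\,I(0;A_i;1)\,I(0;B_i,0;1)$ with $A_i,B_i$ words in $\{0,1\}$; the inherited conditions make both hyperlogarithms admissible, hence equal to $\pm$ multiple zeta values of weights $|A_i|$ and $|B_i|+1$, and $\mathcal{Z}_w\cdot\mathcal{Z}_{w'}\subseteq\mathcal{Z}_{w+w'}$ completes the base case.

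The step I expect to be most delicate is the careful propagation of the admissibility data --- first letter $\neq0$, last letter $\neq$ the current endpoint, non-emptiness of the $B$-words --- through the $V$-lemma decompositions, the shuffle products and the integration-lemma applications, so that every hyperlogarithm written down is well-defined, every use of the integration lemma is valid (including the borderline case where a letter coincides with the endpoint, which one checks does no harm), and every sub-integral produced by the reduction is again absolutely convergent; in particular one should run the induction on a slightly more generous family of integrals (governed only by the first/last-letter conditions together with absolute convergence), since the reduction need not preserve the occurrence hypothesis verbatim. This bookkeeping is exactly what the hypotheses of the lemma are arranged to make go through.
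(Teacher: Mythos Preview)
The paper does not give its own proof of this lemma: it is quoted from \cite[Proposition~4.8]{It3} and used as a black box, so there is no in-paper argument to compare your sketch against. That said, your inductive outline --- remove the innermost variable $x_r$ from both hyperlogarithms via the $V$-lemma (the preceding \cite[Lemma~4.6]{It3}), shuffle the resulting factors back into products of two hyperlogarithms, integrate out $x_r$ with the integration lemma, and descend --- is precisely the route the surrounding lemmas in the paper are arranged to support, and is the method Ito uses in \cite{It3}. In that sense your proposal matches the intended proof.

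The one place where your sketch is still genuinely incomplete is the point you yourself flag in the last paragraph. After the $V$-lemma and the shuffle, the second factor $I(0;B_i;x_r)$ may end in the letter $x_{r-1}$, while the integration lemma as stated in the paper explicitly requires the last letter to differ from the new endpoint $x_{r-1}$; your parenthetical ``which one checks does no harm'' is where the actual work sits (an extra variable-removing step, or a regularized-limit argument, is needed there). Likewise, your remark that the induction should be run on a slightly larger family governed by convergence rather than by the verbatim occurrence hypothesis is correct and necessary, since after one integration the new word $(B_i,c_r)$ need not contain every remaining $x_j$. Both points are real but standard, and once handled your argument goes through.
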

	These lemmas are also needed in Section 3. 
	
	\section{Analytic continuations of ${\mm{Li}}_{\mathbf{u}_r}^{\textup{\foreignlanguage{russian}{\cyrsh}}}(\mathbf{z}_r;\mathbf{a}_r)$ and $\eta(\mathbf{u}_r;\mathbf{s}_r;\sigma;\mathbf{a}_r;\mathbf{b}_r)$}
	
	In this section, we show analytic continuations of ${\mm{Li}}_{\mathbf{u}_r}^{\textup{\foreignlanguage{russian}{\cyrsh}}}(\mathbf{z}_r;\mathbf{a}_r)$ and $\eta(\mathbf{u}_r;\mathbf{s}_r;\sigma;\mathbf{a}_r;\mathbf{b}_r)$.

	\begin{lemma}[{\cite[p. 2]{Ya}}]\label{zerodenai}
		For each $\mathbf{z}_r\in(\Co\setminus\R_{\geq1})^r$, there exists a neighborhood $K$ of $\mathbf{z}_r$ and $\varepsilon_0\in\R_{>0}$ such that $e^{x_j+\cdots+x_r}-z_j^{\prime}\neq0$
		for $j=1,\ldots,r$ whenever $\mathbf{z}_r\in K, 0<\varepsilon<\varepsilon_0$ and $x_1,\ldots, x_r\in\mathcal{C}_{\varepsilon}$. 
	\end{lemma}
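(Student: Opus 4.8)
The plan is to combine the explicit description of $\mathcal{C}_\varepsilon$ as a set of points with an elementary bound on the exponential. As a point set, $\mathcal{C}_\varepsilon$ is the union of the ray $[\varepsilon,\infty)\subset\R$ and the circle $\{x\in\Co:|x|=\varepsilon\}$. Hence, for any $x_1,\ldots,x_r\in\mathcal{C}_\varepsilon$ and any $j$, the partial sum $w_j:=x_j+\cdots+x_r$ decomposes as $w_j=a+b$, where $a\in\R_{\ge 0}$ is the sum of those $x_i$ ($j\le i\le r$) lying on the ray and $b$ is the sum of those lying on the circle, so that $|b|\le (r-j+1)\varepsilon\le r\varepsilon$. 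Since $e^{w_j}=e^{a}e^{b}$ and $e^{a}\in\R_{\ge 1}$, the point $e^{w_j}$ lies within distance $e^{a}(e^{r\varepsilon}-1)$ of $\R_{\ge 1}$. The only subtlety is that $a$ is not bounded (the contour runs out to $+\infty$), so one cannot conclude outright that $e^{w_j}$ is close to $\R_{\ge 1}$; instead one splits according to whether $a$ is large or not.

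Quantitatively, first I would choose $K$ and $\varepsilon_0$ as follows. Since each $z_j\notin\R_{\ge 1}$ and $\R_{\ge 1}$ is closed, $d:=\min_{1\le j\le r}\mm{dist}(z_j,\R_{\ge 1})>0$. Take $K=\prod_{j=1}^{r}\{z':|z'-z_j|<d/4\}$; this is a bounded neighbourhood of $\mathbf{z}_r$, so $M:=\sup_{\mathbf{z}_r'\in K}\max_{1\le j\le r}|z_j'|<\infty$, and every $\mathbf{z}_r'=(z_1',\ldots,z_r')\in K$ satisfies $\mm{dist}(z_j',\R_{\ge 1})\ge 3d/4$ for all $j$. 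Put $A:=4M+1$ and pick $\varepsilon_0>0$ so small that $2-e^{r\varepsilon_0}\ge\tfrac12$ and $A(e^{r\varepsilon_0}-1)\le d/4$.

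Now for $0<\varepsilon<\varepsilon_0$, $\mathbf{z}_r'\in K$, $x_1,\ldots,x_r\in\mathcal{C}_\varepsilon$ and $1\le j\le r$, I would argue by the dichotomy on $a$, using $|e^{b}-1|\le e^{|b|}-1\le e^{r\varepsilon}-1$ (so $|e^{b}|\ge 2-e^{r\varepsilon}\ge\tfrac12$). If $e^{a}\ge A$, then $|e^{w_j}|=e^{a}|e^{b}|\ge A(2-e^{r\varepsilon})\ge A/2>M\ge|z_j'|$, whence $e^{w_j}\ne z_j'$. If $e^{a}<A$, then $|e^{w_j}-e^{a}|=e^{a}|e^{b}-1|\le A(e^{r\varepsilon}-1)\le d/4$, and since $e^{a}\in\R_{\ge 1}$ the triangle inequality gives $|z_j'-e^{w_j}|\ge|z_j'-e^{a}|-d/4\ge\mm{dist}(z_j',\R_{\ge 1})-d/4\ge d/2>0$, whence again $e^{w_j}\ne z_j'$. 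Since all estimates are uniform in $j$, in $\varepsilon\in(0,\varepsilon_0)$ and in $\mathbf{z}_r'\in K$, this gives the claim.

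I do not expect a genuine obstacle: the lemma is of ``warm-up'' type and reduces to the two standard facts that $\R_{\ge 1}$ is closed and that $|e^{b}-1|\le e^{|b|}-1$. The only place where a little thought is required is the unboundedness of the partial sums $w_j$, handled above by separating the case where the ray-contribution $a$ is large and using that $K$ was chosen bounded.
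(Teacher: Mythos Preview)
Your argument is correct. The paper itself does not supply a proof of this lemma; it is simply quoted from Yamamoto's paper (the citation \cite[p.~2]{Ya}) and used as a black box in the proof of Lemma~\ref{symfivelemma}, so there is no in-paper argument to compare against. Your decomposition $w_j=a+b$ into the ray-contribution $a\in\R_{\ge 0}$ and the circle-contribution $b$ with $|b|\le r\varepsilon$, followed by the dichotomy on the size of $e^{a}$, is a clean and fully explicit way to handle the unboundedness of the contour; the bounds $|e^{b}-1|\le e^{|b|}-1$ and $|e^{b}|\ge 2-e^{|b|}$ you invoke are standard, and the choices of $K$, $M$, $A$ and $\varepsilon_0$ give the required uniformity in $j$, in $\varepsilon\in(0,\varepsilon_0)$, and in $\mathbf{z}_r'\in K$.
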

	
	\begin{lemma}\label{symfivelemma}
		For $\mathbf{u}_r, \mathbf{z}_r\in\Co^r$ with $|z_j|<1$, $\mathbf{a}_r\in\Z_{\geq0}^{r}$ with $a_1=1$ and sufficiently small $\varepsilon\in\R_{>0}$, we have
		\begin{align}\label{contourintli}
			{\mm{Li}}_{\mathbf{u}_r}^{\textup{\foreignlanguage{russian}{\cyrsh}}}(\mathbf{z}_r;\mathbf{a}_r)=\prod_{j=1}^{r}\frac{1}{(e^{2\pi iu_j}-1)\Gamma(u_j)}\int_{(\mathcal{C}_\varepsilon)^r}\prod_{j=1}^{r}\frac{x_j^{u_j-1}e^{(1-a_j)(x_j+\cdots+x_r)}z_j^{a_j}}{e^{x_j+\cdots+x_r}-z_j}dx_j.
		\end{align}
		In particular, ${\mm{Li}}_{\mathbf{u}_r}^{\cy}(\mathbf{z}_r;\mathbf{a}_r)$ can be continued analytically to the region $(\mathbf{u}_r,\mathbf{z}_r)\in\Co^r\times(\Co\setminus\R_{\geq1})^r$ .
	\end{lemma}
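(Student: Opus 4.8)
The plan is to follow the proof of Yamamoto's integral representation \eqref{analy} for ${\mm{Li}}_{\mathbf{u}_r}^{\cy}(\mathbf{z}_r)$; the only new feature is that the summation in \eqref{deffivepoly} starts at $l_j=a_j$ rather than at $l_j=1$, which produces the extra factors $z_j^{a_j}$ and $e^{(1-a_j)(x_j+\cdots+x_r)}$ in the integrand. The single-variable input is the Hankel-type identity
\[
\int_{\mathcal{C}_\varepsilon}x^{u-1}e^{-mx}\,dx=(e^{2\pi iu}-1)\,\Gamma(u)\,m^{-u}\qquad(u\in\Co,\ m\in\R_{>0}),
\]
which one proves for $\mm{Re}(u)>0$ by collapsing $\mathcal{C}_\varepsilon$ onto the positive real axis (the circle $C(0;\varepsilon)$ contributing $O(\varepsilon^{\mm{Re}(u)})\to0$) together with $\Gamma(u)m^{-u}=\int_0^\infty x^{u-1}e^{-mx}\,dx$, and then for all $u\in\Co$ by analytic continuation, both sides being entire in $u$; via the reflection formula $\Gamma(u)(e^{2\pi iu}-1)=2\pi ie^{\pi iu}/\Gamma(1-u)$ this matches the normalising constant appearing in \eqref{analy}.

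Next I would fix $\mathbf{z}_r$ with $|z_j|<1$ and pick $\varepsilon>0$ small enough that Lemma \ref{zerodenai} applies and that $|z_j|\,e^{r\varepsilon}<1$ for every $j$. Since $\mm{Re}(x_j+\cdots+x_r)\geq-r\varepsilon$ on $(\mathcal{C}_\varepsilon)^r$, this forces $\bigl|z_je^{-(x_j+\cdots+x_r)}\bigr|<1$ there, so
\[
\frac{z_j^{a_j}\,e^{(1-a_j)(x_j+\cdots+x_r)}}{e^{x_j+\cdots+x_r}-z_j}=\sum_{l_j=a_j}^{\infty}z_j^{l_j}\,e^{-l_j(x_j+\cdots+x_r)}
\]
converges uniformly on $(\mathcal{C}_\varepsilon)^r$. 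Substituting this into the right-hand side of \eqref{contourintli}, using the identity $\sum_{j=1}^{r}l_j(x_j+\cdots+x_r)=\sum_{j=1}^{r}(l_1+\cdots+l_j)\,x_j$, and interchanging summation and integration, the integral over $(\mathcal{C}_\varepsilon)^r$ factorises, for each $(l_1,\ldots,l_r)$, into $\prod_{j=1}^{r}\int_{\mathcal{C}_\varepsilon}x_j^{u_j-1}e^{-m_jx_j}\,dx_j$ with $m_j:=l_1+\cdots+l_j$. Because $a_1=1$ we have $m_j\geq m_1=l_1\geq1>0$, so the Hankel identity applies and this product equals $\prod_{j=1}^{r}(e^{2\pi iu_j}-1)\Gamma(u_j)\,m_j^{-u_j}$; dividing by $\prod_j(e^{2\pi iu_j}-1)\Gamma(u_j)$ and summing over $l_1\geq a_1,\ldots,l_r\geq a_r$ recovers exactly the series \eqref{deffivepoly}, which establishes \eqref{contourintli} once the interchange is justified.

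The interchange of summation and integration --- the only place where the smallness of $\varepsilon$ is genuinely needed --- I would justify by an absolute-convergence majorant: on each ray of $\mathcal{C}_\varepsilon$ one has $\mm{Re}(x_j)\geq\varepsilon$, so $\int_{\varepsilon}^{\infty}x_j^{\mm{Re}(u_j)-1}e^{-m_jx_j}\,dx_j$ is finite with at most polynomial growth in $m_j$, while on the circle $|x_j^{u_j-1}|$ is bounded in terms of $(\varepsilon,u_j)$ alone and $|e^{-m_jx_j}|\leq e^{m_j\varepsilon}$; since $\sum_{j}m_j\leq r\sum_{j}l_j$, the $(l_1,\ldots,l_r)$-term is majorised by $P(l_1,\ldots,l_r)\prod_{j}(|z_j|e^{r\varepsilon})^{l_j}$ for some polynomial $P$, which is summable by the choice of $\varepsilon$. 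Combined with the fact that the Hankel identity holds for all $u\in\Co$, this gives \eqref{contourintli} for every $\mathbf{u}_r\in\Co^r$.

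For the analytic continuation: for a fixed small $\varepsilon$ the right-hand side of \eqref{contourintli} is holomorphic in $(\mathbf{u}_r,\mathbf{z}_r)$ on $\Co^r\times K$, where $K$ is the neighbourhood of Lemma \ref{zerodenai} (one differentiates under the integral sign, using Lemma \ref{zerodenai} to keep the denominators away from $0$), and by the same contour-deformation argument as in \cite{Ya} its value is independent of $\varepsilon$ in the admissible range; hence these local expressions glue to a function holomorphic on $\Co^r\times(\Co\setminus\R_{\geq1})^r$ which agrees with ${\mm{Li}}_{\mathbf{u}_r}^{\cy}(\mathbf{z}_r;\mathbf{a}_r)$ on $\{|z_j|<1\}$, and this is the asserted continuation. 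I do not expect any substantive obstacle beyond this convergence bookkeeping, which runs parallel to \cite{Ya}; the one point that must be respected is that the shifted range $l_j\geq a_j$ keeps each $m_j=l_1+\cdots+l_j$ strictly positive, which is precisely the role of the hypothesis $a_1=1$.
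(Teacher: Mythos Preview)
Your argument is correct, but it is organised differently from the paper's own proof. The paper first restricts to $\mm{Re}(u_j)>1$, writes the defining series \eqref{deffivepoly} as the real integral
\[
\prod_{j=1}^{r}\frac{1}{\Gamma(u_j)}\int_{(0,\infty)^r}\prod_{j=1}^{r}\frac{x_j^{u_j-1}e^{(1-a_j)(x_j+\cdots+x_r)}z_j^{a_j}}{e^{x_j+\cdots+x_r}-z_j}\,dx_j,
\]
and then introduces the contour integral $H(\mathbf{u}_r)$ over $(\mathcal{C}_\varepsilon)^r$, decomposes each factor of the contour into the ray $(\varepsilon,\infty)$ and the circle $C(0;\varepsilon)$, and uses the bounds $\bigl|1/(e^x-z)\bigr|\ll e^{-\frac12\mm{Re}(x)}$ and $e^x/(e^x-z)=O(1)$ to see that every piece containing at least one circle is $O(\varepsilon^{\mm{Re}(u_j)})\to0$; this identifies $H(\mathbf{u}_r)$ with $\prod_j(e^{2\pi iu_j}-1)$ times the real integral, and finally the same bounds show $H(\mathbf{u}_r)$ is absolutely convergent and hence entire in $\mathbf{u}_r$, giving the continuation.

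You instead expand each factor of the \emph{contour} integrand as a geometric series (legitimate on $(\mathcal{C}_\varepsilon)^r$ once $|z_j|e^{r\varepsilon}<1$), swap sum and integral via your polynomial-times-geometric majorant, and evaluate term by term with the single-variable Hankel identity, which already holds for all $u\in\Co$. This buys you \eqref{contourintli} for every $\mathbf{u}_r$ in one stroke, without the intermediate real-axis representation or the preliminary restriction $\mm{Re}(u_j)>1$; the price is that your convergence bookkeeping depends on the choice $|z_j|e^{r\varepsilon}<1$, whereas the paper's decay estimates on the integrand are uniform over compact subsets of $(\Co\setminus\R_{\geq1})^r$ and so make the analytic continuation in $\mathbf{z}_r$ slightly more transparent. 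Both routes are standard and closely related --- your Hankel identity is itself proved by the same contour-collapse the paper performs in $r$ variables --- and your observation that the hypothesis $a_1=1$ is exactly what forces $m_j=l_1+\cdots+l_j\geq1$ is the same point implicitly underlying the paper's bounds.
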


	\begin{proof}
		The argument in this proof is similar to that in \cite[Lemma 2.1]{Ya}. For $j=1,\ldots, r$, fix $z_j\in\Co\setminus\R_{\geq1}$ and suppose $\mm{Re}(u_j)>1$. By the series expression, we have
		\begin{align*}
			{\mm{Li}}_{\mathbf{u}_r}^{\cy}(\mathbf{z}_r;\mathbf{a})&=\sum_{\substack{l_j=a_j\\j=1,\ldots,r}}^{\infty}\frac{z_1^{l_1}\cdots z_r^{l_r}}{l_1^{u_1}\cdots(l_1+\cdots+l_r)^{u_r}}\\
			&=\prod_{j=1}^{r}\frac{1}{\Gamma(u_j)}\int_{(0,\infty)^r}^{}\prod_{j=1}^{r}\frac{x_j^{u_j-1}e^{(1-a_j)(x_j+\cdots+x_r)}z_j^{a_j}}{e^{x_j+\cdots+x_r}-z_j}dx_1\cdots dx_r.
		\end{align*}
		Hence we obtain the analytic continuation of ${\mm{Li}}_{\mathbf{u}_r}^{\cy}(\mathbf{z}_r;\mathbf{a}_r)$ to the region
		\[
		(\mathbf{u}_r,\mathbf{z}_r)\in\{u\in\Co \mid \mm{Re}(u)>1\}^r\times(\Co\setminus\R_{\geq1})^r.
		\] 
		Here, we define $H(\mathbf{u}_r)$ by
		\begin{align*}
			H(\mathbf{u}_r)&=\int_{(\mathcal{C}_{\varepsilon})^r}^{}\prod_{j=1}^{r}\frac{x_j^{u_j-1}e^{(1-a_j)(x_j+\cdots+x_r)}z_j^{a_j}}{e^{x_j+\cdots+x_r}-z_j}dx_j\\
			&=\sum_{\substack{I_1,\ldots,I_r\\=C(0;\varepsilon)\mm{or}(\varepsilon,\infty)}}^{}\prod_{j:I_j=(\varepsilon,\infty)}^{}(e^{2\pi iu_j}-1)\int_{I_1\times\cdots\times I_r}^{} \prod_{j=1}^{r}\frac{x_j^{u_j-1}e^{(1-a_j)(x_j+\cdots+x_r)}z_j^{a_j}}{e^{x_j+\cdots+x_r}-z_j}dx_j,
		\end{align*}
		where the product $\prod_{j:I_j=(\varepsilon,\infty)}^{}$ runs over all $j$ with $I_j=(\varepsilon,\infty)$.
		By Lemma \ref{zerodenai}, we can assume that $e^{x_j+\cdots+x_r}-z_j\neq0$. Furthermore, for $x\in\{\alpha\in\Co \mid -\varepsilon\leq\mm{Im}(\alpha)\leq\varepsilon, -\varepsilon\leq\mm{Re}(\alpha)\}$, we have
		\begin{align*}
			\left|\frac{1}{e^x-z}\right|&\ll e^{-\frac{1}{2}\mm{Re}(x)},\\
			\frac{e^x}{e^x-z}&=O(1).
		\end{align*}
		Therefore, except for the case $I_1=\cdots=I_r=(\varepsilon,\infty)$, we have
		\begin{align*}
			\left|\int_{I_1\times\cdots\times I_r}^{} \prod_{j=1}^{r}\frac{x_j^{u_j-1}e^{(1-a_j)(x_j+\cdots+x_r)}z_j^{a_j}}{e^{x_j+\cdots+x_r}-z_j}dx_j\right|
			&\leq M_1\int_{I_1\times\cdots\times I_r}^{} \prod_{j=1}^{r}\left|\frac{x_j^{u_j-1}}{e^{\frac{1}{2}x_j}}\right|\prod_{j:I_j=C(0;\varepsilon)}^{}|dx_j|\prod_{l:I_l=(\varepsilon,\infty)}^{}dx_l\\
			&\leq M_2\prod_{I_j=C(0;\varepsilon)}^{}\varepsilon^{\mm{Re}(u_j)}\\
			&\rightarrow 0~(\varepsilon\rightarrow 0).
		\end{align*}
		Here, $M_2$ is the constant which depends on all $u_j$. Hence we obtain
		\[
		H(\mathbf{u}_r)=\prod_{j=1}^{r}(e^{2\pi iu_j}-1)\int_{(0,\infty)^r}^{}\frac{x_j^{u_j-1}e^{(1-a_j)(x_j+\cdots+x_r)}z_j^{a_j}}{e^{x_j+\cdots+x_r}-z_j}dx_j.
		\]
		On the other hand, since
		\begin{align*}
			&\left|\int_{(\mathcal{C}_{\varepsilon})^r}^{}\frac{x_j^{u_j-1}e^{(1-a_j)(x_j+\cdots+x_r)}z_j^{a_j}}{e^{x_j+\cdots+x_r}-z_j}dx_j\right|\\
			\leq&M_1\sum_{\substack{I_1,\ldots,I_r\\=C(0;\varepsilon)\mm{or}(\varepsilon,\infty)}}^{}\prod_{j:I_j=(\varepsilon,\infty)}^{}\int_{I_j}^{}\left|\frac{x_j^{u_j-1}}{e^{\frac{1}{2}x_j}}\right|dx_j\prod_{l:I_l=C(0;\varepsilon)}^{}\int_{I_l}^{}\left|\frac{x_l^{u_l-1}}{e^{\frac{1}{2}x_l}}\right||dx_l|,
		\end{align*}
		$H(\mathbf{u}_r)$ converges absolutely. Hence, for $\mathbf{u}_r\in\Co^r$, we have
		\begin{align*}
			{\mm{Li}}_{\mathbf{u}_r}^{\cy}(\mathbf{z}_r;\mathbf{a})&=\prod_{j=1}^{r}\frac{1}{(e^{2\pi iu_j}-1)\Gamma(u_j)}\int_{(\mathcal{C}_{\varepsilon})^r}^{}\prod_{j=1}^{r}\frac{x_j^{u_j-1}e^{(1-a_j)(x_j+\cdots+x_r)}z_j^{a_j}}{e^{x_j+\cdots+x_r}-z_j}dx_j,
		\end{align*}
		and this gives the analytic continuation of ${\mm{Li}}_{\mathbf{u}_r}^{\cy}(\mathbf{z}_r;\mathbf{a})$ to the region
		\[
		(\mathbf{u}_r, \mathbf{z}_r)\in\Co^r\times(\Co\setminus\R_{\geq1})^r.
		\] 
	\end{proof}
	
	\begin{remark}
		In \cite[Theorem 3.17]{Kom1}, Komori defined 
		\begin{align*}
			\zeta(\boldsymbol{\xi}, \mathbf{d}, \mathbf{b}, \mathbf{s})&=\prod_{j=1}^{N}\frac{1}{\Gamma(s_j)}\prod_{t\in S}^{}\frac{1}{e^{2\pi it(\mathbf{s})}-1}\\
			&\five \times\int_{\widehat{\Sigma}}^{} \frac{e^{(b_{11}+\cdots+b_{1R}-d_1)z_1}\cdots e^{(b_{N1}+\cdots+b_{NR}-d_N)z_N}z_1^{s_1-1}\cdots z_N^{s_N-1}}{(e^{z_1b_{11}+\cdots+z_Nb_{N1}}-e^{\xi_1})\cdots(e^{z_1b_{1R}+\cdots+z_Nb_{NR}-e^{\xi_r}})} dz_1\wedge\cdots \wedge dz_r
		\end{align*}
		for $N, R\in\Z_{\geq1}, \boldsymbol{\xi}=(\xi_1,\ldots,\xi_R)\in(\Co/2\pi i\Z)^R, \mathbf{d}=(d_1,\ldots,d_N), \mathbf{s}=(s_1,\ldots,s_N)\in\Co^N, \mathbf{b}=(b_{nr})_{1\leq n \leq N, 1\leq r \leq R}\in\Co^{N\times R}, S:$ a set of linear functionals on $\Co^N$ and $\widehat{\Sigma}:$ a
		union of smooth surfaces. Also, he gives the analytic continuation of $\zeta(\boldsymbol{\xi}, \mathbf{a}, \mathbf{b}, \mathbf{s})$. Lemma \ref{symfivelemma} is the case $N=R=r, b_{ij}=1, 
		d_j=r-\sum_{l=1}^{j}(1-a_l)$ and 
		$S=\{t_j:\Co^N \rightarrow \Co^N | t_j(s_1,\ldots,s_r)=s_j\}$.
	\end{remark}
	
	\begin{proposition}\label{propcontourinteta}
		For $\mathbf{u}_r, \mathbf{s}_r\in\Co^r, \sigma\in\mathfrak{S}_r, \mathbf{a}_r\in\Z_{\geq0}^r$ with 
		$a_1,a_{\sigma^{-1}(1)}\geq1$ and $\mathbf{b}_r\in\Z_{\geq0}^r$ with $b_1,b_{\sigma^{-1}(1)}\geq1$ and $a_{\sigma(j)}+b_j\geq1$, the function $\eta(\mathbf{u}_r;\mathbf{s}_r;\sigma;\mathbf{a}_r;\mathbf{b}_r)$ has the integral expression
		\begin{align}\label{contourinteta}
			\begin{split}
				&\eta(\mathbf{u}_r;\mathbf{s}_r;\sigma;\mathbf{a}_r;\mathbf{b}_r)\\
				=&\prod_{j=1}^{r}\frac{1}{(e^{2\pi iu_j}-1)(e^{2\pi is_j}-1)\Gamma(u_j)\Gamma(s_j)}\int_{\mathcal{C}^{2r}}^{}\prod_{j=1}^{r}t_j^{s_j-1}x_j^{u_j-1}
				\frac{e^{(1-a_{\sigma(j)})(t_{\sigma(j)}+t_{\sigma(j)+1}+\cdots+t_r)}e^{(1-b_j)(x_j+\cdots+x_r)}}{e^{t_{\sigma(j)}+t_{\sigma(j)+1}+\cdots+t_r}+e^{x_j+\cdots+x_r}-1} 
				dt_jdx_j.
			\end{split}
		\end{align}
		In particular, the function $\eta(\mathbf{u}_r;\mathbf{s}_r;\sigma;\mathbf{a}_r;\mathbf{b}_r)$ can be continued analytically to the region
		\[
		(\mathbf{u}_r,\mathbf{s}_r)\in\Co^{2r}.
		\]
	\end{proposition}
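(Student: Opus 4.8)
The plan is to mimic the proof of Lemma~\ref{symfivelemma}, now carrying both the $\mathbf{x}$-variables (coming from the contour integral representation \eqref{contourintli} of ${\mm{Li}}_{\mathbf{u}_r}^{\cy}$) and the $\mathbf{t}$-variables (the Arakawa--Kaneko integration variables) simultaneously. First I would start from the defining integral \eqref{intfiveeta}, valid for $\mm{Re}(u_j), \mm{Re}(s_j)>0$ with $\mm{Re}(u_j)$ large enough that \eqref{contourintli} applies on the positive real axis; substituting the real-axis version of \eqref{contourintli} for ${\mm{Li}}_{\mathbf{u}_r}^{\cy}(\,\cdot\,;\mathbf{b}_r)$ and expanding $1/(e^{x}-z)^{b}$-type factors appropriately, one combines the geometric-series kernel with the factors $(1-e^{t_{\sigma(j)}+\cdots+t_r})^{-b_j}$ coming from \eqref{intfiveeta}. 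The key algebraic identity is that after plugging in $z_j = 1-e^{t_{\sigma(j)}+t_{\sigma(j)+1}+\cdots+t_r}$, the denominator $e^{x_j+\cdots+x_r}-z_j$ becomes $e^{x_j+\cdots+x_r}+e^{t_{\sigma(j)}+\cdots+t_r}-1$, and the numerator powers $z_j^{b_j}$ cancel against the $(1-e^{t_{\sigma(j)}+\cdots+t_r})^{-b_j}$; together with the exponential prefactors $e^{(1-a_{\sigma(j)})(\cdots)}$ and $e^{(1-b_j)(x_j+\cdots+x_r)}$ one recovers the integrand displayed in \eqref{contourinteta} on the real locus $(0,\infty)^{2r}$.

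Next I would deform each real-axis integration path, in both the $t$-variables and the $x$-variables, to the Hankel-type contour $\mathcal{C}_\varepsilon$. As in Lemma~\ref{symfivelemma}, expanding $\int_{(\mathcal{C}_\varepsilon)^{2r}}$ into the $3^{2r}$ (really $2^{2r}$, since each of the $2r$ contours is either the small circle $C(0;\varepsilon)$ or the ray $(\varepsilon,\infty)$, traversed twice) sub-integrals, every sub-integral in which at least one path is the small circle contributes a factor $\varepsilon^{\mm{Re}(u_j)}$ or $\varepsilon^{\mm{Re}(s_j)}$ and hence vanishes as $\varepsilon\to0$, while the sub-integral over $(\varepsilon,\infty)^{2r}$ reproduces $\prod_j (e^{2\pi iu_j}-1)(e^{2\pi is_j}-1)$ times the real-axis integral. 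The estimate making this work is precisely Lemma~\ref{yamfivelemma}: on $D_\varepsilon$ one has $\bigl|e^{\frac12 x}e^{\frac12 t}/(e^x+e^t-1)\bigr| \ll 1$, which controls the kernel $1/(e^{x_j+\cdots+x_r}+e^{t_{\sigma(j)}+\cdots+t_r}-1)$ after absorbing half an exponential from each side; the hypothesis $a_{\sigma(j)}+b_j\geq1$ is exactly what guarantees the remaining exponential prefactors $e^{(1-a_{\sigma(j)})(\cdots)}e^{(1-b_j)(\cdots)}$ do not overwhelm this bound, so that each sub-integral converges absolutely and the $\varepsilon\to0$ limit may be taken termwise. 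This yields \eqref{contourinteta} for $\mm{Re}(u_j)$ large and $\mm{Re}(s_j)>0$.

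Finally, the right-hand side of \eqref{contourinteta} is manifestly an entire function of $(\mathbf{u}_r,\mathbf{s}_r)\in\Co^{2r}$: the contour integral converges absolutely and locally uniformly by the same bound (the $x_j^{u_j-1}$ and $t_j^{s_j-1}$ factors are integrable near $0$ on $\mathcal{C}_\varepsilon$ and the kernel decays exponentially along the rays), while the prefactor $\prod_j 1/\bigl((e^{2\pi iu_j}-1)(e^{2\pi is_j}-1)\Gamma(u_j)\Gamma(s_j)\bigr)$ is meromorphic with the poles of $1/(e^{2\pi iu_j}-1)$ at integers $u_j$ cancelled by the zeros of $1/\Gamma(u_j)$ at non-positive integers and by a zero of the integral at positive integers $u_j$ (where the contour integral collapses, as in the classical Hankel--Gamma argument), and similarly for $s_j$. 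Hence \eqref{contourinteta} provides the analytic continuation of $\eta(\mathbf{u}_r;\mathbf{s}_r;\sigma;\mathbf{a}_r;\mathbf{b}_r)$ to all of $\Co^{2r}$, and by identity-theorem it agrees with \eqref{intfiveeta} wherever the latter is defined.

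I expect the main obstacle to be the bookkeeping in the first step: correctly tracking how the permutation $\sigma$ threads through the partial sums $t_{\sigma(j)}+t_{\sigma(j)+1}+\cdots+t_r$ when one substitutes $z_j=1-e^{t_{\sigma(j)}+\cdots+t_r}$ into the multi-variable series/integral representation of ${\mm{Li}}_{\mathbf{u}_r}^{\cy}(\,\cdot\,;\mathbf{b}_r)$, and verifying that the exponential prefactors $e^{(1-a_j)(t_j+\cdots+t_r)}$ from \eqref{intfiveeta} reindex correctly to $e^{(1-a_{\sigma(j)})(t_{\sigma(j)}+\cdots+t_r)}$ after the change of variables matching the $x$-index $j$ to the $t$-index $\sigma(j)$. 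Once that identification is pinned down, the analytic part is a routine adaptation of Yamamoto's argument via Lemmas~\ref{yamfivelemma} and~\ref{zerodenai}.
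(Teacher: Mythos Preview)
Your overall strategy matches the paper's: substitute the contour representation \eqref{contourintli} of ${\mm{Li}}_{\mathbf{u}_r}^{\cy}(\,\cdot\,;\mathbf{b}_r)$ into \eqref{intfiveeta}, obtain the symmetric kernel $1/(e^{t_{\sigma(j)}+\cdots+t_r}+e^{x_j+\cdots+x_r}-1)$, then deform all $2r$ paths to $\mathcal{C}_\varepsilon$ and identify the $(\varepsilon,\infty)^{2r}$ piece. The bookkeeping you flag as the main obstacle is in fact routine.

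The genuine gap is the estimate. You invoke Lemma~\ref{yamfivelemma}, the symmetric bound $\bigl|e^{\frac12 x}e^{\frac12 t}/(e^x+e^t-1)\bigr|\ll 1$, and assert that $a_{\sigma(j)}+b_j\ge 1$ keeps the prefactors under control. This is false: $t$ and $x$ are \emph{independent} integration variables, so the sum of exponents being nonpositive is irrelevant---you need decay in each separately. Take $r=2$, $\sigma=\mm{id}$, $\mathbf{a}_2=(1,0)$, $\mathbf{b}_2=(1,1)$. The $j=2$ factor is $e^{t_2}/(e^{t_2}+e^{x_2}-1)$; the symmetric bound yields $\ll e^{\frac12\mm{Re}(t_2)}e^{-\frac12\mm{Re}(x_2)}$. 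The $j=1$ factor contributes $e^{-\frac12\mm{Re}(t_1+t_2)}e^{-\frac12\mm{Re}(x_1+x_2)}$. The net exponent of $t_2$ is zero, so your bound gives no decay along the $t_2$-ray and the estimate diverges, even though the true integrand decays.

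The paper fixes exactly this by proving a new asymmetric estimate, Lemma~\ref{fivelemma}: $\bigl|e^{\frac{4r-1}{4r}x}e^{\frac{1}{4r}t}/(e^x+e^t-1)\bigr|$ is bounded on $D_\varepsilon$. When $a_{\sigma(j)}=0$ one writes the $j$-th factor as $e^{(1-b_j)(x_j+\cdots)}\cdot e^{t_{\sigma(j)}+\cdots}/(e^{t_{\sigma(j)}+\cdots}+e^{x_j+\cdots}-1)$ and applies the asymmetric bound with the $\tfrac{4r-1}{4r}$ weight on the $t$-side; the residual growth $e^{\frac{1}{4r}(t_{\sigma(j)}+\cdots)}$ is then small enough that, summed over all $j$, each individual $t_l$ still carries net exponent at most $-\tfrac14$. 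The case $b_j=0$ is symmetric. This refinement, not Yamamoto's lemma, is what makes the contour argument go through in the presence of zero entries in $\mathbf{a}_r$ or $\mathbf{b}_r$.
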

	
	To prove Proposition \ref{propcontourinteta}, we need the following lemma.
	
	\begin{lemma}[{cf. \cite[Lemma 2.4]{Ya}}]\label{fivelemma}
		For $x, t\in D_{\varepsilon}$ and sufficiently small $\varepsilon\in\R_{>0}$,
		\begin{align}\label{4r}
			\left|\frac{e^{\frac{4r-1}{4r}x}e^{\frac{1}{4r}t}}{e^x+e^t-1}\right|
		\end{align}
		can be bounded by a constant which does not depend on $\varepsilon$.
	\end{lemma}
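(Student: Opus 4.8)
The plan is to adapt the case analysis from Yamamoto's proof of Lemma~\ref{yamfivelemma} to the asymmetric weights $\frac{4r-1}{4r}$ and $\frac{1}{4r}$, using only that these two exponents are nonnegative and sum to $1$. Write $x=\xi+i\eta$ and $t=\tau+i\theta$ with $\xi,\tau\geq-\varepsilon$ and $|\eta|,|\theta|\leq\varepsilon$, and fix once and for all a small $\varepsilon_0\in(0,\tfrac12]$. Since $D_{\varepsilon}\subseteq D_{\varepsilon_0}$ whenever $0<\varepsilon\leq\varepsilon_0$, it suffices to bound \eqref{4r} on $D_{\varepsilon_0}\times D_{\varepsilon_0}$, and the bound so obtained will not depend on $\varepsilon$.

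First I would estimate the denominator from below. Using $|w|\geq\mm{Re}(w)$ for $w\in\Co$ together with $\mm{Re}(e^x)=e^{\xi}\cos\eta\geq e^{-\varepsilon_0}\cos\varepsilon_0>0$ and the analogous bound for $e^t$, one gets, writing $M:=\max(\xi,\tau)$,
\[
|e^x+e^t-1|\ \geq\ e^{\xi}\cos\eta+e^{\tau}\cos\theta-1\ \geq\ e^{M}\cos\varepsilon_0+e^{-\varepsilon_0}\cos\varepsilon_0-1.
\]
Choosing $\varepsilon_0$ small enough that $2e^{-\varepsilon_0}\cos\varepsilon_0>1$ and $\cos\varepsilon_0>e^{-1}$ (both possible since the left-hand sides tend to $2$ and $1$ as $\varepsilon_0\to0$; e.g. $\varepsilon_0=\tfrac12$ works), we obtain $|e^x+e^t-1|\geq\delta:=2e^{-\varepsilon_0}\cos\varepsilon_0-1>0$ when $M<1$, and $|e^x+e^t-1|\geq e^{M}\bigl(\cos\varepsilon_0-e^{-1}\bigr)=:c_1e^{M}$ with $c_1>0$ when $M\geq1$.

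Next I would estimate the numerator. Since $\tfrac{4r-1}{4r}$ and $\tfrac{1}{4r}$ are nonnegative with sum $1$ and $\xi,\tau\leq M$, we have $\bigl|e^{\frac{4r-1}{4r}x}e^{\frac{1}{4r}t}\bigr|=e^{\frac{4r-1}{4r}\xi+\frac{1}{4r}\tau}\leq e^{M}$, and when $M<1$ this quantity is moreover $<e$. Combining with the denominator estimate, \eqref{4r} is at most $e^{M}/(c_1e^{M})=1/c_1$ when $M\geq1$ and at most $e/\delta$ when $M<1$; hence \eqref{4r} is bounded by $\max(1/c_1,\,e/\delta)$, a constant depending only on $\varepsilon_0$ and not on $\varepsilon$.

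I do not anticipate a genuine obstacle; the only delicate point is verifying that a single $\varepsilon_0$ makes the three inequalities $\cos\varepsilon_0>0$, $2e^{-\varepsilon_0}\cos\varepsilon_0>1$, and $\cos\varepsilon_0>e^{-1}$ hold simultaneously, which follows at once by continuity at $\varepsilon_0=0$. Structurally the argument uses nothing about $r$ except that the exponents of $x$ and $t$ in the numerator are nonnegative with sum at most $1$: this is precisely what keeps the numerator growing no faster than $e^{M}$ while the denominator grows like $e^{M}$ as $M\to\infty$, and the same scheme would yield an analogous bound for any such pair of weights.
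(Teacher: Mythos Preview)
Your argument is correct, and it takes a genuinely different route from the paper's proof. The paper rewrites the quantity by setting $w=e^{x}-\tfrac12$, $y=e^{t}-\tfrac12$ and factoring
\[
\left|\frac{e^{\frac{4r-1}{4r}x}e^{\frac{1}{4r}t}}{e^{x}+e^{t}-1}\right|
=\frac{\mm{Re}(w)+\mm{Re}(y)}{|w+y|}\cdot\frac{|w|+|y|}{\mm{Re}(w)+\mm{Re}(y)}\cdot\frac{|w|^{\frac{4r-1}{4r}}|y|^{\frac{1}{4r}}}{|w|+|y|}\cdot\left|\frac{e^{x}}{w}\right|^{\frac{4r-1}{4r}}\cdot\left|\frac{e^{t}}{y}\right|^{\frac{1}{4r}},
\]
then bounds each of the five factors separately, using in particular the binomial inequality $(|w|+|y|)^{4r}\geq 4r\,|w|^{4r-1}|y|$ for the middle factor. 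Your approach bypasses this factorisation entirely: you fix a single $\varepsilon_{0}$, use $|e^{x}+e^{t}-1|\geq e^{\xi}\cos\eta+e^{\tau}\cos\theta-1$ directly, and split on whether $M=\max(\xi,\tau)$ is below or above $1$. This is more elementary and makes it transparent that the only feature of the exponents $\tfrac{4r-1}{4r},\tfrac{1}{4r}$ being used is that they are nonnegative with sum at most~$1$, so the numerator grows at most like $e^{M}$; the paper's argument also yields a uniform bound in $r$ (namely the third factor is at most $(4r)^{-1/(4r)}<1$), but the dependence on $r$ is somewhat hidden inside the five-term decomposition. Either way the conclusion is the same, and your bound $\max(1/c_{1},\,e/\delta)$ depends only on $\varepsilon_{0}$, exactly as required.
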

	
	\begin{proof}
		Put $w=e^x-\frac{1}{2}, y=e^t-\frac{1}{2}, x=x_1+x_2i$ and $t=t_1+t_2i$. Then we have
		\[
		\left|\frac{e^{\frac{4r-1}{4r}x}e^{\frac{1}{4r}t}}{e^x+e^t-1}\right|=\frac{\mm{Re}(w)+\mm{Re}(y)}{|w+y|}\times\frac{|w|+|y|}{\mm{Re}(w)+\mm{Re}(y)}\times\frac{|w|^{\frac{4r-1}{4r}}|y|^{\frac{1}{4r}}}{|w|+|y|}\times\left|\frac{e^x}{w}\right|^{\frac{4r-1}{4r}}\times\left|\frac{e^t}{y}\right|^{\frac{1}{4r}}.
		\]
		The first term is bounded by $1$. For the forth and fifth term, since
		\begin{align*}
			\left|\frac{e^x}{e^x-\frac{1}{2}}\right|&=\left|\frac{1}{1-\frac{1}{2}(e^{-x_1}\cos{x_2}-ie^{-x_1}\sin{x_2})}\right|\\
			&=\frac{1}{\sqrt{(1-\frac{1}{2}e^{-x_1}\cos{x_2})^2+\frac{1}{4}e^{-2x_1}\sin^2{x_2}}}\\
			&\leq \frac{2}{|e^{-x_1}-2\cos{x_2}|},
		\end{align*}
		we obtain the bound $\frac{2}{\sqrt{3}-\sqrt{\frac{e+3}{2}}}$ if $\varepsilon<\frac{1}{2}\log{\frac{e+3}{2}}$. For the second term, if we take $\varepsilon$ satisfying $\varepsilon<\frac{1}{4}\log{3}$, we have
		\begin{align*}
			\frac{|w|+|y|}{\mm{Re}(w)+\mm{Re}(y)}&=\frac{|e^{x_1+x_2i}-\frac{1}{2}|+|e^{t_1+t_2i}-\frac{1}{2}|}{\mm{Re}(e^{x_1+x_2i}-\frac{1}{2})+\mm{Re}(e^{t_1+t_2i}-\frac{1}{2})}\\
			&\leq \frac{e^{x_1}+e^{t_1}+1}{\frac{\sqrt{3}}{2}\left(e^{x_1}+e^{t_1}\right)-1}\\
			&\leq \frac{1+\frac{\sqrt[4]{3}}{2}}{\frac{\sqrt{3}}{2}-\frac{\sqrt[4]{3}}{2}}.
		\end{align*}
		Hence the second term can be bounded by a constant which does not depend on $\varepsilon$. For the third term, since $\frac{|w|^{\frac{4r-1}{4r}}|y|^{\frac{1}{4r}}}{|w|+|y|}>0$, we have
		\begin{align*}
			\left(\frac{|w|^{\frac{4r-1}{4r}}|y|^{\frac{1}{4r}}}{|w|+|y|}\right)^{4r}&\leq\frac{|w|^{4r-1}|y|}{4r|w|^{4r-1}|y|}=\frac{1}{4r}<1.
		\end{align*}
		Therefore, if $\varepsilon<\frac{1}{4}\log{3}$, we can bound \eqref{4r} by a constant which does not depend on $\varepsilon$. 
	\end{proof}
	
	\begin{proof}[Proof of Proposition \ref{propcontourinteta}]
		By \eqref{contourintli}, for $\mm{Re}(s_j)>0~(j=1,\ldots, r)$, we have
		\begin{align*}
			&\eta(\mathbf{u}_r;\mathbf{s}_r;\sigma;\mathbf{a}_r;\mathbf{b}_r)\\
			=&\prod_{j=1}^{r}\frac{1}{(e^{2\pi iu_j}-1)\Gamma(u_j)\Gamma(s_j)}\int_{(0,\infty)^r}^{}\int_{(\mathcal{C}_\varepsilon)^r}^{}\prod_{j=1}^{r}t_j^{s_j-1}x_j^{u_j-1}\frac{e^{(1-a_{\sigma(j)})(t_{\sigma(j)}+t_{\sigma(j)+1}+\cdots+t_r)}e^{(1-b_j)(x_j+\cdots+x_r)}}{e^{t_{\sigma(j)}+t_{\sigma(j)+1}+\cdots+t_r}+e^{x_j+\cdots+x_r}-1}
			dt_jdx_j.
		\end{align*}
		Note that, when $a_{\sigma(j)}=0$, then $b_j\geq1$ and we have
		\[
		\frac{e^{(1-a_{\sigma(j)})(t_{\sigma(j)}+t_{\sigma(j)+1}+\cdots+t_r)}e^{(1-b_j)(x_j+\cdots+x_r)}}{e^{t_{\sigma(j)}+t_{\sigma(j)+1}+\cdots+t_r}+e^{x_j+\cdots+x_r}-1}=e^{(1-b_j)(x_j+\cdots+x_r)}\frac{
			e^{t_{\sigma(j)}+t_{\sigma(j)+1}+\cdots+t_r}}{e^{t_{\sigma(j)}+t_{\sigma(j)+1}\cdots+t_r}+e^{x_j+\cdots+x_r}-1}.
		\]
		Also, when $b_j=0$, then $a_{\sigma(j)}\geq1$ and we have
		\[
		\frac{e^{(1-a_{\sigma(j)})(t_{\sigma(j)}+t_{\sigma(j)+1}+\cdots+t_r)}e^{(1-b_j)(x_j+\cdots+x_r)}}{e^{t_{\sigma(j)}+t_{\sigma(j)+1}+\cdots+t_r}+e^{x_j+\cdots+x_r}-1}=e^{(1-a_{\sigma(j)})(t_{\sigma(j)}+t_{\sigma(j)+1}+\cdots+t_r)}
		\frac{e^{x_j+\cdots+x_r}}{e^{t_{\sigma(j)}+t_{\sigma(j)+1}\cdots+t_r}+e^{x_j+\cdots+x_r}-1}.
		\]
		Otherwise, we have
		\begin{align*}
			&\frac{e^{(1-a_{\sigma(j)})(t_{\sigma(j)}+t_{\sigma(j)+1}+\cdots+t_r)}e^{(1-b_j)(x_j+\cdots+x_r)}}{e^{t_{\sigma(j)}+t_{\sigma(j)+1}+\cdots+t_r}+e^{x_j+\cdots+x_r}-1}\\
			=&e^{(1-a_{\sigma(j)})(t_{\sigma(j)}+t_{\sigma(j)+1}+\cdots+t_r)}e^{(1-b_j)(x_j+\cdots+x_r)}\frac{1
			}{e^{t_{\sigma(j)}+t_{\sigma(j)+1}\cdots+t_r}+e^{x_j+\cdots+x_r}-1}.
		\end{align*}
		Therefore, by Lemma \ref{fivelemma} , we obtain
		\begin{align*}
			&\left|\prod_{j=1}^{r}t_j^{s_j-1}x_j^{u_j-1}\frac{e^{(1-a_{\sigma(j)})(t_{\sigma(j)}+t_{\sigma(j)+1}+\cdots+t_r)}e^{(1-b_j)(x_j+\cdots+x_r)}}{e^{t_{\sigma(j)}+t_{\sigma(j)+1}+\cdots+t_r}+e^{x_j+\cdots+x_r}-1} \right|\\
			\leq&M\left|\prod_{j=1}^{r}\frac{x_j^{u_j-1}t_j^{s_j-1}}{e^{\frac{1}{2}t_{j}}e^{\frac{1}{2}x_j}}\right|\left|\prod_{j=2}^{r}e^{\frac{1}{4r}(t_{j}+\cdots+t_r)}
			e^{\frac{1}{4r}(x_j+\cdots+x_r)}\right|\\
			\leq&M\left|\prod_{j=1}^{r}\frac{x_j^{u_j-1}t_j^{s_j-1}}{e^{\frac{1}{4}t_{j}}e^{\frac{1}{4}x_j}}\right|
		\end{align*}
		for a constant $M$. Hence we have
		\begin{align*}
			&\left|\int_{(0,\infty)^r}^{}\int_{(\mathcal{C}_\varepsilon)^r}^{}\prod_{j=1}^{r}t_j^{s_j-1}x_j^{u_j-1}\frac{e^{(1-a_j)(x_j+\cdots+x_r)}e^{-b_{\sigma(j)}(t_{\sigma(j)}+t_{\sigma(j)+1}+\cdots+t_r)}}{e^{x_j+\cdots+x_r}+e^{t_{\sigma(j)}+t_{\sigma(j)+1}+\cdots+t_r}-1} dt_jdx_j\right|\\
			\leq&M\sum_{\substack{I_1,\ldots,I_r\\=C(0;\varepsilon)~\mm{or}~(\varepsilon,\infty)}}^{}\prod_{l:I_l=(\varepsilon,\infty)}^{}(e^{2\pi iu_l}-1)\prod_{j=1}^{r}\int_{(0,\infty)}^{}\left|\frac{x_j^{u_j-1}}{e^{\frac{1}{4}x_j}}\right|dx_j\\
			&\five\times\prod_{j:I_j=(\varepsilon,\infty)}{}\int_{I_j}^{} \left|\frac{t_j^{s_j-1}}{e^{\frac{1}{4}t_j}}\right|dt_j\prod_{l:I_l=C(0;\varepsilon)}{}\int_{I_l}^{} \left|\frac{t_l^{s_l-1}}{e^{\frac{1}{4}t_l}}\right||dt_l|.
		\end{align*}
		Therefore, \eqref{contourinteta} converges absolutely. Moreover, except for the case $I_1=\cdots=I_r=(\varepsilon,\infty)$, it holds that, for $\mm{Re}(u_j)>0~(j=1,\ldots,r)$,
		\begin{align*}
			&\left|\int_{(0,\infty)^r}^{}\int_{I_1\times\cdots\times I_r}^{}\prod_{j=1}^{r}t_j^{s_j-1}x_j^{u_j-1}\frac{e^{(1-a_j)(x_j+\cdots+x_r)}e^{-b_{\sigma(j)}(t_{\sigma(j)}+t_{\sigma(j)+1}+\cdots+t_r)}}{e^{x_j+\cdots+x_r}+e^{t_{\sigma(j)}+t_{\sigma(j)+1}+\cdots+t_r}-1} dt_jdx_j\right|\\
			\leq& M_2\prod_{j:I_j=C(0;\varepsilon)}^{} \varepsilon^{\mm{Re}(u_j)}\rightarrow0~(\varepsilon\rightarrow0)
		\end{align*}
		with a constant $M_2$ depending on all $u_j$ and $s_j$. Hence we obtain
		\begin{align*}
			\eta(\mathbf{u}_r;\mathbf{s}_r;\sigma;\mathbf{a}_r;\mathbf{b}_r)&=\prod_{j=1}^{r}\frac{1}{\Gamma(u_j)\Gamma(s_j)}\int_{(0,\infty)^{2r}}\prod_{j=1}^{r}t_j^{s_j-1}x_j^{u_j-1}\frac{e^{(1-a_{\sigma(j)})(t_{\sigma(j)}+t_{\sigma(j)+1}+\cdots+t_r)}e^{(1-b_j)(x_j+\cdots+x_r)}}{e^{t_{\sigma(j)}+t_{\sigma(j)+1}+\cdots+t_r}+e^{x_j+\cdots+x_r}-1} dt_jdx_j.
		\end{align*}
		Here, we define
		\begin{align*}
			H(\mathbf{u}_r;\mathbf{s}_r)&=\int_{(\mathcal{C}_\varepsilon)^{2r}}\prod_{j=1}^{r}t_j^{s_j-1}x_j^{u_j-1}\frac{e^{(1-a_{\sigma(j)})(t_{\sigma(j)}+t_{\sigma(j)+1}+\cdots+t_r)}e^{(1-b_j)(x_j+\cdots+x_r)}}{e^{t_{\sigma(j)}+t_{\sigma(j)+1}+\cdots+t_r}+e^{x_j+\cdots+x_r}-1} dt_jdx_j.
		\end{align*}
		By Lemma \ref{fivelemma}, we have
		\begin{align*}
			&\left|\int_{(\mathcal{C}_\varepsilon)^{2r}}\prod_{j=1}^{r}t_j^{s_j-1}x_j^{u_j-1}\frac{e^{(1-a_{\sigma(j)})(t_{\sigma(j)}+t_{\sigma(j)+1}+\cdots+t_r)}e^{(1-b_j)(x_j+\cdots+x_r)}}{e^{t_{\sigma(j)}+t_{\sigma(j)+1}+\cdots+t_r}+e^{x_j+\cdots+x_r}-1} dt_jdx_j\right|\\
			\leq&\sum_{\substack{I_1,\ldots,I_{2r}\\=C(0;\varepsilon)~\mm{or}~(\varepsilon,\infty)}}{}\prod_{\substack{l_1=1,\ldots,r\\ I_{l_1}=(\varepsilon,\infty)}}^{}\prod_{\substack{l_2=r+1,\ldots,2r\\ I_{l_2}=(\varepsilon,\infty)}}^{}(e^{2\pi iu_{l_1}}-1)(e^{2\pi is_{l_2}}-1)\prod_{\substack{j_1\in\{1,\ldots,r\}\\ I_{j_1}=(\varepsilon,\infty)}}^{}\int_{I_{j_1}}^{}\left|\frac{x_{j_1}^{u_{j_1}-1}}{e^{\frac{1}{4}x_{j_1}}}\right|dx_{j_1}\\
			&\times\prod_{\substack{j_2\in\{1,\ldots,r\}\\ I_{j_2}=C(0;\varepsilon)}}^{}\int_{I_{j_2}}^{}\left|\frac{x_{j_2}^{u_{j_2}-1}}{e^{\frac{1}{4}x_{j_2}}}\right|
			|dx_{j_2}|\prod_{\substack{j_3\in\{r+1,\ldots,2r\}\\ I_{j_3}=(\varepsilon,\infty)}}^{}\int_{I_{j_3}}^{} \left|\frac{t_{j_3}^{s_{j_3}-1}}{e^{\frac{1}{4}t_{j_3}}}
			\right|dt_{j_3}\prod_{\substack{j_4\in\{r+1,\ldots,2r\}\\ I_{j_4}=C(0;\varepsilon)}}^{}\int_{I_{j_4}}^{} \left|\frac{t_{j_4}^{s_{j_4}-1}}{e^{\frac{1}{4}t_{j_4}}}
			\right||dt_{j_4}|.
		\end{align*}
		Hence $H(\mathbf{u}_r;\mathbf{s}_r)$ converges absolutely. Moreover, except for the case $I_1=\cdots=I_{2r}=(\varepsilon,\infty)$, it holds that
		\begin{align*}
			&\left|\int_{I_1\times\cdots\times I_{2r}}\prod_{j=1}^{r}t_j^{s_j-1}x_j^{u_j-1}\frac{e^{(1-a_{\sigma(j)})(t_{\sigma(j)}+t_{\sigma(j)+1}+\cdots+t_r)}e^{(1-b_j)(x_j+\cdots+x_r)}}{e^{t_{\sigma(j)}+t_{\sigma(j)+1}+\cdots+t_r}+e^{x_j+\cdots+x_r}-1} dt_jdx_j\right|\\
			\leq &M_3\prod_{\substack{j=1,\ldots,r\\ I_j=C(0;\varepsilon)}}^{}\prod_{\substack{l=r+1,\ldots,2r\\ I_l=C(0;\varepsilon)}}^{} \varepsilon^{\mm{Re}(u_j+s_l)}\rightarrow0~(\varepsilon\rightarrow0).
		\end{align*}
		Therefore, we have
		\begin{align*}
			&H(\mathbf{u}_r;\mathbf{s}_r)\\
			&=\prod_{j=1}^{r}(e^{2\pi iu_j}-1)(e^{2\pi is_j}-1)\int_{(0,\infty)^{2r}}\prod_{j=1}^{r}t_j^{s_j-1}x_j^{u_j-1}\frac{e^{(1-a_{\sigma(j)})(t_{\sigma(j)}+t_{\sigma(j)+1}+\cdots+t_r)}e^{(1-b_j)(x_j+\cdots+x_r)}}{e^{t_{\sigma(j)}+t_{\sigma(j)+1}+\cdots+t_r}+e^{x_j+\cdots+x_r}-1} dt_jdx_j,
		\end{align*}
		and we get the integral expression
		\begin{align*}
			\eta(\mathbf{u}_r;\mathbf{s}_r;\sigma;\mathbf{a}_r;\mathbf{b}_r)=\prod_{j=1}^{r}\frac{1}{(e^{2\pi iu_j}-1)(e^{2\pi is_j}-1)\Gamma(u_j)\Gamma(s_j)}H(\mathbf{u}_r;\mathbf{s}_r).
		\end{align*}
		This gives the analytic continuation of $\eta(\mathbf{u}_r;\mathbf{s}_r;\sigma;\mathbf{a}_r;\mathbf{b}_r)$ to the region $\Co^{2r}$.
		
	\end{proof}
	
	The duality formula is as follows.
	
	\begin{theorem}\label{firstdual}
		For $\mathbf{u}_r, \mathbf{s}_r\in\Co^r, \sigma\in\mathfrak{S}_r, \mathbf{a}_r\in\Z_{\geq0}^r$ with $a_1, a_{\sigma^{-1}(1)}\geq1$ and $\mathbf{b}_r\in\Z_{\geq0}^r$ with $b_1, b_{\sigma^{-1}(1)}\geq1$ and $a_{\sigma(j)}+b_j\geq0 $, we have
		\[
		\eta(\mathbf{u}_r;\mathbf{s}_r;\sigma;\mathbf{a}_r;\mathbf{b}_r)=\eta(\mathbf{s}_r;\mathbf{u}_r;\sigma^{-1};\mathbf{b}_r;\mathbf{a}_r).
		\]
	\end{theorem}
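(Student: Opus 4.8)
The plan is to read off the duality directly from the contour–integral representation \eqref{contourinteta} of Proposition \ref{propcontourinteta}, exactly in the spirit of Yamamoto's derivation of $\eta(\mathbf{u}_r;\mathbf{s}_r)=\eta(\mathbf{s}_r;\mathbf{u}_r)$ from \eqref{analyyameta}. Once all the analytic content (absolute convergence, collapsing the contours to $(0,\infty)$, analytic continuation to $\Co^{2r}$) has been absorbed into Proposition \ref{propcontourinteta}, what is left is a bookkeeping identity: the right–hand side $\eta(\mathbf{s}_r;\mathbf{u}_r;\sigma^{-1};\mathbf{b}_r;\mathbf{a}_r)$, rewritten via \eqref{contourinteta} for those parameters, is literally the same $2r$-fold contour integral as the left–hand side, after relabelling the integration variables and reindexing one product.

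Concretely, I would start from \eqref{contourinteta} for $\eta(\mathbf{u}_r;\mathbf{s}_r;\sigma;\mathbf{a}_r;\mathbf{b}_r)$ and split the integrand as the monomial factor $\prod_{j=1}^{r}t_j^{s_j-1}x_j^{u_j-1}$, which does not involve $\sigma,\mathbf{a}_r,\mathbf{b}_r$, times the kernel factor $\prod_{j=1}^{r}\frac{e^{(1-a_{\sigma(j)})(t_{\sigma(j)}+\cdots+t_r)}e^{(1-b_j)(x_j+\cdots+x_r)}}{e^{t_{\sigma(j)}+\cdots+t_r}+e^{x_j+\cdots+x_r}-1}$. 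In the kernel factor I replace the product index $j$ by $i=\sigma(j)$; since $\sigma$ is a bijection of $\{1,\dots,r\}$ this merely reorders the factors, and the $i$-th one becomes
\[
\frac{e^{(1-a_i)(t_i+\cdots+t_r)}\,e^{(1-b_{\sigma^{-1}(i)})(x_{\sigma^{-1}(i)}+\cdots+x_r)}}{e^{t_i+\cdots+t_r}+e^{x_{\sigma^{-1}(i)}+\cdots+x_r}-1},
\]
where one uses that $t_{\sigma(j)}+t_{\sigma(j)+1}+\cdots+t_r$ is the tail sum starting at the label $\sigma(j)$, so that these sums, as $j$ ranges over $\{1,\dots,r\}$, are exactly $t_1+\cdots+t_r,\ t_2+\cdots+t_r,\ \dots,\ t_r$ in some order. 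Recombining with the (unchanged) monomial factor, the integrand of the left–hand side equals $\prod_{i=1}^{r}t_i^{s_i-1}x_i^{u_i-1}\cdot\frac{e^{(1-a_i)(t_i+\cdots+t_r)}e^{(1-b_{\sigma^{-1}(i)})(x_{\sigma^{-1}(i)}+\cdots+x_r)}}{e^{t_i+\cdots+t_r}+e^{x_{\sigma^{-1}(i)}+\cdots+x_r}-1}$, with the measure $\prod_i dt_i\,dx_i$.

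The roles of the two families of variables have now interchanged: in this expression it is the $x$-variables that carry the shift $\sigma^{-1}$ and the exponent data $\mathbf{b}_r$, while the $t$-variables are unshifted and carry $\mathbf{a}_r$. Renaming $x_j\leftrightarrow t_j$ (harmless, as all $2r$ variables run over the same contour $\mathcal{C}_\varepsilon$) turns this into precisely the integrand of \eqref{contourinteta} for the parameters $(\mathbf{s}_r;\mathbf{u}_r;\sigma^{-1};\mathbf{b}_r;\mathbf{a}_r)$, and the scalar prefactor $\prod_{j=1}^{r}\frac{1}{(e^{2\pi iu_j}-1)(e^{2\pi is_j}-1)\Gamma(u_j)\Gamma(s_j)}$ is manifestly invariant under $\mathbf{u}_r\leftrightarrow\mathbf{s}_r$. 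Applying Proposition \ref{propcontourinteta} a second time, now to $(\mathbf{s}_r;\mathbf{u}_r;\sigma^{-1};\mathbf{b}_r;\mathbf{a}_r)$, then identifies this integral with $\eta(\mathbf{s}_r;\mathbf{u}_r;\sigma^{-1};\mathbf{b}_r;\mathbf{a}_r)$, which is the asserted identity. I do not expect a genuine obstacle: the one delicate point is the index bookkeeping in the reindexing step — one must keep straight that $t_{\sigma(j)}+\cdots+t_r$ is a tail sum in the original labels (not a ``$\sigma$-shuffled'' sum), so that replacing $j$ by $\sigma(j)$ really produces the $\sigma^{-1}$-shifted tail sums of the $x$-variables — together with verifying that $(\mathbf{s}_r;\mathbf{u}_r;\sigma^{-1};\mathbf{b}_r;\mathbf{a}_r)$ satisfies the hypotheses of Proposition \ref{propcontourinteta}; this last check, using $(\sigma^{-1})^{-1}=\sigma$, is precisely what the side conditions imposed on $\sigma,\mathbf{a}_r,\mathbf{b}_r$ in the statement are there to guarantee.
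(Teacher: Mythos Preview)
Your proposal is correct and follows essentially the same approach as the paper: both arguments invoke the contour-integral expression \eqref{contourinteta} from Proposition \ref{propcontourinteta}, reindex the kernel product via $j\mapsto\sigma(j)$ to turn the $\sigma$-shifted $t$-tails into $\sigma^{-1}$-shifted $x$-tails, and then recognize the result (after the harmless relabelling $x\leftrightarrow t$) as the integral expression for $\eta(\mathbf{s}_r;\mathbf{u}_r;\sigma^{-1};\mathbf{b}_r;\mathbf{a}_r)$. Your write-up is in fact more explicit than the paper's three-line proof, in particular in flagging the tail-sum bookkeeping and the verification that the dual parameters still satisfy the hypotheses of Proposition \ref{propcontourinteta}.
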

	
	\begin{proof}
		By the integral expression \eqref{contourinteta}, we have
		\begin{align*}
			&\eta(\mathbf{u}_r;\mathbf{s}_r;\sigma;\mathbf{a}_r;\mathbf{b}_r)\\
			=&\prod_{j=1}^{r}\frac{1}{(e^{2\pi iu_j}-1)(e^{2\pi is_j}-1)\Gamma(u_j)\Gamma(s_j)}\int_{\mathcal{C}^{2r}}^{}\prod_{j=1}^{r}t_j^{s_j-1}x_j^{u_j-1}
			\frac{e^{(1-a_{\sigma(j)})(t_{\sigma(j)}+t_{\sigma(j)+1}+\cdots+t_r)}e^{(1-b_j)(x_j+\cdots+x_r)}}{e^{t_{\sigma(j)}+t_{\sigma(j)+1}+\cdots+t_r}+e^{x_j+\cdots+x_r}-1} 
			dt_jdx_j\\
			=&\prod_{j=1}^{r}\frac{1}{(e^{2\pi is_j}-1)(e^{2\pi iu_j}-1)\Gamma(s_j)\Gamma(u_j)}\int_{\mathcal{C}^{2r}}^{}\prod_{j=1}^{r}x_j^{u_j-1}t_j^{s_j-1}
			\frac{e^{(1-b_{\sigma^{-1}(j)})(x_{\sigma^{-1}(j)}+x_{\sigma^{-1}(j)+1}+\cdots+x_r)}e^{(1-a_{j})(t_{j}+\cdots+t_r)}}{e^{x_{\sigma^{-1}(j)}+x_{\sigma^{-1}(j)+1}\cdots+x_r}+e^{t_{j}+\cdots+t_r}-1} 
			dx_jdt_j\\
			=&\eta(\mathbf{s}_r;\mathbf{u}_r;\sigma^{-1};\mathbf{b}_r;\mathbf{a}_r).
		\end{align*}
	Hence we obtain Theorem \ref{firstdual}.
	\end{proof}
	
	\begin{corollary}[{\cite[Proposition 2.5]{Ya}}]
		For $\mathbf{u}_r, \mathbf{s}_r\in\Co^r$, we have
		\[
		\eta(\mathbf{u}_r;\mathbf{s}_r)=\eta(\mathbf{s}_r;\mathbf{u}_r).
		\]
	\end{corollary}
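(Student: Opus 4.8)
The plan is to obtain this as the degenerate case of Theorem \ref{firstdual} corresponding to the trivial choice of auxiliary data. First I would specialize $\sigma$ to the identity permutation and both $\mathbf{a}_r$ and $\mathbf{b}_r$ to $(1,\ldots,1)$, and check that under this choice the defining integral \eqref{intfiveeta} reduces to \eqref{yamamotoeta}. Indeed, with $a_j=1$ every factor $e^{(1-a_j)(t_j+\cdots+t_r)}$ becomes $1$; with $\sigma=\mathrm{id}$ each partial sum $t_{\sigma(j)}+t_{\sigma(j)+1}+\cdots+t_r$ is just $t_j+t_{j+1}+\cdots+t_r$; with $b_j=1$ the denominator exponents all equal $1$; and with $\mathbf{b}_r=(1,\ldots,1)$ the generalized polylogarithm ${\mm{Li}}_{\mathbf{u}_r}^{\cy}(\mathbf{z}_r;\mathbf{b}_r)$ of \eqref{deffivepoly} collapses to the ordinary ${\mm{Li}}_{\mathbf{u}_r}^{\cy}(\mathbf{z}_r)$, since the summation lower bound returns to $1$. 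Comparing the two integrands term by term then gives
\[
\eta(\mathbf{u}_r;\mathbf{s}_r)=\eta(\mathbf{u}_r;\mathbf{s}_r;\mathrm{id};(1,\ldots,1);(1,\ldots,1)).
\]

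Next I would verify that this data meets the hypotheses of Theorem \ref{firstdual}: since $\sigma^{-1}=\mathrm{id}$ we have $\sigma^{-1}(1)=1$, so $a_1=a_{\sigma^{-1}(1)}=1\geq1$ and $b_1=b_{\sigma^{-1}(1)}=1\geq1$, while $a_{\sigma(j)}+b_j=2\geq0$ for every $j$. Applying the theorem, and using $\mathrm{id}^{-1}=\mathrm{id}$, yields
\[
\eta(\mathbf{u}_r;\mathbf{s}_r;\mathrm{id};(1,\ldots,1);(1,\ldots,1))=\eta(\mathbf{s}_r;\mathbf{u}_r;\mathrm{id};(1,\ldots,1);(1,\ldots,1)).
\]
Finally, reading the identification of the first paragraph with the roles of $\mathbf{u}_r$ and $\mathbf{s}_r$ interchanged gives $\eta(\mathbf{s}_r;\mathbf{u}_r;\mathrm{id};(1,\ldots,1);(1,\ldots,1))=\eta(\mathbf{s}_r;\mathbf{u}_r)$, and chaining the three equalities produces the claimed duality.

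I do not expect any real obstacle here; the only care needed is the bookkeeping in the first paragraph, namely checking precisely that $\mathbf{b}_r$ plays the double role of the summation lower bound inside ${\mm{Li}}_{\mathbf{u}_r}^{\cy}(\,\cdot\,;\mathbf{b}_r)$ and of the exponent vector on the denominator factors, and that both specialize correctly at $(1,\ldots,1)$. It may also be worth remarking that, letting $(\sigma,\mathbf{a}_r,\mathbf{b}_r)$ vary, Theorem \ref{firstdual} gives a whole family of self-dual relations for the multivariable Arakawa-Kaneko type zeta function, of which Yamamoto's formula is the most degenerate instance.
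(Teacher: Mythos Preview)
Your proposal is correct and follows exactly the paper's own argument: specialize $\sigma=\mathrm{id}$ and $\mathbf{a}_r=\mathbf{b}_r=(1,\ldots,1)$, observe that ${\mm{Li}}_{\mathbf{u}_r}^{\cy}(\mathbf{z}_r;1,\ldots,1)={\mm{Li}}_{\mathbf{u}_r}^{\cy}(\mathbf{z}_r)$ so that $\eta(\mathbf{u}_r;\mathbf{s}_r;\mathrm{id};(1,\ldots,1);(1,\ldots,1))=\eta(\mathbf{u}_r;\mathbf{s}_r)$, and invoke Theorem~\ref{firstdual}. Your write-up merely spells out in more detail the bookkeeping that the paper leaves implicit.
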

	
	\begin{proof}
		
		Since 
		\[
		{\mm{Li}}_{\mathbf{u}_r}^{\cy}(\mathbf{z}_r;1,\ldots,1)={\mm{Li}}_{\mathbf{u}_r}^{\cy}(\mathbf{z}_r),
		\]
		the result follows by putting $\sigma=\mm{id}, \mathbf{a}_r=\mathbf{b}_r=(1,\ldots,1)$ in Theorem \ref{firstdual}.
	\end{proof}
	
	\section{Values of $\eta(\mathbf{u}_r;\mathbf{s}_r;\sigma;\mathbf{a}_r;\mathbf{b}_r)$ for some particular case}
	
	In this section, we consider the values of $\eta(\mathbf{u}_r;\mathbf{s}_r;\sigma;\mathbf{a}_r;\mathbf{b}_r)$ for some particular case. For values at non-positive integers, we have the following proposition.
	
	\begin{proposition}\label{etaber}
		For $\mathbf{k}_r\in\Z^r, -\mathbf{n}_r=(-n_1,\ldots,-n_r)\in\Z_{\leq0}^r, \sigma\in\mathfrak{S}_r, \mathbf{a}_r\in\Z_{\geq0}^r$ with $a_1, a_{\sigma^{-1}(1)}\geq1$ and $\mathbf{b}_r\in\Z_{\geq0}^r$ with $b_1, b_{\sigma^{-1}(1)}\geq1$ and $a_j+b_j\geq1$, we have
		\begin{align}
			&\eta(\mathbf{k}_r;-\mathbf{n}_r;\sigma;\mathbf{a}_r;\mathbf{b}_r)=B_{\mathbf{n}_r}^{(\mathbf{k}_r)}(\sigma;\mathbf{a}_r;\mathbf{b}_r).\label{multinterpolate1}
		\end{align}
	\end{proposition}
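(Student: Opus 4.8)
The plan is to evaluate the integral expression \eqref{intfiveeta} for $\eta(\mathbf{u}_r;\mathbf{s}_r;\sigma;\mathbf{a}_r;\mathbf{b}_r)$ at $\mathbf{s}_r=-\mathbf{n}_r$ and $\mathbf{u}_r=\mathbf{k}_r$, comparing the result to the generating-function definition \eqref{deffiveber} of $B_{\mathbf{n}_r}^{(\mathbf{k}_r)}(\sigma;\mathbf{a}_r;\mathbf{b}_r)$. The essential mechanism is the classical fact that, for a non-negative integer $n$, the Mellin-type factor $\frac{1}{\Gamma(s)}\int_0^\infty t^{s-1} f(t)\,dt$ continued analytically to $s=-n$ picks out the coefficient $(-1)^n n!$ times the coefficient of $t^n$ in the Taylor expansion of $f$ at $t=0$ (equivalently, it extracts a term of the asymptotic expansion). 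Since the analytic continuation has already been established in Proposition \ref{propcontourinteta} via the contour integral \eqref{contourinteta}, I may legitimately work with whichever representation is most convenient for the substitution.

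First I would replace each $s_j$ by $-n_j$ in the contour representation \eqref{contourinteta}. The prefactor $\frac{1}{(e^{2\pi i s_j}-1)\Gamma(s_j)}$ has a removable singularity there: using the reflection formula $\Gamma(s_j)\Gamma(1-s_j)=\pi/\sin(\pi s_j)$ one sees that $\frac{1}{(e^{2\pi i s_j}-1)\Gamma(s_j)}$ tends to a nonzero limit as $s_j\to -n_j$, and more to the point the combination $\frac{1}{(e^{2\pi i s_j}-1)\Gamma(s_j)}\int_{\mathcal{C}_\varepsilon} t_j^{s_j-1}(\cdots)\,dt_j$ collapses, in the limit, to $(-1)^{n_j}n_j!$ times the residue-type extraction of the $t_j^{n_j}$-coefficient; concretely the $t_j$-contour integral around the origin of $t_j^{s_j-1}$ against an analytic function is, by the residue theorem, $(e^{2\pi i s_j}-1)$ times nothing plus the loop contribution, and at $s_j=-n_j$ it becomes $2\pi i$ times the coefficient of $t_j^{n_j}$. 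Carrying this out in all $r$ of the $t_j$ variables turns the $2r$-fold contour integral into an $r$-fold contour integral (only the $x_j$-variables survive) whose integrand is $\prod_j x_j^{u_j-1}$ times the coefficient, in $\prod_j t_j^{n_j}/n_j!$ up to sign, of the function
\[
\prod_{j=1}^{r}e^{(1-a_{\sigma(j)})(t_{\sigma(j)}+\cdots+t_r)}\frac{1}{e^{t_{\sigma(j)}+\cdots+t_r}+e^{x_j+\cdots+x_r}-1}.
\]
Then I would recognize the remaining $x_j$-contour integral with the prefactor $\prod_j\frac{1}{(e^{2\pi i u_j}-1)\Gamma(u_j)}$ as exactly the representation \eqref{contourintli} of a $\cy$-type polylogarithm ${\mm{Li}}_{\mathbf{k}_r}^{\cy}(\,\cdot\,;\mathbf{b}_r)$ evaluated at the arguments $1-e^{t_{\sigma(j)}+\cdots+t_r}$, after matching $z_j=1-e^{t_{\sigma(j)}+\cdots+t_r}$ and checking that the denominator $e^{x_j+\cdots+x_r}-z_j$ becomes $e^{x_j+\cdots+x_r}+e^{t_{\sigma(j)}+\cdots+t_r}-1$ and that the $b_j$-th power in \eqref{deffivepoly}--\eqref{deffiveber} corresponds to $z_j^{a_j}\to z_j^{b_j}$ together with the shift factor; the book-keeping of the exponential prefactors $e^{(1-a_{\sigma(j)})(\cdots)}$ versus $e^{(a_j-1)(\cdots)}$ in \eqref{deffiveber} must be tracked carefully, but it is designed to match.

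Assembling the pieces, the $t$-extraction produces $\prod_j(-1)^{n_j}n_j!$ times the coefficient of $\prod_j t_j^{n_j}$ in the generating function \eqref{deffiveber} after the sign flip $t_j\mapsto -t_j$ (which accounts for the $(-1)^{n_1+\cdots+n_r}$ and turns $e^{(a_j-1)(\cdots)}$ into $e^{(1-a_j)(\cdots)}$), and that coefficient is by definition $B_{\mathbf{n}_r}^{(\mathbf{k}_r)}(\sigma;\mathbf{a}_r;\mathbf{b}_r)/\prod_j n_j!$. Multiplying out, the factorials and signs cancel precisely, yielding \eqref{multinterpolate1}. The main obstacle I anticipate is the sign and prefactor bookkeeping: one must verify that the exponential shift factors $e^{(1-a_j)(t_j+\cdots+t_r)}$ appearing in the $\eta$-integrand \eqref{intfiveeta}, the $e^{(a_j-1)(t_j+\cdots+t_r)}$ in the Bernoulli generating function \eqref{deffiveber}, the sign change $t_j\mapsto -t_j$ needed to pass from $1-e^{t}$ to $1-e^{-t}$, and the $(-1)^{n_j}$ coming from the $s_j=-n_j$ Mellin evaluation all conspire to give exactly \eqref{multinterpolate1} with no residual factor; a secondary technical point is justifying the interchange of the $t$-contour limit with the absolutely convergent $x$-integral, which follows from the uniform bounds of Lemma \ref{fivelemma} already used in the proof of Proposition \ref{propcontourinteta}.
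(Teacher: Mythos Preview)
Your approach is correct and is precisely the standard contour-integral method that the paper has in mind: the paper omits the proof entirely, saying only that ``the argument for the proof is similar to that of \cite[Theorem 5.7]{KT1}'', and the Kaneko--Tsumura argument referenced there is exactly the mechanism you describe---at $s_j=-n_j$ the Hankel-type contour collapses to the loop around the origin, the prefactor $\frac{1}{(e^{2\pi i s_j}-1)\Gamma(s_j)}$ tends to $\frac{(-1)^{n_j}n_j!}{2\pi i}$, and the residue extracts the $t_j^{n_j}$-coefficient of the generating function \eqref{deffiveber} after the substitution $t_j\mapsto -t_j$. One small remark on presentation: your paragraphs read as if you first extract the $t_j$-coefficients and only afterwards identify the $x_j$-integral with ${\mm{Li}}_{\mathbf{k}_r}^{\cy}(\,\cdot\,;\mathbf{b}_r)$, but of course the $t_j$'s must still be present for that identification; it is cleaner to first observe (via Lemma \ref{symfivelemma} with $z_j=1-e^{t_{\sigma(j)}+\cdots+t_r}$) that the inner $x_j$-integral recovers exactly the integrand of \eqref{intfiveeta}, so that the $t_j$-contour integral is applied to the Bernoulli generating function itself, and then let $s_j\to -n_j$.
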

	
	\begin{proof}
		Since the argument for the proof is similar to that of \cite[Theorem 5.7]{KT1}, we omit the proof. 
		
	\end{proof}
	
	\begin{example}
		We consider the case $-\mathbf{k}_r=(-k_1,\ldots,-k_r)\in\Z_{\leq0}^r, -\mathbf{n}_r=(-n_1,\ldots,-n_r)\in\Z_{\leq0}^r, \sigma=\mm{id}, \mathbf{a}_r=(1,0,0,\ldots,0), \mathbf{b}_r=(1,\ldots,1)$. 
		Then, by Theorem \ref{firstdual} and Proposition \ref{etaber}, we have
		\[
		B_{\mathbf{n}_r}^{(-\mathbf{k}_r)}(\mm{id};1,0,0,\ldots,0;1,\ldots,1)=B_{\mathbf{k}_r}^{(-\mathbf{n}_r)}({\mm{id}}^{-1};1,\ldots,1;1,0,0,\ldots,0).
		\]
		For $B_{\mathbf{n}_r}^{(-\mathbf{k}_r)}(\mm{id};1,0,0,\ldots,0;1,\ldots,1)$, the generating function is 
		\[
		\frac{{\mm{Li}}_{-\mathbf{k}_r}^{\textup{\foreignlanguage{russian}{\cyrsh}},\star}(1-e^{-t_1-\cdots-t_r},\ldots,1-e^{-t_r})}{1-e^{-t_1-\cdots-t_r}},
		\]
		which is similar to that of $\mathbb{B}_{\mathbf{m}_r}^{\star,(\mathbf{u}_r)}$ defined by Baba, Nakasuji and Sakata. On the other hand, for 
		$B_{\mathbf{k}_r}^{(-\mathbf{n}_r)}({\mm{id}}^{-1};1,\ldots,1;1,0,0,\ldots,0)$, the generating function is
		\[
		\prod_{j=2}^{r}e^{-(t_j+\cdots+t_r)}\frac{{\mm{Li}}_{-\mathbf{n}_r}^{\textup{\foreignlanguage{russian}{\cyrsh}}}(1-e^{-t_1-\cdots-t_r},\ldots,1-e^{-t_r})}{1-e^{-t_1-\cdots-t_r}}.
		\]
		Hence, by \eqref{itocber}, we have
		\[
		B_{\mathbf{k}_r}^{(-\mathbf{n}_r)}({\mm{id}}^{-1};1,\ldots,1;1,0,0,\ldots,0)=\sum_{m_1=0}^{k_1}\cdots\sum_{m_r=0}^{k_r}\binom{k_1}{m_1}\cdots\binom{k_r}{m_r}C_{k_1-m_1,\ldots,k_r-m_r}^{(-n_1,\ldots,-n_r),(r)},
		\]
		and
		\[
		B_{\mathbf{n}_r}^{(-\mathbf{k}_r)}(\mm{id};1,0,0,\ldots,0;1,\ldots,1)=\sum_{m_1=0}^{k_1}\cdots\sum_{m_r=0}^{k_r}\binom{k_1}{m_1}\cdots\binom{k_r}{m_r}C_{k_1-m_1,\ldots,k_r-m_r}^{(-n_1,\ldots,-n_r),(r)}.
		\]
	\end{example}
	
	For the values at positive integers, we have the following proposition. For simplicity, we put
	\begin{align*}
		\eta(\mathbf{u}_r;\mathbf{s}_r;\sigma;\mathbf{a}_r;1,\ldots,1)=\eta^{\star}(\mathbf{u}_r;\mathbf{s}_r;\sigma;\mathbf{a}_r)
	\end{align*}
	for $(a_1,\ldots,a_r)\in\{0,1\}^r$ with $a_1=1$, and
	\begin{align*}
		\eta(\mathbf{u}_r;\mathbf{s}_r;\sigma;1,\ldots,1;\mathbf{b}_r)=\eta^{\star\star}(\mathbf{u}_r;\mathbf{s}_r;\sigma;b_1,\ldots,b_r)
	\end{align*}
	for $ (b_1,\ldots,b_r)\in\{0,1\}^r$ with $b_1=1$. Note that, with this notation, Theorem \ref{firstdual} can be written as
	\begin{align}\label{eplidual}
		\eta^{\star}(\mathbf{k}_r;\mathbf{n}_r;\sigma;a_1,\ldots,a_r)=\eta^{\star\star}(\mathbf{n}_r;\mathbf{k}_r;\sigma^{-1};a_1,\ldots,a_r)
	\end{align}
	with $a_1,\ldots,a_r\in\Z_{\geq0}$ with $(a_1,a_{\sigma^{-1}(1)})=(1,1)$.
	\begin{theorem}\label{zetaeta}
		For $\mathbf{k}_r, \mathbf{n}_r\in\Z_{\geq1}^r, \sigma\in\mathfrak{S}_r, \mathbf{a}_r=\in\left\{0,1\right\}^r$ with
		$(a_1, a_{\sigma^{-1}(1)})=(1,1)$, we have
		\[\eta^{\star}(\mathbf{k}_r;\mathbf{n}_r;\sigma;\mathbf{a}_r), \eta^{\star\star}(\mathbf{k}_r;\mathbf{n}_r;\sigma;\mathbf{a}_r)\in\mathcal{Z}_{k_1+\cdots+k_r+n_1+\cdots+n_r}.\]
	\end{theorem}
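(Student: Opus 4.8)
The plan is to follow Ito's proof of Theorem~\ref{mzvfunc} (that is, of $\eta(\mathbf{k}_r;\mathbf{n}_r)\in\mathcal{Z}_{k_1+\cdots+k_r+n_1+\cdots+n_r}$ in \cite[Theorem 4.3]{It3}) and to check that the extra data $\sigma$ and $\mathbf{a}_r$ introduce only modifications that remain within the scope of Lemma~\ref{itlem2}. The statements for $\eta^\star$ and $\eta^{\star\star}$ are proved by the same argument with the roles of the two families of integration variables (and of the two index tuples, and of $\sigma$ and $\sigma^{-1}$) interchanged (compare \eqref{eplidual}), so I concentrate on $\eta^\star(\mathbf{k}_r;\mathbf{n}_r;\sigma;\mathbf{a}_r)$, fixing $\mathbf{k}_r,\mathbf{n}_r\in\Z_{\geq1}^r$, $\sigma\in\mathfrak{S}_r$ and $\mathbf{a}_r\in\{0,1\}^r$ with $a_1=a_{\sigma^{-1}(1)}=1$.

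First I would use the real-variable integral expression obtained inside the proof of Proposition~\ref{propcontourinteta}, which is valid whenever $\mm{Re}(u_j),\mm{Re}(s_j)>0$ and in particular at positive integers; specialising it to $\mathbf{b}_r=(1,\ldots,1)$, $\mathbf{u}_r=\mathbf{k}_r$ and $\mathbf{s}_r=\mathbf{n}_r$ gives
\begin{align*}
&\eta^\star(\mathbf{k}_r;\mathbf{n}_r;\sigma;\mathbf{a}_r)\\
&=\prod_{j=1}^r\frac{1}{(k_j-1)!\,(n_j-1)!}\int_{(0,\infty)^{2r}}\prod_{j=1}^r t_j^{n_j-1}x_j^{k_j-1}\,\frac{e^{(1-a_{\sigma(j)})(t_{\sigma(j)}+\cdots+t_r)}}{e^{t_{\sigma(j)}+\cdots+t_r}+e^{x_j+\cdots+x_r}-1}\,dt_j\,dx_j.
\end{align*}
This is the analogue of the $2r$-fold integral that Ito treats in \cite[Section 4]{It3}, now with the Arakawa--Kaneko kernels paired through $\sigma$ and carrying the extra factors $e^{(1-a_{\sigma(j)})(t_{\sigma(j)}+\cdots+t_r)}$, each equal to $1$ when $a_{\sigma(j)}=1$ and to $e^{t_{\sigma(j)}+\cdots+t_r}$ when $a_{\sigma(j)}=0$. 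As in \cite[Section 4]{It3} one must also keep track of the region where the defining series of ${\mm{Li}}_{\mathbf{k}_r}^{\cy}$ diverges, using its analytic continuation from Lemma~\ref{symfivelemma}.

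Next I would substitute to turn $e^{-(t_m+\cdots+t_r)}$ and $e^{-(x_m+\cdots+x_r)}$ ($m=1,\ldots,r$) into algebraic variables in $(0,1)$, so that the kernels become rational functions and $\prod_j dt_j\,dx_j$ becomes a product of logarithmic one-forms; then expand $t_j^{n_j-1}/(n_j-1)!$ and $x_j^{k_j-1}/(k_j-1)!$ as iterated integrals $\int\prod(dv/v)$ and $\int\prod(du/u)$ over nested simplices, the classical device used in passing from $\xi(\mathbf{k}_r;s)$ and $\eta(\mathbf{k}_r;s)$ to multiple zeta values; and recognise, via \eqref{polyhyp}, the polylogarithmic factor as a hyperlogarithm $I(0;\cdots;1)$ of length $k_1+\cdots+k_r$ and the factor produced by the $t_j^{n_j-1}$ as a hyperlogarithm $I(0;\cdots;x_r)$ of length $n_1+\cdots+n_r-r$. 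A finite sequence of variable-removing operations (Lemma~\ref{itlem1}) then puts the whole expression into the exact shape to which Lemma~\ref{itlem2} applies, with $r$ outer variables, so that the lemma gives membership in $\mathcal{Z}_{n+l+r}$ with $n=k_1+\cdots+k_r$ and $l=n_1+\cdots+n_r-r$, i.e.\ in $\mathcal{Z}_{k_1+\cdots+k_r+n_1+\cdots+n_r}$.

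The two genuinely new points are where the work lies. First, the permutation $\sigma$ only controls which outer variable occurs, after variable removing, in which letter of $I(0;a_1,\ldots,a_n;1)$ and $I(0;b_1,\ldots,b_l;x_r)$; Lemma~\ref{itlem2} imposes no monotonicity on that assignment, only the spanning condition that each outer variable occur somewhere, and this holds here because every kernel $\bigl(e^{t_{\sigma(j)}+\cdots+t_r}+e^{x_j+\cdots+x_r}-1\bigr)^{-1}$ involves both a $t$-tail and an $x$-tail. Second, for each $j$ with $a_{\sigma(j)}=0$ the extra factor $e^{t_{\sigma(j)}+\cdots+t_r}$ must be absorbed: combining it with the logarithmic form of the variable in which it appears and with the corresponding kernel, and then doing partial fractions, yields only additional simple poles whose letters still lie in $\{0,1\}$ or among the reciprocals of the outer variables, so the hypotheses of Lemma~\ref{itlem2} --- in particular $c_j\in\{0,1\}$, $a_1\neq0$, $a_n\neq1$, $b_1\neq0$, $b_l\neq x_r$ --- are preserved. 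It is precisely to preserve these boundary and admissibility conditions, for arbitrary $\sigma$ and arbitrary $\mathbf{a}_r\in\{0,1\}^r$, that the hypotheses $a_1=1$ and $a_{\sigma^{-1}(1)}=1$ are used; I expect the careful bookkeeping of these conditions through the variable-removing step to be the main obstacle.
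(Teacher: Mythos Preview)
Your overall strategy---follow Ito's argument and verify that the extra data $\sigma$ and $\mathbf{a}_r$ stay within the hypotheses of Lemma~\ref{itlem2}, with the same weight count $n+l+r=(k_1+\cdots+k_r)+(n_1+\cdots+n_r-r)+r$---is exactly what the paper does, and your discussion of why $\sigma$ causes no trouble (Lemma~\ref{itlem2} imposes no monotonicity, only a spanning condition) is correct and to the point.

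The one genuine difference is in how the $\mathbf{a}_r$-dependence is eliminated. You start from the $2r$-fold real integral of Proposition~\ref{propcontourinteta} and propose absorbing each factor $e^{(1-a_{\sigma(j)})(t_{\sigma(j)}+\cdots+t_r)}$ by partial fractions against the matching kernel and differential form. The paper instead stays with the $r$-fold defining integral \eqref{intfiveeta} and first uses the purely combinatorial fact that, for $\mathbf{a}_r\in\{0,1\}^r$ with $a_1=1$, the series ${\mm{Li}}_{\mathbf{k}_r}^{\cy}(\mathbf{z}_r;\mathbf{a}_r)$ decomposes as a $\Z$-linear combination of ordinary polylogarithms ${\mm{Li}}_{l(k_1,\ldots,k_r)}^{\cy}(z_{l_1},\ldots,z_{l_p})$ with contracted index blocks (splitting off the $l_j=0$ term wherever $a_j=0$). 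After that reduction one is literally in Ito's setting and can quote \eqref{polyhyp} and Lemma~\ref{itlem2} with no further manipulation of the kernels. This buys a shorter argument and sidesteps the bookkeeping you flag as ``the main obstacle''; your partial-fraction route should also succeed, but the detour through the $2r$-fold integral is unnecessary, since you immediately have to reassemble the $x$-integrals into the polylogarithm via \eqref{polyhyp} anyway.
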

	
	\begin{proof}
		We use the notation in \cite{It1}. We write
		\[
		{\mm{Li}}_{\mathbf{k}_r}^{\cy}\left(\mathbf{z}_r;\mathbf{a}_r\right)=\sum_{p=1}^{r}\sum_{1= l_1<\cdots<l_p\leq r}^{} c(l(k_1,\ldots,k_r);\mathbf{a}_r){\mm{Li}}_{l(k_1,\ldots,k_r)}^{\cy}\left(z_{l_1},\ldots,z_{l_p}\right),
		\]
		where 
		\[
		l(k_1,\ldots,k_r)=(k_1+\cdots+k_{l_1}, k_{l_1+1}+\cdots+k_{l_2}, \ldots, k_{l_{p}+1}+\cdots+k_{r})
		\]
		and $c(l(k_1,\ldots,k_r);\mathbf{a}_r)\in\Z$. For the last entry, if $p=r$, $k_{l_{p}+1}+\cdots+k_{r}=k_r$. We consider the value $\eta^{\star}(\mathbf{k}_r;\mathbf{n}_r;\sigma;\mathbf{a}_r)$. By \eqref{intfiveeta}, \eqref{polyhyp} and changing variables $x_j=1-e^{-t_j-\cdots-t_r}$, we have 
		\begin{align*}
			&\eta^{\star}(\mathbf{k}_r;\mathbf{n}_r;\sigma;\mathbf{a})\\
			=&\prod_{j=1}^{r}\frac{1}{\Gamma(n_j)}\int_{(0,\infty)^r}^{}\prod_{j=1}^{r}t_j^{n_j-1}\frac{{\mm{Li}}_{\mathbf{k}_r}^{\cy}(1-e^{t_{\sigma(1)}+t_{\sigma(1)+1}+\cdots+t_r},\ldots,1-e^{t_{\sigma(r)}+t_{\sigma(r)+1}+\cdots+t_r};\mathbf{a}_r)}{(1-e^{t_{\sigma(1)}+t_{\sigma(1)+1}+\cdots+t_r})^{a_1}\cdots(1-e^{t_{\sigma(r)}+t_{\sigma(r)+1}+\cdots+t_r})^{a_r}}dt_1\cdots dt_r\\
			=&\sum_{p=1}^{r}\sum_{1\leq l_1<\cdots<l_p\leq r}^{} c(l(k_1,\ldots,k_r);\mathbf{a}_r)\prod_{j=1}^{r}\frac{1}{\Gamma(n_j)}\int_{0<x_r<\cdots<x_1<1}^{}\prod_{j=1}^{r}({\mm{Li}}_{1}(x_j)-{\mm{Li}}_{1}(x_{j+1}))^{n_j-1}\\
			&\five\times {\mm{Li}}_{l(k_1,\ldots,k_r)}^{\cy}\left(\frac{1}{1-x_{\sigma(l_1)}^{-1}},\ldots,\frac{1}{1-x_{\sigma(l_p)}^{-1}}\right)\prod_{j=1}^{r}\left(-\frac{1}{x_{\sigma(j)}}\right)^{a_j}\left(\frac{1}{1-x_{\sigma(j)}}\right)^{1-a_j}dx_j\\
			=&\sum_{p=1}^{r}\sum_{1\leq l_1<\cdots<l_p\leq r}^{} c(l(k_1,\ldots,k_r);\mathbf{a}_r)\prod_{j=1}^{r}\frac{1}{\Gamma(n_j)}\int_{0<x_r<\cdots<x_1<1}^{}(I(0;1;x_{j+1})-I(0;1;x_{j}))^{n_j-1}\\
			&\times I(0;1-x_{\sigma(l_1)}^{-1},\{0\}_{k_1+\cdots+k_{l_1}-1},\ldots,
			1-x_{\sigma(l_p)}^{-1},\{0\}_{k_{l_{p}+1}+k_{r}-1};1)\prod_{j=1}^{r}\left(-\frac{1}{x_{\sigma(j)}}\right)^{a_j}\left(\frac{1}{1-x_{\sigma(j)}}\right)^{1-a_j}dx_j\\
			=&\sum_{p=1}^{r}\sum_{1\leq l_1<\cdots<l_p\leq r}^{} c(l(k_1,\ldots,k_r);\mathbf{a}_r)\prod_{j=1}^{r}\frac{1}{\Gamma(n_j)}\int_{0<x_r<\cdots<x_1<1}^{}(I(0;1;x_{j+1})-I(0;1;x_{j}))^{n_j-1}\\
			&\times I(0;\{1\}_{k_{l_{p}+1}+k_{r}-1}, x_{\sigma(l_p)}^{-1},\ldots,
			\{1\}_{k_1+\cdots+k_{l_1}-1},x_{\sigma(l_1)}^{-1};1)\prod_{j=1}^{r}\left(-\frac{1}{x_{\sigma(j)}}\right)^{a_j}\left(\frac{1}{1-x_{\sigma(j)}}\right)^{1-a_j}dx_j.
		\end{align*}
		Hence, by Lemma \ref{itlem2} and 
		\[
		\int_{0}^{x_{j-1}}\frac{1}{1-x_j}dx_j=-I(0;1;x_{j-1}),
		\]
		we obtain $\eta^{\star}(\mathbf{k}_r;\mathbf{n}_r;\sigma;\mathbf{a}_r)\in\mathcal{Z}_{k_1+\cdots+k_r+n_1+\cdots+n_r}$. Also, by the similar calculation, we obtain $\eta^{\star\star}(\mathbf{k}_r;\mathbf{n}_r;\sigma;\mathbf{a}_r)\in\mathcal{Z}_{k_1+\cdots+k_r+n_1+\cdots+n_r}$.
	\end{proof}
	
	\begin{example}
		
		For the case $r=2, n_1=n_2=k_1=k_2=1, \sigma=\mm{id}, a_1=1,a_2=0$ in Theorem \ref{firstdual} (or \eqref{eplidual}), we have
		\[
		\eta^{\star}(1,1;1,1;\mm{id};1,0)=\eta^{\star\star}(1,1;1,1;\sigma^{-1};1,0).
		\]
		Now we calculate these values explicitly by variable removing. By changing variable $1-e^{-t_j-\cdots-t_r}=x_j$, we have
		\begin{align*}
			\eta^{\star}(1,1;1,1;\mm{id};1,0)&=\int_{(0,\infty)^2}^{} \frac{{\mm{Li}}_{1,1}^{\cy,\star}(1-e^{t_1+t_2},1-e^{t_2})}{1-e^{t_1+t_2}}dt_1dt_2\\
			&=\int_{(0,\infty)^2}^{} \frac{{\mm{Li}}_{1,1}^{\cy}(1-e^{t_1+t_2},1-e^{t_2})}{1-e^{t_1+t_2}}dt_1dt_2+\int_{(0,\infty)^2}^{} \frac{{\mm{Li}}_{2}(1-e^{t_1+t_2})}{1-e^{t_1+t_2}}dt_1dt_2\\
			&=\int_{0<x_2<x_1<1}^{} \frac{{\mm{Li}}_{1,1}^{\cy}\left(\frac{1}{1-x_1^{-1}},\frac{1}{1-x_2^{-1}}\right)}{x_1(x_2-1)}dx_1dx_2+\int_{0<x_2<x_1<1}^{} \frac{{\mm{Li}}_{2}
				\left(\frac{1}{1-x_1^{-1}}\right)}{x_1(x_2-1)}dx_1dx_2\\
			&=\int_{0<x_2<x_1<1}^{} \frac{I(0;x_2^{-1},x_1^{-1};1)}{x_1(x_2-1)}dx_1dx_2-\int_{0<x_2<x_1<1}^{} \frac{I(0;x_1,1;x_1)}{x_1(x_2-1)}dx_1dx_2.
		\end{align*}
		For ${\mm{Li}}_{1,1}^{\cy}(x_2,x_1)=I(0;x_2^{-1},x_1^{-1};1)$, by variable removing, we obtain
		\begin{align*}
			I(0;x_2^{-1},x_1^{-1};1)&=I(0;x_1;x_2)I(0;1;x_1)+I(0;1,0;x_2)-I(0;1,x_1;x_2),\\
			I(0;1,x_1,1;x_1)&=I(0;1,1,1;x_1)-I(0;1,0,1;x_1).
		\end{align*}
		Hence we have
		\begin{align*}
			\eta^{\star}(1,1;1,1;\mm{id};1,0)&=\int_{0<x_2<x_1<1}^{} \frac{I(0;x_1;x_2)I(0;1;x_1)+I(0;1,0;x_2)-I(0;1,x_1;x_2)}{x_1(x_2-1)}dx_1dx_2\\
			&-\int_{0<x_2<x_1<1}^{} \frac{I(0;x_1,1;x_1)}{x_1(x_2-1)}dx_1dx_2\\
			&=\int_{0}^{1} \frac{I(0;x_1,1;x_1)I(0;1;x_1)+I(0;1,0,1;x_1)-I(0;1,x_1,1;x_1)}{x_1}dx_1\\
			&-\int_{0}^{1} \frac{I(0;x_1,1;x_1)I(0;1;x_1)}{x_1}dx_1\\
			&=\int_{0}^{1} \frac{I(0;1,0,1;x_1)-I(0;1,x_1,1;x_1)}{x_1}dx_1\\
			&=\int_{0}^{1} \frac{2I(0;1,0,1;x_1)-I(0;1,1,1;x_1)}{x_1}dx_1\\
			&=\zeta(1,1,2)+2\zeta(2,2).
		\end{align*}
		For $\eta^{\star\star}(1,1;1,1;\mm{id}^{-1};1,0)$, a similar calculation shows
		\begin{align*}
			I(0;x_1,1,0;x_1)&=I(0;1,0,1;x_1)-2I(0;1,0,0;x_1)+I(0;1,1,0;x_1)+I(0;1;x_1)\zeta(2),\\
			I(0;x_1,0,1;x_1)&=-I(0,1,1,0;x_1)+I(0;1,0,0;x_1)-\zeta(2)I(0;1;x_1).
		\end{align*}
		Hence we have
		\begin{align*}
			&\eta^{\star\star}(1,1;1,1;{\mm{id}}^{-1};1,0)\\
			=&-\int_{(0,\infty)^2}^{} \frac{{\mm{Li}}_{1,1}^{\cy}(1-e^{t_1+t_2},1-e^{t_2})}{(1-e^{t_1+t_2})(1-e^{-t_2})}dt_1dt_2\\
			=&\int_{0<x_2<x_1<1}^{} \frac{{\mm{Li}}_{1,1}^{\cy}\left(\frac{1}{1-x_1^{-1}},\frac{1}{1-x_2^{-1}}\right)}{x_1x_2}dx_1dx_2+\int_{0<x_2<x_1<1}^{} \frac{{\mm{Li}}_{1,1}^{\cy}
				\left(\frac{1}{1-x_1^{-1}},\frac{1}{1-x_2^{-1}}\right)}{x_1(1-x_2)}dx_1dx_2\\
			=&\int_{0<x_2<x_1<1}^{} \frac{I(0;x_2^{-1},x_1^{-1};1)}{x_1x_2}dx_1dx_2+\int_{0<x_2<x_1<1}^{} \frac{I(0;x_2^{-1},x_1^{-1};1)}{x_1(1-x_2)}dx_1dx_2\\\\
			=&\int_{0<x_2<x_1<1}^{} \frac{I(0;x_1;x_2)I(0;1;x_1)+I(0;1,0;x_2)-I(0;1,x_1;x_2)}{x_1x_2}dx_1dx_2\\
			&\five-\int_{0<x_2<x_1<1}^{} \frac{I(0;x_1;x_2)I(0;1;x_1)+I(0;1,0;x_2)-I(0;1,x_1;x_2)}{x_1(x_2-1)}dx_1dx_2\\
			=&\int_{0}^{1} \frac{I(0;x_1,0;x_1)I(0;1;x_1)+I(0;1,0,0;x_1)-I(0;1,x_1,0;x_1)}{x_1}dx_1\\
			&\five-\int_{0}^{1} \frac{I(0;x_1,1;x_1)I(0;1;x_1)+I(0;1,0,1;x_1)-I(0;1,x_1,1;x_1)}{x_1}dx_1\\
			=&\int_{0}^{1} \frac{-2I(0;1,1,1;x_1)+2I(0;1,1,0;x_1)}{x_1}dx_1\\
			=&2\zeta(1,1,2)+2\zeta(1,3).
		\end{align*}
		Therefore, we obtain
		\[
		\zeta(1,1,2)+2\zeta(2,2)=2\zeta(1,3)+2\zeta(1,1,2).
		\]
	\end{example}

	\begin{example}\label{examplewt5}
		
		For $r=2, k_1=k_2=1, n_1=1, n_2=2, \sigma=(1,2)\in\mathfrak{S}_2$ and $a_1=a_2=1$, we have
		\begin{align*}
			I(0;x_1^{-1},x_2^{-1};1)&=I(0;1,x_1;x_2)-I(0;1,0;x_2)+I(0;1;x_2)I(0;1;x_1)-I(0;x_1;x_2)I(0;1;x_1),\\
			I(0;1,x_1,0,1;x_1)&=-I(1,1,0,1;x_1)+2I(1,0,0,1;x_1)-2I(0;1,1;x_1)\zeta(2)\\
			&\five+2I(1,0,1,0;x_1)-2I(1,0,1,1;x_1)-I(1,1,1,0;x_1),\\
			I(0;1,x_1,1,0;x_1)&=I(1,1,0,1;x_1)-3I(1,0,0,1;x_1)+2I(0;1,1;x_1)\zeta(2)\\
			&\five-I(1,0,1,0;x_1)+2I(1,0,1,1;x_1)+I(1,1,1,0;x_1),\\
			I(0;x_1,1,1,0;x_1)&=-2I(0;1,1,0,0;x_1)+2I(1,0,0,1;x_1)-I(0;1,1;x_1)\zeta(2)\\
			&\five-I(1,0,1,1;x_1)+I(1,1,1,0;x_1),\\
			I(0;x_1,1,0,1;x_1)&=+2I(1,1,0,1;x_1)-3I(1,0,0,1;x_1)+2I(1,1;x_1)\zeta(2)\\
			&\five-I(1,0,1,0;x_1)+2I(1,0,1,1;x_1)-I(1,1,1,0;x_1)
		\end{align*}
		and
		\begin{align*}
			&\eta^{\star}(1,1;1,2;\sigma;1,1)\\
			&=\int_{(0,\infty)^2}^{}t_2\frac{{\mm{Li}}_{1,1}^{\cy}(1-e^{t_2},1-e^{t_1+t_2})}{(1-e^{t_1+t_2})(1-e^{t_2})}dt_1dt_2\\
			&=-\int_{0<x_2<x_1<1}^{}\frac{I(0;1;x_2)I(0;x_1^{-1},x_2^{-1};1)}{x_1x_2}dx_1dx_2\\
			&=-2\int_{0<x_2<x_1<1}^{}\frac{I(0;1,1,x_1;x_2)}{x_1x_2}dx_1dx_2-\int_{0<x_2<x_1<1}^{}\frac{I(0;1,x_1,1;x_2)}{x_1x_2}dx_1dx_2\\
			&\five+2\int_{0<x_2<x_1<1}^{}\frac{I(0;1,1,0;x_2)}{x_1x_2}dx_1dx_2+\int_{0<x_2<x_1<1}^{}\frac{I(0;1,0,1;x_2)}{x_1x_2}dx_1dx_2\\
			&\five-2\int_{0<x_2<x_1<1}^{}\frac{I(0;1,1;x_2)I(0;1;x_1)}{x_1x_2}dx_1dx_2+\int_{0<x_2<x_1<1}^{}\frac{I(0;1,x_1;x_2)I(0;1;x_1)}{x_1x_2}dx_1dx_2\\
			&\five+\int_{0<x_2<x_1<1}^{}\frac{I(0;x_1,1;x_2)I(0;1;x_1)}{x_1x_2}dx_1dx_2\\
			&=-2\int_{0}^{1}\frac{I(0;1,1,x_1,0;x_1)}{x_1}dx_1-\int_{0}^{1}\frac{I(0;1,x_1,1,0;x_1)}{x_1}dx_1+\int_{0}^{1}
			\frac{I(0;x_1,1,0;x_1)I(0;1;x_1)}{x_1}dx_1\\
			&\five-2\int_{0}^{1}\frac{I(0;1,1,0;x_1)I(0;1;x_1)}{x_1}dx_1+\int_{0}^{1}\frac{I(0;1,x_1,0;x_1)I(0;1;x_1)}{x_1}dx_1+2\zeta(1,4)+\zeta(2,3)\\
			&=\int_{0}^{1}\frac{I(0;1,x_1,0,1;x_1)}{x_1}dx_1+\int_{0}^{1}\frac{I(0;1,x_1,1,0;x_1)}{x_1}dx_1+2\int_{0}^{1}\frac{I(0;x_1,1,1,0;x_1)}{x_1}dx_1\\
			&\five+\int_{0}^{1}\frac{I(0;x_1,1,0,1;x_1)}{x_1}dx_1+2\zeta(1,4)+\zeta(2,3)+6\zeta(1,1,3)+2\zeta(1,2,2)\\
			&=-2\zeta(1,4)+\zeta(2,3)+4\zeta(1,1,3).
		\end{align*}
		On the other hand, we have
		\begin{align*}
			I(0;1,x_1^{-1},x_2^{-1};1)&=I(0;1,1,x_1;x_2)-I(0;1,0,x_1;x_2)-I(0;1,1,0;x_2)-I(0;1,0,0;x_2)\\
			&\five+I(0;1;x_2)I(0;1,1;x_1)-I(0;1;x_2)I(0;1,0;x_1)\\
			&\five-I(0;x_1;x_2)I(0;1,1;x_1)+I(0;x_1;x_2)I(0;1,0;x_1),\\
			I(0;x_1,0,1,1;x_1)&=-I(0;1,1,1,0;x_1)+I(0;1,1,0,0;x_1)-I(0;1,1,0,1;x_1)\\
			&\five+I(0;1,0,0,1;x_1)-I(0;1,1;x_1)\zeta(2),\\
			I(0;x_1,0,1,0;x_1)&=-I(0;1,0,1,0;x_1)+3I(0;1,0,0,0;x_1)-2I(0;1,1,0,0;x_1)-2I(0;1,0;x_1)\zeta(2),\\
			I(0;x_1,1,0,0;x_1)&=I(0;1,0,1,0;x_1)-3I(0;1,0,0,0;x_1)+I(0;1,1,0,0;x_1)\\
			&\five+I(0;1,0,0,1;x_1)+I(0;1;x_1)\zeta(3)+I(0;1,0;x_1)\zeta(2),\\
			I(0;1,x_1,0,0;x_1)&=-I(0;1,0,0,1;x_1)+I(0;1,0,0,0;x_1)-I(0;1;x_1)\zeta(3)
		\end{align*}
		and
		\begin{align*}
			&\eta^{\star\star}(1,2;1,1;\sigma^{-1};1,1)\\
			&=\int_{(0,\infty)^2}^{}\frac{{\mm{Li}}_{1,2}^{\cy}(1-e^{t_2},1-e^{t_1+t_2})}{(1-e^{t_1+t_2})(1-e^{t_2})}dt_1dt_2\\
			&=-\int_{0<x_2<x_1<1}^{}\frac{I(0;1,x_1^{-1},x_2^{-1};1)}{x_1x_2}dx_1dx_2\\
			&=\zeta(1,2,2)+\zeta(2,3)+3\zeta(1,1,3)+4\zeta(1,4)-\zeta(3,2).
		\end{align*}
		Hence we obtain
		\begin{align*}
			&-2\zeta(1,4)+\zeta(2,3)+4\zeta(1,1,3)=\zeta(1,2,2)+\zeta(2,3)+3\zeta(1,1,3)+4\zeta(1,4)-\zeta(3,2)\\
			&\rightarrow 6\zeta(1,4)-\zeta(3,2)-\zeta(1,1,3)+\zeta(1,2,2)=0.
		\end{align*}
	\end{example}
	
	\begin{remark}
		In the same way, for the identity permutation $\mm{id}$ and $\sigma=(1,2)$, we have
		\begin{align*}
			&\eta^{\star}(1;4;\mm{id};1)=\eta^{\star\star}(4;1;\mm{id};1)~(\Leftrightarrow \eta(1;4)=\eta(4;1))\\
			&\rightarrow \zeta(5)+\zeta(1,4)+\zeta(2,3)+\zeta(3,2)+\zeta(1,1,3)+\zeta(1,2,2)+\zeta(2,1,2)-3\zeta(1,1,1,2)=0,\\
			&\eta^{\star}(2;3;\mm{id};1)=\eta^{\star\star}(3;2;\mm{id};1)~(\Leftrightarrow \eta(2;3)=\eta(3;2))\\
			&\rightarrow 2\zeta(1,4)+\zeta(2,3)+\zeta(3,2)+\zeta(1,2,2)+\zeta(2,1,2)-2\zeta(1,1,1,2)=0,\\
			&\eta^{\star}(1,1;1,2;\mm{id};1,1)=\eta^{\star\star}(1,2;1,1;\mm{id};1,1)~(\Leftrightarrow \eta(1,1;1,2)=\eta(1,2;1,1))\\
			&\rightarrow 2\zeta(1,4)-\zeta(3,2)+3\zeta(1,1,3)+\zeta(1,2,2)=0,\\
			&\eta^{\star}(1,1;2,1;\mm{id};1,1)=\eta^{\star\star}(2,1;1,1;\mm{id};1,1)~(\Leftrightarrow \eta(1,1;2,1)=\eta(2,1;1,1))\\
			&\rightarrow -2\zeta(1,4)-\zeta(2,3)-3\zeta(1,1,3)+\zeta(2,1,2)=0,\\
			&\eta^{\star}(1,1;2,1;\sigma;1,1)=\eta^{\star\star}(2,1;1,1;\sigma^{-1};1,1)\\
			&\rightarrow 4\zeta(1,4)-2\zeta(2,3)-\zeta(3,2)+\zeta(1,1,3)+3\zeta(1,2,2)=0.
		\end{align*}
		Combining these with Example \ref{examplewt5}, we obtain
		\begin{align*}
			\zeta(5)&=\zeta(1,1,1,2),\\
			4\zeta(1,4)&=2\zeta(2,1,2)-\zeta(1,1,1,2),\\
			4\zeta(2,3)&=-6\zeta(2,1,2)+5\zeta(1,1,1,2),\\
			\zeta(3,2)&=\zeta(2,1,2),\\
			4\zeta(1,1,3)&=2\zeta(2,1,2)-\zeta(1,1,1,2),\\
			4\zeta(1,2,2)&=-6\zeta(2,1,2)+5\zeta(1,1,1,2).
		\end{align*}
	\end{remark}
	
	\begin{example}
		
		For $\mm{id}$ and $\sigma=(1,2)$, by the relations
		\begin{enumerate}
			\item $\eta^{\star}(1;5;\mm{id};1)=\eta^{\star\star}(5;1;\mm{id}^{-1};1)~(\Leftrightarrow \eta(1;5)=\eta(5;1) ),$
			\item $\eta^{\star}(2;4;\mm{id};1)=\eta^{\star\star}(4;2;\mm{id}^{-1};1)~(\Leftrightarrow \eta(2;4)=\eta(4;2) ),$
			\item $\eta^{\star}(1,1;2,2;\mm{id};1,1)=\eta^{\star\star}(2,2;1,1;\mm{id}^{-1};1,1)~(\Leftrightarrow \eta(1,1;2,2)=\eta(2,2;1,1) ),$
			\item $\eta^{\star}(1,2;2,1;\mm{id};1,1)=\eta^{\star\star}(2,1;1,2;\mm{id}^{-1};1,1)~(\Leftrightarrow \eta(1,2;2,1)=\eta(2,1;1,2) ),$
			\item $\eta^{\star}(1,1;1,3;\mm{id};1,1)=\eta^{\star\star}(1,3;1,1;\mm{id}^{-1};1,1)~(\Leftrightarrow \eta(1,1;1,3)=\eta(1,3;1,1) ),$
			\item $\eta^{\star}(1,1;3,1;\mm{id};1,1)=\eta^{\star\star}(3,1;1,1;\mm{id}^{-1};1,1)~(\Leftrightarrow \eta(1,1;3,1)=\eta(3,1;1,1) ),$
			\item $\eta^{\star}(1,1;2,2;\sigma;1,1)=\eta^{\star\star}(2,2;1,1;\sigma^{-1};1,1),$
			\item $\eta^{\star}(1,2;2,1;\sigma;1,1)=\eta^{\star\star}(2,1;1,2;\sigma^{-1};1,1),$
			\item $\eta^{\star}(1,1;1,3;\sigma;1,1)=\eta^{\star\star}(1,3;1,1;\sigma^{-1};1,1),$
			\item $\eta^{\star}(1,1;3,1;\sigma;1,1)=\eta^{\star\star}(3,1;1,1;\sigma^{-1};1,1),$
			\item $\eta^{\star}(1,1,1;1,1,1;\mm{id};1,0,1)=\eta^{\star\star}(1,1,1;1,1,1;\mm{id}^{-1};1,0,1),$
			\item $\eta^{\star}(1,1,1;1,1,1;\mm{id};1,1,0)=\eta^{\star\star}(1,1,1;1,1,1;\mm{id}^{-1};1,1,0),$
			\item $\eta^{\star}(1,2;1,2;\mm{id};1,0)=\eta^{\star\star}(1,2;1,2;\mm{id}^{-1};1,0),$
			\item $\eta^{\star}(2,1;2,1;\mm{id};1,0)=\eta^{\star\star}(2,1;2,1;\mm{id}^{-1};1,0),$
		\end{enumerate}
		we obtain
		
		$\begin{pmatrix}
			1 & 1 & 1 & 1 & 1 & 1 & 1 & 1 & 1 & 1 & 1 & 1 & 1 & 1 & 1 & -4 \\
			0 & 2 & 1 & 1 & 1 & 3 & 3 & 3 & 2 & 2 & 2 & 0 & 1 & 2 & 2 & -5 \\
			0 & 0 & 0 & 1 & 0 & 0 & 1 & -2 & 1 & 0 & 1 & 3 & -3 & -3 & 0 & 0 \\
			0 & 6 & -1 & -1 & 0 & 7 & 7 & 2 & 2 & 0 & -2 & 9 & 3 & 0 & 0 & 0 \\
			0 & 0 & 1 & 1 & -1 & 4 & 1 & -1 & 1 & -1 & 0 & 7 & 2 & 0 & 0 & 0 \\
			0 & 0 & 2 & 1 & 0 & 4 & 1 & 2 & 1 & 1 & 0 & 4 & 3 & 0 & -2 & 0 \\
			0 & -6 & 1 & 2 & 0 & -1 & -2 & -4 & 1 & -1 & 2 & -1 & -6 & -2 & 0 & 0 \\
			0 & 4 & 0 & -1 & 0 & 11 & 3 & 2 & -2 & 0 & -2 & 1 & 6 & 2 & 0 & 0 \\
			0 & 6 & 2 & 0 & -1 & 9 & 4 & 3 & 1 & 0 & -1 & -3 & 2 & 1 & 0 & 0 \\
			0 & 2 & -1 & 0 & 1 & -4 & 2 & 1 & 2 & 1 & -1 & -2 & -2 & -1 & 0 & 0 \\
			0 & -2 & 1 & 0 & 0 & -2 & 0 & -2 & 2 & -2 & 0 & 0 & -3 & 0 & 1 & 0 \\
			0 & 4 & -4 & 1 & 0 & 3 & 3 & 2 & -2 & -2 & 0 & 3 & 1 & 0 & 0 & 0 \\
			0 & 4 & 0 & -1 & 0 & 8 & 3 & 4 & -1 & -2 & -3 & 7 & 7 & 1 & -5 & 4 \\
			0 & -2 & 1 & 0 & 0 & 1 & -2 & 0 & -1 & 0 & -1 & 0 & 3 & -1 & -2 & 2
		\end{pmatrix}
		\begin{pmatrix}
			\zeta(6) \\
			\zeta(1,5) \\
			\zeta(2,4) \\
			\zeta(3,3) \\
			\zeta(4,2) \\
			\zeta(1,1,4) \\
			\zeta(1,2,3) \\
			\zeta(1,3,2) \\
			\zeta(2,1,3) \\
			\zeta(2,2,2) \\
			\zeta(3,1,2) \\
			\zeta(1,1,1,3) \\
			\zeta(1,1,2,2) \\
			\zeta(1,2,1,2) \\
			\zeta(2,1,1,2) \\
			\zeta(1,1,1,1,2) 
		\end{pmatrix}\\
		=
		\begin{matrix}
			\boldsymbol{0}
		\end{matrix}
		$.
	\end{example}
	
	Hence we have
	\begin{align*}
		&\zeta(1,5)-2\zeta(2,4)-\zeta(3,3)+2\zeta(4,2)+2\zeta(1,1,4)-6\zeta(1,2,3)+3\zeta(1,3,2)+3\zeta(2,1,3)-2\zeta(3,1,2)\\
		&+\zeta(1,1,1,3)-2\zeta(1,1,2,2)-\zeta(1,2,1,2)+2\zeta(2,1,1,2)=0,\\
		&48\zeta(6)-14\zeta(1,5)+36\zeta(2,4)+26\zeta(3,3)-31\zeta(1,1,4)+97\zeta(1,2,3)-44\zeta(1,3,2)-44\zeta(2,1,3) \\
		&+9\zeta(2,2,2)+61\zeta(3,1,2)-14\zeta(1,1,1,3)+36\zeta(1,1,2,2)+26\zeta(1,2,1,2)+48\zeta(1,1,1,1,2) =0,
	\end{align*}
	which implies
	
	$
	\begin{pmatrix}
		1 & 0 & 0 & 0 & 0 & 0 & 0 & 0 & 0 & 0 & 0 & 0 & 0 & 0 & 0 & -1 \\
		0 & 24 & 0 & 0 & 0 & 0 & 0 & 0 & 0 & 0 & 0 & 0 & 0 & 0 & -12 & 7 \\
		0 & 0 & -4 & 0 & 0 & 0 & 0 & 0 & 0 & 0 & 0 & 0 & 0 & 0 & -4 & 3 \\
		0 & 0 & 0 & 24 & 0 & 0 & 0 & 0 & 0 & 0 & 0 & 0 & 0 & 0 & 12 & -13 \\
		0 & 0 & 0 & 0 & -1 & 0 & 0 & 0 & 0 & 0 & 0 & 0 & 0 & 0 & 1 & 0 \\
		0 & 0 & 0 & 0 & 0 & 48 & 0 & 0 & 0 & 0 & 0 & 0 & 0 & 0 & -48 & 31 \\
		0 & 0 & 0 & 0 & 0 & 0 & 48 & 0 & 0 & 0 & 0 & 0 & 0 & 0 & 144 & -97\\
		0 & 0 & 0 & 0 & 0 & 0 & 0 & 12 & 0 & 0 & 0 & 0 & 0 & 0 & -18 & 11 \\
		0 & 0 & 0 & 0 & 0 & 0 & 0 & 0 & 12 & 0 & 0 & 0 & 0 & 0 & -18 & 11 \\
		0 & 0 & 0 & 0 & 0 & 0 & 0 & 0 & 0 & 16 & 0 & 0 & 0 & 0 & 0 & -3 \\
		0 & 0 & 0 & 0 & 0 & 0 & 0 & 0 & 0 & 0 & 48 & 0 & 0 & 0 & 48 & -61 \\
		0 & 0 & 0 & 0 & 0 & 0 & 0 & 0 & 0 & 0 & 0 & 24 & 0 & 0 & -12 & 7 \\
		0 & 0 & 0 & 0 & 0 & 0 & 0 & 0 & 0 & 0 & 0 & 0 & -4 & 0 & -4 & 3 \\
		0 & 0 & 0 & 0 & 0 & 0 & 0 & 0 & 0 & 0 & 0 & 0 & 0 & 24 & 12 & -13 
	\end{pmatrix}
	\begin{pmatrix}
		\zeta(6) \\
		\zeta(1,5) \\
		\zeta(2,4) \\
		\zeta(3,3) \\
		\zeta(4,2) \\
		\zeta(1,1,4) \\
		\zeta(1,2,3) \\
		\zeta(1,3,2) \\
		\zeta(2,1,3) \\
		\zeta(2,2,2) \\
		\zeta(3,1,2) \\
		\zeta(1,1,1,3) \\
		\zeta(1,1,2,2) \\
		\zeta(1,2,1,2) \\
		\zeta(2,1,1,2) \\
		\zeta(1,1,1,1,2) 
	\end{pmatrix}
	=
	\begin{matrix}
		\boldsymbol{0}.
	\end{matrix}
	$
	
	\section*{Acknowledgemet}
	The author would like to thank my supervisor Professor Hirofumi Tsumura for his kind advice and helpful comments. Also, the author would like to thank Professor Yasushi Komori for valuable comments. This work was supported by JST, the establishment of university fellowships towards the creation of science technology innovation, Grant Number JPMJFS2139.
	
	\begin{bibdiv}
		\begin{biblist}
			\bib{AK1}{article}{
				author={T. Arakawa},author={M. Kaneko},
				title={Multiple zeta values, poly-Bernoulli numbers, and related zeta functions},
				journal={Nagoya Math. J.},
				volume={153},
				date={1999},
				number={},
				pages={189--209},
				issn={},
			}
			\bib{AO1}{article}{
				AUTHOR = {T. Aoki}, author={Y. Ohno},
				TITLE = {Sum relations for multiple zeta values and connection formulas
					for the {G}auss hypergeometric functions},
				JOURNAL = {Publ. Res. Inst. Math. Sci.},
				VOLUME = {41},
				YEAR = {2005},
				NUMBER = {2},
				PAGES = {329--337},
			}
			\bib{BS}{article}{
				author={Y. Baba},author={M. Nakasuji},author={M. Sakata}
				title={Multi-indexed poly-Bernoulli numbers},
				journal={arXiv:2211.14549},
				volume={},
				date={},
				number={},
				pages={},
				issn={},
			}
			\bib{Bro}{article}{
				AUTHOR = {Brown, Francis C. S.},
				TITLE = {Multiple zeta values and periods of moduli spaces
					{$\overline{\germ M}_{0,n}$}},
				JOURNAL = {Ann. Sci. \'{E}c. Norm. Sup\'{e}r. (4)},
				VOLUME = {42},
				YEAR = {2009},
				NUMBER = {3},
				PAGES = {371--489},
			}
			\bib{Go}{article}{
				AUTHOR = {A. B. Goncharov},
				TITLE = {Multiple polylogarithms and mixed tate motives},
				JOURNAL = {preprint},
				VOLUME = {},
				YEAR = {2001},
				NUMBER = {},
				PAGES = {arXiv:math/0103059v4},
			}
			\bib{HIST}{article}{
				AUTHOR = {M. Hirose},author={K. Iwaki},author={N. Sato},author={K. Tasaka},
				TITLE = {Duality/sum formulas for iterated integrals and their
					application to multiple zeta values},
				JOURNAL = {J. Number Theory},
				VOLUME = {195},
				YEAR = {2019},
				PAGES = {72--83},
			}
			\bib{It1}{thesis}{
				author={K. Ito},
				title={On a multi-variable Arakawa-Kaneko zeta function for non-positive or positive indices},
				school={Tohoku University, doctoral thesis},
				year={2020},
			}
			\bib{It2}{article}{
				AUTHOR = {K. Ito},
				TITLE = {The multi-variable {A}rakawa-{K}aneko zeta function for
					non-positive indices and its values at non-positive integers},
				JOURNAL = {Comment. Math. Univ. St. Pauli},
				FJOURNAL = {Commentarii Mathematici Universitatis Sancti Pauli},
				VOLUME = {68},
				YEAR = {2020},
				PAGES = {69--82},
			}
			\bib{It3}{article}{
				AUTHOR = {K. Ito},
				TITLE = {Analytic continuation of multi-variable {A}rakawa-{K}aneko
					zeta function for positive indices and its values at positive
					integers},
				JOURNAL = {Funct. Approx. Comment. Math.},
				VOLUME = {65},
				YEAR = {2021},
				NUMBER = {2},
				PAGES = {237--254},
			}
			\bib{KT1}{article}{
				AUTHOR = {M. Kaneko},author={H. Tsumura},
				TITLE = {Multi-poly-{B}ernoulli numbers and related zeta functions},
				JOURNAL = {Nagoya Math. J.},
				FJOURNAL = {Nagoya Mathematical Journal},
				VOLUME = {232},
				YEAR = {2018},
				PAGES = {19--54},
			}
			\bib{KO}{article}{
				AUTHOR = {N. Kawasaki},author={Y. Ohno},
				TITLE = {Combinatorial proofs of identities for special values of
					{A}rakawa-{K}aneko multiple zeta functions},
				JOURNAL = {Kyushu J. Math.},
				VOLUME = {72},
				YEAR = {2018},
				NUMBER = {1},
				PAGES = {215--222},
			}
			\bib{Kom1}{article}{
				AUTHOR = {Y. Komori},
				TITLE = {An integral representation of multiple {H}urwitz-{L}erch zeta
					functions and generalized multiple {B}ernoulli numbers},
				JOURNAL = {Q. J. Math.},
				VOLUME = {61},
				YEAR = {2010},
				NUMBER = {4},
				PAGES = {437--496},
			}
			\bib{KoT}{article}{
				AUTHOR = {Y. Komori},author={H. Tsumura},
				TITLE = {On {A}rakawa-{K}aneko zeta-functions associated with
					{$GL_2(\Bbb C)$} and their functional relations},
				JOURNAL = {J. Math. Soc. Japan},
				VOLUME = {70},
				YEAR = {2018},
				NUMBER = {1},
				PAGES = {179--213},
			}
			\bib{Lap}{article}{
				AUTHOR = {J. A. Lappo-Danilevsky},
				TITLE = {Th\'eorie algorithmique des corps de Riemann},
				JOURNAL = {Rec. Math. Moscou},
				VOLUME = {34},
				YEAR = {1927},
				NUMBER = {6},
				PAGES = {113--146},
			}
			\bib{Pan}{thesis}{
				AUTHOR = {E. Panzer},
				TITLE = {Feynman integrals and hyperlogarithms},
				school={Humboldt-Universität zu Berlin}
				YEAR = {2014},
			}
			\bib{Ya}{article}{
				AUTHOR = {S. Yamamoto},
				TITLE = {Multiple zeta functions of {K}aneko-{T}sumura type and their
					values at positive integers},
				JOURNAL = {Kyushu J. Math.},
				VOLUME = {76},
				YEAR = {2022},
				NUMBER = {2},
				PAGES = {497--509},
			}
		\end{biblist}
	\end{bibdiv}
	
	\kg
	
	{\bf Address:} Department of Mathematical Sciences, Tokyo Metropolitan University, 1-1 Minami-osawa, Hachioji City, Tokyo, 192-0364 Japan.
	
	{\bf E-mail:} nishibiro-kyousuke@ed.tmu.ac.jp

\end{document}